\DeclareMathAlphabet{\mathpzc}{OT1}{pzc}{m}{it}
\numberwithin{equation}{section}
\begin{document}
 
\title{{\bf  Algebraic functions and class number formulas}}         % Enter your title between curly braces
\author{Sushmanth J. Akkarapakam and Patrick Morton}        % Enter your name between curly braces
\date{Nov. 1, 2025}          % Enter your date or \today between curly braces
\maketitle

\begin{abstract}  A class number formula is proved for extended ring class fields $L_{\mathcal{O},9}$ over imaginary quadratic fields $K_d = \mathbb{Q}(\sqrt{-d})$, in which the prime $p = 3$ splits, by determining the fields generated by the periodic points of a well-chosen algebraic function.  The number of periodic points of a given period $n \ge 2$ for this algebraic function equals six times the sum of class numbers of imaginary quadratic orders $\textsf{R}_{-d}$, for which the Artin symbol for a prime ideal divisor $\wp_3$ in $K_d$ of $3$ has order $n$ in the Galois group of $F_d/K_d$, where $F_d$ is the inertia field of $\wp_3$ in $L_{\mathcal{O},9}/K_d$.
\end{abstract}

\section{Introduction.}

In the well-known papers \cite{d1, d2}, Deuring used his theory of supersingular elliptic curves to derive an interesting class number formula, which connects the class numbers of different imaginary quadratic orders, and which can be stated as follows.  Let $\Omega_f$ be the ring class field of ring-conductor $f$ over the imaginary quadratic field $K_d = K = \mathbb{Q}(\sqrt{-d})$, where $-d = d_K f^2$ and $d_K$ is the discriminant of $K$.  For a given prime $p$ let $\mathfrak{D}_n^{(p)}$ denote the set of negative discriminants
$$\mathfrak{D}_n^{(p)} = \left\{-d = d_K f^2 \ | \ \left(\frac{-d}{p}\right) = +1 \ \wedge \ ord(\tau_\frak{p}) = n, \tau_\frak{p} = \left(\frac{\Omega_f/K}{\frak{p}}\right)\right\}.$$
Here, $\frak{p}$ is a first degree prime ideal divisor of $p$ in the ring of integers $R_K$ of $K$.  Then if all supersingular invariants in characteristic $p$ lie in the prime field $\mathbb{F}_p$, Deuring's formulas are equivalent to the formulas ($\mu(n)$ denotes the M\"obius $\mu$-function):
\begin{equation}
\sum_{-d \in \mathfrak{D}_{n}^{(p)}}{h(-d)} = \sum_{k \mid n}{\mu(n/k)p^k}, \ \ n>1.
\label{eqn:1.1}
\end{equation}

In two previous papers, an extension of Deuring's formulas was proved for the primes $p = 2, 5$.  To state these formulas, we recall the following from \cite{ch,co, so}.
If $\mathcal{O} = \textsf{R}_{-d}$ is the order of discriminant $-d$ in $R_K$, then $L_{\mathcal{O},m}$ denotes the {\it extended} ring class field over $K$\footnote{Cox's term in \cite{co}. This could also be called an {\it extended ray class field}, since it extends the ray class field $\Sigma_m$ by the ring class field $\Omega_{fm}$.}, whose corresponding ideal group in $K$ is
$$P_{\mathcal{O},m} = \{(\alpha) = \alpha R_K \ | \ \alpha \equiv a \ \textrm{mod} fm \ \textrm{in} \ R_K, \ a \in \mathbb{Z}, (a, f) = 1, a \equiv 1 \ \textrm{mod} \ m\},$$
in which the generators of principal ideals lie in the order $\mathcal{O}$.  See \cite[p. 313]{co}.  The field $L_{\mathcal{O},m}$ is a normal extension of the ring class field $\Omega_f = L_{\mathcal{O},1}$.  In general, the fact that $P_{\mathcal{O},m}$ is the intersection
$$P_{\mathcal{O},m} =  \{(\alpha) \ | \ \alpha \equiv a \ \textrm{mod} \ fm, a \in \mathbb{Z}, (a, fm) = 1\} \cap \{(\alpha) \ | \ \alpha \equiv 1 \ \textrm{mod} \ m\}$$
implies that
$$L_{\mathcal{O},m} = \Sigma_m \Omega_{fm},$$
where $\Sigma_m$ is the ray class field of conductor $(m)$.
(See \cite[Satz 11, p. 28]{h0}, \cite[p. 136]{chil}.)  If $K \neq \mathbb{Q}(\sqrt{-3})$ or $\mathbb{Q}(\sqrt{-4})$ and $(f,m) = 1$, then $\Omega_{fm} = \Omega_f \Omega_m$ (see \cite[Satz 3]{h}), so that
$$L_{\mathcal{O},m} = \Sigma_m \Omega_{fm} = \Sigma_m  \Omega_m \Omega_f = \Sigma_m \Omega_f.$$

Now we can state the extensions of Deuring's formulas. \begin{enumerate}[(i)]
\item ($p = 5$, \cite{m2}) If $\mathfrak{D}_{n,5}$ is the finite set of negative discriminants $-d \equiv 1, 4$ (mod $5$) for which $\tau_5 = \left(\frac{F_d/K_d}{\wp_5}\right)$ has order $n$ in $\textrm{Gal}(F_d/K_d)$, where $-d = d_K f^2$, $(5) = \wp_5 \wp_5'$ in $K_d$, and $F_d$ is the inertia field for the prime ideal $\wp_5$ in the extended ring class field $L_{\mathcal{O},5} = \Sigma_5 \Omega_{5f}$ over $K_d$, then
\begin{equation}
\sum_{-d \in \mathfrak{D}_{n,5}}{h(-d)} = \frac{1}{2} \sum_{k \mid n}{\mu(n/k)5^k}, \ \ n>1.
\label{eqn:1.2}
\end{equation}

\item ($p = 2$, \cite{am1}) If $\mathfrak{D}_{n,2}$ is the finite set of negative discriminants $-d \equiv 1$ (mod $8$) for which the Frobenius automorphism $\tau_2 = \left(\frac{\Sigma_{\wp_2'^3}\Omega_f/K_d}{\wp_2}\right)$  has order $n$ in $\textrm{Gal}(\Sigma_{\wp_2'^3}\Omega_f/K_d)$, where $(2) = \wp_2 \wp_2'$ in $K_d$ and $\Sigma_{\wp_2'^3}\Omega_f$ is the inertia field for $\wp_2$ in $L_{\mathcal{O},8} = \Sigma_8 \Omega_f$, then
\begin{equation}
\sum_{-d \in \mathfrak{D}_{n,2}}{h(-d)} = \frac{1}{2} \sum_{k \mid n}{\mu(n/k) 2^k}, \ \ n > 1.
\label{eqn:1.3}
\end{equation}
\end{enumerate}

In this paper we will prove a similar extension for the prime $p = 3$.  Let $K = K_d = \mathbb{Q}(\sqrt{-d})$ be an imaginary quadratic field whose discriminant $d_K$ is related to the integer $d$ by
$$-d = d_K f^2 \equiv 1 \ (\textrm{mod} \ 3).$$
Let $(3) = \wp_3 \wp_3'$ be the ideal factorization of $(3)$ in the ring of integers $R_K$ of $K$.  Further, let $\Sigma_{\wp_3'^2}$ be the ray class field of conductor
$$\mathfrak{f} = \wp_3'^2$$
over $K$, and let $\Omega_f$ denote the ring class field of conductor $f$ over $K$.  We will prove the following.  As above, $\mu(n)$ denotes the M\"obius $\mu$-function.

\newtheorem{thm}{Theorem}

\begin{thm}
If $\mathfrak{D}_{n,3}$ is the set of discriminants $-d \equiv 1$ (mod $3$) for which $\tau_3 = \left(\frac{\Sigma_{\wp_3'^2}\Omega_f/K_d}{\wp_3}\right)$ has order $n$ in $\textrm{Gal}(\Sigma_{\wp_3'^2}\Omega_f/K_d)$, where $-d = d_K f^2$ and $\Sigma_{\wp_3'^2}\Omega_f$ is the inertia field for $\wp_3$ in $L_{\mathcal{O},9} = \Sigma_9 \Omega_f$, then
\begin{equation}
\sum_{-d \in \mathfrak{D}_{n,3}}{h(-d)} = \frac{1}{3} \sum_{k \mid n}{\mu(n/k)3^k}, \ \ n>1.
\label{eqn:1.4}
\end{equation}
\label{thm:1}
\end{thm}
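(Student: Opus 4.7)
The plan is to extend the algebraic-function method used for $p = 5$ in \cite{m2} and for $p = 2$ in \cite{am1}. The abstract promises a specific algebraic function $T(z)$ whose periodic points generate the extended ring class fields $L_{\mathcal{O},9}$; the proof of \eqref{eqn:1.4} will then proceed by counting the primitive period-$n$ points of $T$ in two ways -- algebraically via M\"obius inversion on the fixed-point equation, and arithmetically via a decomposition into Galois orbits of class-number size -- and comparing the two counts.

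First I would construct $T$ as an algebraic correspondence of (bi)degree $3$ naturally associated to the prime $3$, most plausibly derived from the modular polynomial $\Phi_3(X,Y) = 0$ on a suitable Hauptmodul quotient that sees the level-$9$ ray-class structure. The iteration $T^n$ then represents the $n$-fold composition of $3$-isogenies, which at a CM modulus with endomorphism ring $\mathcal{O} = \textsf{R}_{-d}$ realizes the $n$-fold Artin symbol $\tau_3^n$ on $\Sigma_{\wp_3'^2}\Omega_f$. Hence a CM point is primitively $n$-periodic for $T$ if and only if $-d \in \mathfrak{D}_{n,3}$. The core Galois-theoretic claim, to be established along the way, is that the field generated over $K_d$ by such a periodic point is exactly $L_{\mathcal{O},9} = \Sigma_9 \Omega_f$, and that $\Sigma_{\wp_3'^2}\Omega_f$ is its inertia field at $\wp_3$, as asserted in the statement.

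The double count then proceeds as follows. On the algebraic side, the equation $T^n(z) = z$ has $2 \cdot 3^n$ solutions when counted across both branches of the correspondence, so M\"obius inversion yields $2 \sum_{k \mid n} \mu(n/k) 3^k$ primitive $n$-periodic points. On the arithmetic side, each order $\mathcal{O} = \textsf{R}_{-d}$ with $-d \in \mathfrak{D}_{n,3}$ contributes $6 \, h(-d)$ periodic points: a factor $h(-d)$ coming from $\mathrm{Gal}(\Omega_f/K_d)$, and a factor $6$ arising from the degree $[\Sigma_{\wp_3'^2}\Omega_f : \Omega_f]$, which reflects $|(R_K/\wp_3^2)^\times|$ modded out by the images of the units $\{\pm 1\}$ of $R_K$. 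Equating the two totals gives $2 \sum_{k \mid n} \mu(n/k) 3^k = 6 \sum_{-d \in \mathfrak{D}_{n,3}} h(-d)$, which rearranges to \eqref{eqn:1.4}.

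The principal obstacle lies in the Galois-theoretic identification, which I would split into three tasks. First, one must rule out non-CM primitive periodic points; as in \cite{m2} and \cite{am1}, this should follow from a reduction argument modulo an auxiliary prime $\ell$ of good ordinary reduction, showing that any non-CM periodic point would force a contradiction with the supersingular structure in characteristic $3$. Second, one must prove that the field $K_d(z_0)$ generated by a periodic point $z_0$ coincides with $L_{\mathcal{O},9}$ and that its inertia subfield at $\wp_3$ is precisely $\Sigma_{\wp_3'^2}\Omega_f$, a computation that requires a careful analysis of the ray class group modulo $9 R_K = \wp_3^2 \wp_3'^2$ and the ramification of $\wp_3$ in $\Sigma_9/K_d$. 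Third, one must verify the contribution $6 h(-d)$ per order, by tracking the action of $\mathrm{Gal}(L_{\mathcal{O},9}/K_d)$ on the periodic orbits together with the unit-group corrections. The hypothesis $-d \equiv 1 \pmod 3$ conveniently excludes the exceptional case $K_d = \mathbb{Q}(\sqrt{-3})$, but the presence of cube roots of unity in the level-$9$ structure will still demand delicate bookkeeping.
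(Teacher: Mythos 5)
Your overall strategy---count the primitive period-$n$ points of a degree-$3$ algebraic correspondence attached to the prime $3$ in two ways, once by M\"obius inversion and once by Galois orbits of size $6h(-d)$---is indeed the skeleton of the paper's argument. But as written the proposal has three genuine gaps, two of which would make the count come out wrong.

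First, the field identification is incorrect, and inconsistently so. You assert that a primitive periodic point $z_0$ generates $K_d(z_0) = L_{\mathcal{O},9} = \Sigma_9\Omega_f$, while simultaneously crediting each order with only $6h(-d)$ periodic points. Since $[\Sigma_9\Omega_f:\mathbb{Q}] = h(-d)\,\varphi_K((9)) = 36h(-d)$, a generator of $L_{\mathcal{O},9}$ has $36h(-d)$ conjugates, all of which would be periodic points, and your totals would disagree by a factor of $6$. What is actually true (and what the paper proves via the Deuring normal form, the Weber function, and the cubic Fermat solutions of \cite{m1}) is that $\mathbb{Q}(z_0) = \Sigma_{\wp_3'^2}\Omega_f$, the inertia field itself, of degree $6h(-d)$ over $\mathbb{Q}$; the factor $6$ decomposes as $2\cdot 3 = [K_d:\mathbb{Q}]\cdot[\Sigma_{\wp_3'^2}\Omega_f:\Omega_f]$, not as $[\Sigma_{\wp_3'^2}\Omega_f:\Omega_f] = \varphi_K(\wp_3'^2)/2 = 3$ inflated to $6$ as in your third paragraph. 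Second, your algebraic count silently assumes that the $\approx 2\cdot 3^n$ solutions of the period-$n$ equation are distinct. This is not automatic for an algebraic correspondence and is a substantial piece of the proof: the paper establishes it $3$-adically, first producing $N = \frac{1}{n}\sum_{k\mid n}\mu(n/k)3^k$ simple degree-$n$ factors from the congruence $R_n(x)\equiv -(x^{3^n}-x)(x+1)^{3^n-1}\ (\mathrm{mod}\ 3)$ via Hensel's lemma, and then doubling the orbit count using the involution $A(x) = \frac{\gamma x+1-\gamma}{x-\gamma}$, which carries the unit orbits to orbits congruent to $-1$ modulo a prime above $3$; only then does $\deg R_n = 2\cdot 3^n - 1$ force all roots to be simple. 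Without a separability argument your M\"obius-inverted count is merely an upper bound and the identity fails.

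Third, the step ``primitively $n$-periodic $\iff$ $\mathrm{ord}(\tau_3) = n$'' needs more than the heuristic that iterating $3$-isogenies realizes the Artin symbol. One direction (period divides $\mathrm{ord}(\tau_3)$) is easy, but to get the \emph{minimal} period equal to the \emph{order} you must know that the correspondence is single-valued at the relevant completion, i.e., that for $z$ in the domain there is exactly one branch value in the maximal unramified extension $\textsf{K}_3$ of $\mathbb{Q}_3$ (the paper's Lemma~\ref{lem:1}, proved by showing the discriminant $-3(z^3-3z^2+1)^2(z^3-6z^2+3z+1)^2$ is not a square in $\textsf{K}_3$); otherwise a point could close up into a shorter cycle along a different branch. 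Your proposed device for excluding non-CM periodic points (reduction at an auxiliary ordinary prime against supersingularity in characteristic $3$) is also not how this is done: the paper transports a periodic point of $f(z,w)$ to a periodic point $\alpha = \frac{\xi^3-3\xi^2+1}{\xi(\xi-1)}$ of the companion correspondence $g(x,y)=0$ and invokes the classification of those periodic points as ring class field generators from \cite{m1}. These are fixable in principle, but each is a real missing ingredient rather than a routine verification.
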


The discriminants $-d \in \mathfrak{D}_{n,3}$ are those for which $n$ is the smallest positive integer for which there is a primitive solution $(x,y) \in \mathbb{Z} \times \mathbb{Z}$ of the diophantine equation
$$4 \cdot 3^n = x^2+dy^2, \ \textrm{with} \ x \equiv \pm 1 \ (\textrm{mod} \ 9).$$ 
See Theorem \ref{thm:4} and the examples in Section 4.  \medskip

In \eqref{eqn:1.3} and \eqref{eqn:1.4} the conductors $\mathfrak{f} = \wp_3'^2, \wp_2'^3$ are the smallest powers of $\wp_3'$ and $\wp_2'$, respectively, for which $[\Sigma_\frak{f}:\Sigma] > 1$, where $\Sigma$ is the Hilbert class field of $K$.  Also, the field $L_{\mathcal{O},9}$ is equal to $\Sigma_{9}\Omega_f$, where $\Sigma_{9}$ is the ray class field over $K$ of conductor $\mathfrak{f} = (9)$, because $d_K \neq -3, -4$ when $-d \equiv 1$ (mod $3$).  \medskip

The connection between this formula and Deuring's formula for the prime $p = 3$ can be looked at as follows.  We know that $\tau_3 \big|_{\Omega_f} = \tau =\left(\frac{\Omega_f/K_d}{\wp_3}\right)$, so if $\tau$ has order $k$, then $\tau_3$ has order $k$ or $3k$, since $\tau_3^{k}$ fixes $\Omega_f$ and $[\Sigma_{\wp_3'^2}\Omega_f: \Omega_f] = 3$.  Thus, the sum \eqref{eqn:1.4} for $n$ is related to the sum \eqref{eqn:1.1} for $n$ and $n/3$.  If $3 \nmid n$, these formulas say that one-third of the time, as weighted by the class number, the automorphisms $\tau_3$ and $\tau$ will have the same order $n$, in which case a prime divisor of $\wp_3$ in $\Omega_f$ will split in $\Sigma_{\wp_3'^2}\Omega_f$.  If $3 \mid n$, the relationship is a little more complicated.  \medskip

As in previous papers \cite{m1, m5, m4, m2, am1}, the class number formula in Theorem \ref{thm:1} is connected to an algebraic function, in that it arises from a formula for the number of periodic points for this algebraic function of primitive (i.e., minimal) period $n$.  There is more to this connection which we will discuss below.  \medskip

The algebraic function $w = \hat F(z)$ that we consider here is defined by its minimal polynomial over $\mathbb{C}(z)$:
$$f(z,w) = w^3 + (-z^3 + 6z^2 - 6z - 1)w^2 + (z^3 - 3z^2 + 3z + 1)w - z^3.$$
Recall that a periodic point of $\hat F(z)$ is a value $a \in \mathbb{C}$ for which there are complex $a_1, a_2, \dots, a_{n-1}$ satisfying the simultaneous equations
$$f(a,a_1) = f(a_1,a_2) = \cdots f(a_{n-1},a) = 0.$$
See \cite{m5}.  There are several main parts of the argument:
\begin{enumerate}[(i)]
\item Defining a polynomial $R_n(x)$ whose roots are all the periodic points of $\hat F(z)$ whose periods divide $n$; computing the degree of $R_n(x)$ and the factorization of $R_n(x)$ modulo $3$ (Propositions \ref{prop:1}, \ref{prop:2}).
\item Determining the fields generated over $\mathbb{Q}$ by periodic points of $\hat F(z)$ (Theorem \ref{thm:2}).  These turn out to be the class fields $\Sigma_{\wp_3'^2}\Omega_f$ over imaginary quadratic fields $K_d = \mathbb{Q}(\sqrt{-d})$, for which $-d \equiv 1$ (mod $3$),  mentioned in Theorem \ref{thm:1}.  The degrees of these class fields over $\mathbb{Q}$ are $6h(-d)$, where $h(-d)$ is the class number of the order $\textsf{R}_{-d}$ of discriminant $-d$ in $K$. This makes use of the solutions of the cubic Fermat equation in the ring class fields $\Omega_f$ which are discussed in \cite{m1}.
\item A $3$-adic argument using the factorization modulo $3$ of $R_n(x)$ to show that $R_n(x)$ has distinct roots (Lemma \ref{lem:2}, Theorem \ref{thm:3}).  This implies that the expression
$$\textsf{P}_n(x) = \prod_{k \mid n}{R_k(x)^{\mu(n/k)}}$$
is a polynomial of degree $2\sum_{k \mid n}{\mu(n/k) 3^k}$.  Part (ii) shows that this polynomial is a product of irreducible polynomials of degree $6h(-d)$ for various discriminants $-d$.
\item Determining the minimal period of a periodic point in terms of the order of the Frobenius automorphism in $\textrm{Gal}(\Sigma_{\wp_3'^2}\Omega_f/K)$ for a prime divisor of $3$ in $K$ (Lemma \ref{lem:1}, Section 5).
\end{enumerate}

Regarding point (iv), there is a $3$-adic branch $F(z)$ of the algebraic function $\hat F(z)$ which is defined and single-valued on a subset of the maximal unramified algebraic extension $\textsf{K}_3$ of the $3$-adic field $\mathbb{Q}_3$, having the property that the periodic points $\xi$ of $\hat F(z)$ in $\textsf{K}_3$ are periodic points in the usual sense of the function $F(z)$.  This function is representable as a convergent $3$-adic series on the domain
$$\textsf{D} = \{z \in \textsf{K}_3 \ | \ z \ \textrm{integral} \ \wedge \ z \not \equiv -1 \ (\textrm{mod} \ 3)\};$$
and $F: \textsf{D} \rightarrow \textsf{D}$ is a lift of the Frobenius automorphism on $\textsf{D}$:
$$F(z) \equiv z^3 \ (\textrm{mod} \ 3), \ z \in \textsf{D}.$$
For those periodic points $\xi$ which lie in $\textsf{D}$, after completing $L \rightarrow L_\frak{p}$ with respect to some prime divisor $\mathfrak{p}$ of $\wp_3$ in $L$, we have the relation
$$\xi^{\tau_3} = F(\xi).$$
This relation makes it clear that the minimal period of $\xi$ with respect to $F(z)$ (and $\hat F(z)$) is equal to the order of the automorphism $\tau_3 \in \textrm{Gal}(L/K_d)$.  This fact leads to the formulation of the set $\mathfrak{D}_{n,3}$ in Theorem \ref{thm:1}.  The same method, using arguments analogous to (i)-(iv), was used to give a new proof for the cases $p = 2, 3, 5$ of Deuring's formula \eqref{eqn:1.1} in \cite{m4, m1, m2}.  \medskip

This led the second author to conjecture the following in \cite{m2}.

\newtheorem{conj}{Conjecture}

\begin{conj}
Let $\mathfrak{f} = \mathfrak{p}'$ be a first degree prime ideal in an imaginary quadratic field  $K_d = \mathbb{Q}(\sqrt{-d})$ in which the prime $(p) = \mathfrak{p} \mathfrak{p}'$ splits, with $-d = d_K f^2$ and $N\mathfrak{p} = p$, where $p=7$ or $11$.  Let $\mathcal{O} = \textsf{R}_{-d}$ and
$$\tau_{\frak{p}} = \left(\frac{F_d/K_d}{\frak{p}}\right),$$
where $F_d$ is the inertia field for $\mathfrak{p}$ in the extended ring class field $L_{\mathcal{O},p} = \Sigma_p \Omega_{pf}$.
If $\mathfrak{D}_{n,p}$ is the set
$$\mathfrak{D}_{n,p} = \Big\{-d \ \bigg| \left(\frac{-d}{p}\right) = +1 \ \wedge \ ord(\tau_{\frak{p}}) = n\Big\},$$
then
\begin{equation}
\sum_{-d \in \frak{D}_{n,p}}{h(-d)} = \frac{2}{\varphi(\mathfrak{f})} \sum_{k \mid n}{\mu(n/k)p^k}, \ \ n > 1.
\label{eqn:1.5}
\end{equation}
\label{conj:1}
\end{conj}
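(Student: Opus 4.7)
The plan is to transplant the four-step strategy (i)--(iv) used for $p=3$ in this paper to the primes $p=7$ and $p=11$, interpreting \eqref{eqn:1.5} as the combinatorial shadow of counting periodic points of an algebraic function whose $p$-adic branch lifts Frobenius. The right-hand side $\sum_{k \mid n} \mu(n/k) p^k$ would then emerge by M\"obius inversion from the count $p^n$ of residues of period dividing $n$, while the factor $\frac{2}{\varphi(\mathfrak{f})}$ would reflect the degree $[F_d : K_d] = \frac{\varphi(\mathfrak{f})}{2} h(-d)$ of the inertia field appearing in each summand.

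\emph{Step 1 (construction of $\hat F_p$).} One has to produce, for each $p \in \{7, 11\}$, an absolutely irreducible polynomial $f_p(z,w) \in \mathbb{Q}[z,w]$ of bidegree $(p,p)$ whose implicit function $\hat F_p$ admits a $p$-adic branch $F_p$ convergent on an unramified domain $\textsf{D} \subset \textsf{K}_p$ containing all the relevant CM values, and satisfying $F_p(z) \equiv z^p \pmod{p}$ on $\textsf{D}$. The natural candidates come from modular covers attached to $X_0(p)$: for $p=7$, $X_0(7)$ is rational and its Hauptmodul should give a plane model of a $7$-isogeny correspondence analogous to the Fermat cubic parametrization used for $p=3$ in \cite{m1}; for $p=11$, one would pick a degree-$11$ function on the genus-one curve $X_0(11)$ (e.g.\ by pullback from a Weierstrass model of the associated modular elliptic curve) and take $f_{11}(z,w)$ to be the resulting modular equation.

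\emph{Steps 2--4.} Granting Step 1, define $R_n^{(p)}(x)$ as the polynomial whose roots are the periodic points of $\hat F_p$ of period dividing $n$, obtained recursively from resultants of $f_p$. The Frobenius-lift congruence forces, as in Propositions \ref{prop:1} and \ref{prop:2}, a reduction modulo $p$ in which each factor $x^{p^k} - x$ appears with the correct multiplicity, so that $\textsf{P}_n^{(p)}(x) = \prod_{k \mid n} R_k^{(p)}(x)^{\mu(n/k)}$ has degree $\varphi(\mathfrak{f}) \sum_{k \mid n} \mu(n/k) p^k$ over $\mathbb{Q}$. A $p$-adic Newton-polygon argument, exactly as in Lemma \ref{lem:2} and Theorem \ref{thm:3}, then shows $R_n^{(p)}(x)$ has distinct roots. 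Using CM theory on the modular parameter curve, one identifies the irreducible factors of $\textsf{P}_n^{(p)}(x)$ with minimal polynomials of generators of the inertia fields $F_d$ of $\mathfrak{p}$ in $L_{\mathcal{O},p}/K_d$ for $-d \in \mathfrak{D}_{n,p}$, and verifies that each $F_d$ has degree $\varphi(\mathfrak{f}) h(-d)$ over $\mathbb{Q}$. Equating the two expressions for the degree yields \eqref{eqn:1.5}. Finally, completing at a prime $\mathfrak{P} \mid \mathfrak{p}$ in $F_d$, the relation $\xi^{\tau_\mathfrak{p}} = F_p(\xi)$ identifies the minimal period of $\xi$ with the order of $\tau_\mathfrak{p}$, confirming the definition of $\mathfrak{D}_{n,p}$.

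The main obstacle is Step 1. For $p=3$ the Fermat parametrization $x^3 + y^3 = 1$ gives a remarkably compact polynomial $f(z,w)$ together with an easily verified unramified $3$-adic Frobenius lift; for $p=7$ the analogous $7$-isogeny correspondence is much larger, and its reduction modulo $7$ would have to be analyzed carefully to certify the congruence $F_p(z) \equiv z^p$ on a sufficiently wide domain. For $p = 11$ the situation is qualitatively harder because $X_0(11)$ has genus $1$, so no global rational parametrization of the isogeny correspondence is available; constructing the $p$-adic branch explicitly as a convergent series on a large enough domain, rather than only near a fixed CM point, appears to require machinery (integral models of $X_0(11)$ or canonical-subgroup lifts) beyond what suffices for $p = 2, 3, 5$. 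Once a satisfactory $\hat F_p$ is produced, the remaining steps should follow the template of \cite{m2, am1} and the present paper with mostly arithmetic adjustments.
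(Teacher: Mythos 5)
The statement you are addressing is Conjecture \ref{conj:1}: the paper does not prove it. It only verifies \eqref{eqn:1.5} numerically for $p=7,11$ and $n=2,3,4$ (Tables \ref{tab:1}--\ref{tab:4}), shows it \emph{fails} for many primes $\equiv 11 \pmod{12}$, and explicitly defers a full proof for $p=7$ to another paper. Your submission is likewise not a proof but a program, and you concede that its essential ingredient (Step 1) is missing. That concession is the heart of the matter: everything in Steps 2--4 is conditional on producing, for each of $p=7,11$, an algebraic function with a single-valued unramified $p$-adic branch lifting Frobenius \emph{and} on proving that its periodic points generate exactly the inertia fields $F_d$ for $-d\in\mathfrak{D}_{n,p}$, with each discriminant contributing exactly one irreducible factor. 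The second half of that requirement is where the real arithmetic lives (it is the analogue of Theorem \ref{thm:2} and of the appeal to \cite[Thm.\ 4.2]{m1}), and your sketch treats it as routine ``CM theory on the modular parameter curve.'' It cannot be routine, because Table \ref{tab:1} shows the resulting identity is \emph{false} for $p=23,47,59,71,83$: a template that would work for every split prime proves too much. Any genuine proof must isolate what is special about $p=7$ and $p=11$ (for $p=7$ the paper exhibits the needed function via the $\Gamma_1(7)$ modular function of Theorem \ref{thm:6}; for $p=11$ nothing of the sort is exhibited anywhere, and your suggestion to use a degree-$11$ function on the genus-one curve $X_0(11)$ is not accompanied by any reason the resulting correspondence would have the required properties).

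There is also a concrete bookkeeping error that would break the endgame even if Step 1 were granted. You assert $\deg \textsf{P}_n^{(p)} = \varphi(\mathfrak{f})\sum_{k\mid n}\mu(n/k)p^k$ and $[F_d:\mathbb{Q}]=\varphi(\mathfrak{f})h(-d)$; equating these gives $\sum h(-d)=\sum_{k\mid n}\mu(n/k)p^k$, which is \emph{not} \eqref{eqn:1.5} --- it is off by the factor $\varphi(\mathfrak{f})/2=(p-1)/2$. What is actually needed, and what holds in the paper's $p=3$ and $p=7$ computations, is $\deg R_n = 2p^n-1$, hence $\deg \textsf{P}_n = 2\sum_{k\mid n}\mu(n/k)p^k$; the factor $2$ (not $\varphi(\mathfrak{f})$) comes from the orbit-doubling involution $A$ of Lemma \ref{lem:2}, whose existence for a candidate $f_p$ is itself something to be proved, not assumed. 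Finally, your sketch ignores the discriminants $-3f^2$ and $-4f^2$, for which the inertia field $\textsf{K}_T$ strictly contains $\Sigma_{\mathfrak{p}'}\Omega_f$ (Theorem \ref{thm:5}(c)--(e)) and membership in $\mathfrak{D}_{n,p}$ cannot be read off from the principal-ideal congruence alone (Theorem \ref{thm:7}: ``necessary but not sufficient''); handling exactly these cases is what most of the paper's Section 8 verification consists of.
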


Equation \eqref{eqn:1.2} shows that Conjecture \ref{conj:1} is true for the conductor $\mathfrak{f} = \wp_5'$.  Equations \eqref{eqn:1.3} and \eqref{eqn:1.4} show that the analogue of this conjecture also holds for $\mathfrak{f} = \wp_2'^3$ and $\wp_3'^2$.  We also note that \ref{eqn:1.5} holds for $\mathfrak{f} = \mathfrak{p}' \mid 3$ and $ p = 3$, by the class number formula \eqref{eqn:1.1} of Deuring, since the extended ring class field $L_{\mathcal{O},3}$ coincides with $\Sigma_3 \Omega_f = \Sigma(\rho) \Omega_f = \Omega_f(\rho)$,  where $\rho$ is a primitive cube root of unity.  In this case the inertia field for $\mathfrak{p}$ is just the field $\Omega_f$.  See \cite{d1}, \cite{d2}, \cite{m4}. \medskip

This conjecture was stated overly optimistically in \cite{m2}, for all primes $p > 5$, but in Section 8 we will show that it fails for many primes which are congruent to $11$ (mod $12$).  We also give evidence for the truth of the conjecture for both $p = 7$ and $p = 11$, by showing that it holds for $n = 2, 3, 4$.  See Tables \ref{tab:1}-\ref{tab:4}.  Thus, it seems that the class number formulas discussed here are a phenomenon restricted to small primes (or prime powers), at least for $n \ge 2$.  This leaves open the question of whether these formulas might still hold for other primes for large enough $n$. \medskip

Note that the inertia field $F_d$ in this conjecture is the field $\Sigma_{\frak{p}'} \Omega_f$ in case $d_K \neq -3, -4$.  If $K = \mathbb{Q}(\sqrt{-3})$ or $\mathbb{Q}(\sqrt{-1})$, fields which tend to be exceptional in the theory of complex multiplication, then the inertia field $F_d$ is usually larger than $\Sigma_{\frak{p}'} \Omega_f$.  Furthermore, if $f > 1$ is not divisible by $3$ when $d_K = -3$, or by $2$ when $d_K = -4$, then $F_d$ can be difficult to determine.  (But see Theorem \ref{thm:5}.) This makes the above conjecture difficult to check for primes for which these two fields are possibilities.  However, when these class fields $F_d$ (for a given prime $p$) are generated over $\mathbb{Q}$ by periodic points of an algebraic function (independent of $-d$ and $f$), then this gives a method for determining $F_d$ and exactly which discriminants of the form $-3f^2, -4f^2$ lie in the set $\mathfrak{D}_{n,p}$.  We illustrate how this works for the prime $p = 7$ in Section 8, where such an algebraic function is available.  See Theorem \ref{thm:6}. \medskip

We will give a full proof of this conjecture for $p = 7$ in another paper.

\section{Preliminary results.}
In this section we give some polynomial identities and congruences which will be important for the proof of Theorem \ref{thm:1}. \medskip

Let $f(z,w)$ be the polynomial
\begin{equation}
f(z,w) = w^3 + (-z^3 + 6z^2 - 6z - 1)w^2 + (z^3 - 3z^2 + 3z + 1)w - z^3,
\label{eqn:2.1}
\end{equation}
and let $\hat F(z)$ denote the algebraic function for which $f(z,\hat F(z)) = 0$.  Then one branch of $\hat F(z)$ can be written as
\begin{align}
\notag F(z) &= \frac{1}{3}(z^2 - 4z + 1)(z^3 - 6z^2 + 3z + 1)^{1/3} + \frac{1}{3}(z - 2)(z^3 - 6z^2 + 3z + 1)^{2/3}\\
\label{eqn:2.2} & \ \ + \frac{1}{3}(z^3 - 6z^2 + 6z + 1).
\end{align}

The polynomial $f(z,w)$ is one of the factors of the following resultant.  Let
\begin{align}
\label{eqn:2.3} g(x,y) &= (y^2 + 3y + 9)x^3 - (y + 6)^3,\\
\label{eqn:2.4} h(x,z) &= z^3 - (3 + x)z^2 + zx + 1.
\end{align}
Now form
\begin{align*}
&\textrm{Res}_y(\textrm{Res}_x(g(x,y),h(x,z)),h(y,w)) = \\
& \ \ \ -(w^3z^3 - 2w^2z^3 - 3w^2z^2 + 2wz^3 + 3w^2z - z^3 + w^2 + 3z^2 - w - 3z + 1)\\
& \ \ \ \times (w^3z^3 - 3w^3z^2 - w^2z^3 + 3w^3z + 9w^2z^2 + wz^3 - w^3 - 9w^2z - 6wz^2\\
& \ \ \ \ \ + 2w^2 + 6wz - 2w + 1)\\
& \ \ \ \times [w^3 + (-z^3 + 6z^2 - 6z - 1)w^2 + (z^3 - 3z^2 + 3z + 1)w - z^3]\\
&= -k_1(z,w) k_2(z,w) f(z,w).
\end{align*}
We have the congruences
\begin{align*}
k_1(z,w) & \equiv (wz^3 + 2z^3 + 1)(w + 1)^2 \ (\textrm{mod} \ 3);\\
k_2(z,w) & \equiv (wz^3 + 2w + 1)(w + 1)^2 \ (\textrm{mod} \ 3);\\
f(z,w) & \equiv (2z^3 + w)(w + 1)^2 \ (\textrm{mod} \ 3).
\end{align*}
This yields that
\begin{equation}
k_1(z,z^3) \equiv k_2(z,z^3) \equiv (z+1)^{12} \ \ (\textrm{mod} \ 3),
\label{eqn:2.5}
\end{equation}
and
$$f(z,z^3) \equiv 0 \ (\textrm{mod} \ 3).$$
Furthermore,
\begin{equation}
g\left(\frac{z^3-3z^2+1}{z(z-1)},\frac{w^3-3w^2+1}{w(w-1)}\right) = \frac{-1}{z^3(z-1)^3 w^3(w-1)^3} k_1(z,w) k_2(z,w) f(z,w).
\label{eqn:2.6}
\end{equation}

\section{The polynomials $R_n(x)$.}
In this section we determine a polynomial $R_n(x)$ whose roots are the periodic points of the algebraic function $\hat F(z)$ in $\overline{\mathbb{Q}}$ or $\overline{\mathbb{Q}_3}$ (algebraic closure of the $3$-adic field $\mathbb{Q}_3$). \medskip

As in \cite{m1, m5, am1, am2} we define the resultants $R_n(x)$ for the polynomial $f(z,w)$ inductively, as follows.  We set
\begin{align*}
R^{(1)}(x,x_1) &= f(x,x_1),\\
R^{(n)}(x,x_n) & = \textrm{Res}_{x_{n-1}}(R^{(n-1)}(x,x_{n-1}), f(x_{n-1},x_n)), \ \ n \ge 2.
\end{align*}
Then $R_n(x)$ is obtained from $R^{(n)}(x,x_n)$ by setting $x_n = x$:
$$R_n(x) = R^{(n)}(x,x).$$
It is not hard to see that the roots of $R_n(x)$ are exactly the values $a$ (in some algebraically closed field $F$) for which there are $a_1, \dots, a_{n-1} \in F$ for which the simultaneous equations hold:
$$f(a,a_1) = f(a_1,a_2) = \cdots = f(a_{n-1}, a) = 0.$$
With this definition we will prove the following two propositions.

\newtheorem{prop}{Proposition}

\begin{prop} We have the congruences
\begin{align*}
R^{(n)}(x,x_n) & \equiv -(x^{3^n} -x_n)(x_n+1)^{3^n-1} \ (\textrm{mod} \ 3);\\
R_n(x) & \equiv -(x^{3^n} -x)(x +1)^{3^n-1} \ (\textrm{mod} \ 3).
\end{align*}
\label{prop:1}
\end{prop}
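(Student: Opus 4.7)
The strategy is to induct on $n$ to prove the congruence for $R^{(n)}(x,x_n)$, and then specialize $x_n = x$ to obtain the statement for $R_n(x)$. For the base case $n = 1$, the congruence $f(z,w) \equiv (2z^3 + w)(w+1)^2 \ (\textrm{mod} \ 3)$ given in Section 2 rewrites, using $2 \equiv -1 \ (\textrm{mod} \ 3)$, as $f(z,w) \equiv -(z^3 - w)(w+1)^2 \ (\textrm{mod} \ 3)$, and since $R^{(1)}(x,x_1) = f(x,x_1)$, this is exactly the desired form.

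For the inductive step, I would apply the product formula
$$\textrm{Res}_y(A,B) = \textrm{lc}_y(A)^{\deg_y B} \prod_\alpha B(\alpha),$$
where $\alpha$ ranges over the roots of $A$ in $y$ counted with multiplicity, taking $y = x_{n-1}$, $A = R^{(n-1)}(x,x_{n-1})$, and $B = f(x_{n-1},x_n)$. The inductive hypothesis gives $A \equiv -(x^{3^{n-1}} - x_{n-1})(x_{n-1}+1)^{3^{n-1}-1} \ (\textrm{mod} \ 3)$, so mod $3$ the polynomial $A$ has degree $3^{n-1}$ in $x_{n-1}$ with leading coefficient $1$, a simple root at $x_{n-1} = x^{3^{n-1}}$, and a root of multiplicity $3^{n-1} - 1$ at $x_{n-1} = -1$. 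Similarly $B \equiv -(x_{n-1}^3 - x_n)(x_n+1)^2 \ (\textrm{mod} \ 3)$, so
$$B(x^{3^{n-1}}) \equiv -(x^{3^n} - x_n)(x_n+1)^2, \qquad B(-1) \equiv (x_n+1)^3 \ (\textrm{mod} \ 3).$$
Multiplying these evaluations yields
$$R^{(n)}(x,x_n) \equiv -(x^{3^n} - x_n)(x_n+1)^{2 + 3(3^{n-1}-1)} = -(x^{3^n} - x_n)(x_n+1)^{3^n-1} \ (\textrm{mod} \ 3),$$
and setting $x_n = x$ gives the second congruence at once.

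The one point requiring care is that the resultant mod $3$ coincides with the resultant of the mod-$3$ reductions; this holds because the leading coefficient of $R^{(n-1)}(x,x_{n-1})$ in $x_{n-1}$ is $\equiv 1 \ (\textrm{mod} \ 3)$ by the inductive hypothesis, so no degree drop occurs under reduction and the specialization-of-coefficients argument for resultants applies. Apart from this bookkeeping, the proof is a direct computation via the resultant-as-product-of-evaluations formula, with the factor $(w+1)^2$ from the mod-$3$ factorization of $f$ ultimately producing the full $(x+1)^{3^n-1}$ through the telescoping exponent $2 + 3(3^{n-1}-1) = 3^n - 1$.
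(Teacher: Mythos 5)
Your proof is correct and follows essentially the same route as the paper: induction on $n$ together with the product formula for the resultant, the only (cosmetic) difference being that you evaluate $f(x_{n-1},x_n)$ at the roots $x^{3^{n-1}}$ and $-1$ of $R^{(n-1)}$ mod $3$, whereas the paper evaluates $R^{(n-1)}$ at the cube roots $\omega^i\sqrt[3]{x_n}$ of $f$ — your choice avoids the radicals and is slightly cleaner. Your remark on degree preservation under reduction mod $3$ is the right point to flag (one should also note that $\deg_{x_{n-1}}R^{(n-1)}\le 3^{n-1}$ over $\mathbb{Z}$ by the resultant construction, so the mod-$3$ leading coefficient really is the integral one), a subtlety the paper passes over silently.
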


\begin{proof}
Since $f(z,w)=w^3+(-z^3+6z^2-6z-1)w^2+(z^3-3z^2+3z+1)w-z^3$, we find for $n=1$ that
\begin{align*}
R^{(1)}(x,x_1) & = f(x,x_1)\\
& = x_1^3+(-x^3+6x^2-6x-1)x_1^2+(x^3-3x^2+3x+1)x_1-x^3\\
 & \equiv x_1^3+(-x^3-1)x_1^2+(x^3+1)x_1-x^3\ (\textrm{mod}\ 3)\\
 & \equiv-x^3(x_1^2+2x_1+1)+x_1(x_1^2+2x_1+1)\ (\textrm{mod}\ 3)\\
 & \equiv-(x^3-x_1)\,(x_1+1)^2\ (\textrm{mod}\ 3).
\end{align*}
Hence,
$$R_1(x) \equiv -(x^3-x)\,(x+1)^2\ (\textrm{mod}\ 3).$$
Now for the induction step, assume the result is true for $n-1$. Then, modulo $3$, we have
\begin{align*}
R^{(n)}&(x,x_n) = \textrm{Res}_{x_{n-1}}(R^{(n-1)}(x,x_{n-1}),f(x_{n-1},x_n))\\
&\equiv\textrm{Res}_{x_{n-1}}\big(-(x^{3^{n-1}} -x_{n-1})(x_{n-1} +1)^{3^{n-1}-1},-(x_{n-1}^3-x_n)(x_n+1)^2\big).
\end{align*}
The roots of $(x_{n-1}^3-x_n)\,(x_n+1)^2$, as a polynomial in $x_{n-1}$, are $\omega^i\sqrt[3]{x_n}$ for $i=0,1,2$, where $w=e^{2\pi i/3}$. Hence,
using the formulas from \cite[p. 279]{jvu}, we find that
\begin{align*}
\textrm{Res}_{x_{n-1}}&\big(-(x^{3^{n-1}} -x_{n-1})(x_{n-1} + 1)^{3^{n-1}-1},-(x_{n-1}^3-x_n)(x_n+1)^2\big)\\
&=(-1)^{3^{n-1}\cdot3}\big[-(x_n+1)^2\big]^{3^{n-1}} \big[-\big(x^{3^{n-1}}-\sqrt[3]{x_n}\big)\big(\sqrt[3]{x_n}+1\big)^{3^{n-1}-1}\big]\\
& \ \ \ \times\big[-\big(x^{3^{n-1}}-\omega\sqrt[3]{x_n}\big) \big(\omega\sqrt[3]{x_n}+1\big)^{3^{n-1}-1}\big]\\
& \ \ \ \times\big[-\big(x^{3^{n-1}}-\omega^2\sqrt[3]{x_n}\big) \big(\omega^2\sqrt[3]{x_n}+1\big)^{3^{n-1}-1}\big]\\
&=-\big(x_n+1\big)^{2\cdot3^{n-1}} \big(x^{3^n}-x_n\big) \big(x_n+1\big)^{3^{n-1}-1}\\
&=-\big(x^{3^n}-x_n\big) \big(x_n+1\big)^{3^n-1}.
\end{align*}
Hence, we obtain
\begin{align*}
R^{(n)}(x,x_n) & \equiv -(x^{3^n} -x_n)(x_n +1)^{3^n-1} \ (\textrm{mod} \ 3),\\
R_n(x) & \equiv -(x^{3^n} -x)(x +1)^{3^n-1} \ (\textrm{mod} \ 3),
\end{align*}
completing the induction.
\end{proof}

\begin{prop} The polynomial $R^{(n)}(x,x_n)$ has the form\[R^{(n)}(x,x_n)=-(x_n^2-x_n+1)(x_n^3-6x_n^2+3x_n+1)^{3^{n-1}-1}x^{3^n}+S_n(x,x_n),\]where deg$_xS_n(x,x_n)\le3^n-1$ and deg$_{x_n}S_n(x,x_n)=3^n$.  Moreover, the expression containing terms in $S_n(x,x_n)$ that are independent of $x$ is equal to the coefficient of the highest power of $x$ in $R^{(n)}(x,x_n)$ multiplied by $(-x_n)$; in other words, $S_n(0,x_n)$ equals
$$T_n(x_n) = x_n(x_n^2-x_n+1)(x_n^3-6x_n^2+3x_n+1)^{3^{n-1}-1}.$$
Finally, $\textrm{deg}_{x_n}(S_n(x,x_n)-T_n(x_n)) \le 3^n-1$.
\label{prop:2}
\end{prop}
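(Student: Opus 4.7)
I will prove Proposition \ref{prop:2} by induction on $n$, using the recursion $R^{(n)}(x, x_n) = \textrm{Res}_{x_{n-1}}(R^{(n-1)}(x, x_{n-1}), f(x_{n-1}, x_n))$. The base case $n = 1$ is direct inspection of $f(x, x_1)$: the coefficient of $x^3$ is $-(x_1^2 - x_1 + 1) = L_1(x_1)$, and $R^{(1)}(0, x_1) = f(0, x_1) = x_1^3 - x_1^2 + x_1 = T_1(x_1)$, with all degree claims immediate.

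For the inductive step, I first observe that $R^{(n)}(x, x_n)$ is monic of $x_n$-degree $3^n$. This follows because $f(x_{n-1}, x_n)$ is monic of degree $3$ in $x_n$ and $R^{(n-1)}$ has degree $3^{n-1}$ in $x_{n-1}$ by induction; the resultant, written as the product over the $3^{n-1}$ roots of $R^{(n-1)}(x, \cdot)$, then has $x_n$-degree $3^n$ with leading coefficient $1$. Writing $L_n(x_n)$ for the claimed leading coefficient of $R^{(n)}$ in $x$ and setting $S_n = R^{(n)} - L_n(x_n) x^{3^n}$, one checks $\deg_x S_n \le 3^n - 1$ automatically, and since $\deg_{x_n} L_n = 3^n - 1$ while $R^{(n)}$ is monic in $x_n$, also $\deg_{x_n} S_n = 3^n$ with leading coefficient $1$. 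As $T_n(x_n)$ is likewise monic of $x_n$-degree $3^n$, the final claim $\deg_{x_n}(S_n - T_n) \le 3^n - 1$ reduces to the identity $S_n(0, x_n) = T_n(x_n)$.

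The heart of the proof is to evaluate $R^{(n)}(0, x_n)$ and the coefficient of $x^{3^n}$ in $R^{(n)}$. Since the resultant commutes with specialization, the induction hypothesis gives $R^{(n)}(0, x_n) = \textrm{Res}_X(T_{n-1}(X), f(X, x_n))$, and multiplicativity of the resultant in the first argument factors this as $f(0, x_n) \cdot \textrm{Res}_X(X^2 - X + 1, f) \cdot \textrm{Res}_X(P(X), f)^{3^{n-2} - 1}$, where $P(X) = X^3 - 6X^2 + 3X + 1$. The crucial identities, to be verified by direct substitution, are (a) $f(\rho, x_n) = P(x_n)$ for $\rho$ a root of $X^2 - X + 1$ (use $\rho^3 = -1$), and (b) $f(\sigma, x_n) = (x_n - \sigma)^3$ for $\sigma$ a root of $P$ (use $\sigma^3 = 6\sigma^2 - 3\sigma - 1$). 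Together with $f(0, x_n) = x_n(x_n^2 - x_n + 1)$ these collapse the product to $x_n(x_n^2 - x_n + 1) P(x_n)^{3^{n-1} - 1} = T_n(x_n)$, after simplifying the exponent $2 + 3(3^{n-2}-1) = 3^{n-1} - 1$.

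For the coefficient of $x^{3^n}$ in $R^{(n)}$, I use the dual form of the resultant $R^{(n)}(x, x_n) = (x_n^2 - x_n + 1)^{3^{n-1}} \prod_{j=1}^3 R^{(n-1)}(x, \beta_j)$ (signs cancel since $3^n + 3^{n-1}$ is even), where $\beta_1, \beta_2, \beta_3$ are the three roots of $f(X, x_n) = 0$ in $X$. The leading $x$-coefficient of this product is the symmetric function $\prod_j L_{n-1}(\beta_j)$, which identities (a), (b) evaluate via $\prod_j(\beta_j^2 - \beta_j + 1) = P(x_n)^2/(x_n^2 - x_n + 1)^2$ and $\prod_j P(\beta_j) = P(x_n)^3/(x_n^2 - x_n + 1)^3$; after combining with the outer factor the denominators cancel exactly, yielding $L_n(x_n) = -(x_n^2 - x_n + 1) P(x_n)^{3^{n-1} - 1}$. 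The main obstacle is spotting identities (a) and (b), which are not forced by any general principle but reflect the special arithmetic of $f(z, w)$ and are closely tied to identity \eqref{eqn:2.6}; once in hand, the remaining steps are systematic bookkeeping with resultants.
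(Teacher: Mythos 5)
Your proof is correct and rests on the same core strategy as the paper's: induction through the resultant recursion, with all the real work done by the two special-value identities $f(\rho,x_n)=x_n^3-6x_n^2+3x_n+1$ for $\rho^2-\rho+1=0$ and $f(\sigma,x_n)=(x_n-\sigma)^3$ for $\sigma^3-6\sigma^2+3\sigma+1=0$; these are exactly the paper's key computations (the second is \eqref{eqn:5.2}), and your leading-coefficient calculation is essentially identical to the paper's. You organize two steps differently, both defensibly. First, for $S_n(0,x_n)=T_n(x_n)$ you specialize $x\to0$ before taking the resultant (legitimate here because $R^{(n-1)}$ is monic in $x_{n-1}$, so no degree drop occurs) and then expand $\textrm{Res}_X(T_{n-1}(X),f(X,x_n))$ multiplicatively over the factors of $T_{n-1}$; the paper instead extracts the $x$-free terms from the product over the roots $\rho_i$ of $f(\cdot,x_n)$, which additionally uses $\prod_i\rho_i=x_n$. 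The two computations are dual and give the same result. Second, your observation that $R^{(n)}(x,x_n)$ is monic in $x_n$ of degree $3^n$ with leading coefficient the constant $1$ yields $\deg_{x_n}S_n=3^n$ and $\deg_{x_n}(S_n-T_n)\le3^n-1$ at once, replacing the paper's more laborious symmetric-function bound on the terms $\rho_1^i\rho_2^j\rho_3^k$; this is a genuine streamlining. One small remark: your value $\prod_jP(\beta_j)=P(x_n)^3/(x_n^2-x_n+1)^3$, with $P(X)=X^3-6X^2+3X+1$, has the opposite sign from the paper's displayed $-P(x_n)^3/(x_n^2-x_n+1)^3$; yours is the correct sign, and the discrepancy is harmless in either account since this quantity enters only to the even power $3^{n-2}-1$.
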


\begin{proof} We prove this by induction. For $n=1$, we have
\begin{align*}
R^{(1)}(x,x_1) &= f(x,x_1)\\
&= -(x_1^2-x_1+1)x^3+(6x_1^2-3x_1)x^2+(-6x_1^2+3x_1)x+(x_1^3-x_1^2+x_1),
\end{align*}
 which satisfies the assertions with
 \begin{align*}
 S_1(x,x_1) &= (6x_1^2-3x_1)x^2+(-6x_1^2+3x_1)x+(x_1^3-x_1^2+x_1),\\
 T_1(x_1) &= x_1^3-x_1^2+x_1.
 \end{align*}
 Proceeding by induction from $n-1$ to $n$, we have
\begin{align*}
R^{(n)}(x,x_n)&=\text{Res}_{x_{n-1}}\big(R^{(n-1)}(x,x_{n-1}),f(x_{n-1},x_n)\big) \\
&=-\text{Res}_{x_{n-1}}\big(f(x_{n-1},x_n),R^{(n-1)}(x,x_{n-1})\big)\\
&=-\big[-(x_n^2-x_n+1)\big]^{3^{n-1}}\prod_i{R^{(n-1)}(x,\rho_i)},
\end{align*}
where $\rho_i,\ 1\le i\le3$, are the roots of $f(x_{n-1},x_n)=0$, as a polynomial in $x_{n-1}$.
Write
\begin{align*}
f(z,w) &= -(w^2-w+1)z^3 +(6w^2-3w)z^2 - (6w^2-3w)z + w^3-w^2+w\\
&= -(w^2-w+1)(z^3-w) +(6w^2-3w)(z^2-z).
\end{align*}
Thus, for $n\ge2$,
\begin{align*}
R^{(n)}&(x,x_n)=(x_n^2-x_n+1)^{3^{n-1}}\\
\times&\prod_i{\left[-(\rho_i^2-\rho_i+1)(\rho_i^3-6\rho_i^2+3\rho_i+1)^{3^{n-2}-1}x^{3^{n-1}}+S_{n-1}(x,\rho_i)\right]}
\end{align*}
We observe that
$$\sum_i\rho_i=\frac{6x_n^2-3x_n}{x_n^2-x_n+1},\ \ \prod_i\rho_i=x_n,\ \ \rho_1\rho_2+\rho_2\rho_3+\rho_3\rho_1=\frac{6x_n^2-3x_n}{x_n^2-x_n+1}.$$
The leading term in $x$ in the above product is
\begin{align*}
-(&x_n^2-x_n+1)^{3^{n-1}}\Big[\prod_i(\rho_i^2-\rho_i+1)(\rho_i^3-6\rho_i^2+3\rho_i+1)^{3^{n-2}-1}\Big]x^{3^n}\\
=-(&x_n^2-x_n+1)^{3^{n-1}}\times\big[(\rho_1^2-\rho_1+1)(\rho_2^2-\rho_2+1)(\rho_3^2-\rho_3+1)\big]\\&\times\big[(\rho_1^3-6\rho_1^2+3\rho_1+1)(\rho_2^3-6\rho_2^2+3\rho_2+1)(\rho_3^3-6\rho_3^2+3\rho_3+1)\big]^{3^{n-2}-1}x^{3^n}.
\end{align*}
Notice that evaluating the product in the first bracketed expression in the above equation is tantamount to finding the resultant 
$$\textrm{Res}_{x_{n-1}}\big(f(x_{n-1},x_n),(x_{n-1}^2-x_{n-1}+1)\big)$$
divided by $(-(x_n^2-x_n+1))^2$. Also note that
\begin{align*}
\textrm{Res}_{x_{n-1}}&\big(f(x_{n-1},x_n),(x_{n-1}^2-x_{n-1}+1)\big)\\
&=(-1)^6\text{Res}_{x_{n-1}}\big((x_{n-1}^2-x_{n-1}+1),f(x_{n-1},x_n)\big).
\end{align*}
The roots of $x_{n-1}^2-x_{n-1}+1$ are $-\omega$ and $-\omega^2$, where $\omega$ is a primitive cube root of unity. Then,
\begin{align*}
\textrm{Res}_{x_{n-1}}&\big((x_{n-1}^2-x_{n-1}+1),f(x_{n-1},x_n)\big)\\
=\big[&(x_n^2-x_n+1)(\omega^3+x_n)+(6x_n^2-3x_n)(\omega^2+\omega)\big]\\
&\times\big[(x_n^2-x_n+1)(\omega^6+x_n)+(6x_n^2-3x_n)(\omega^4+\omega^2)\big]\\
=\big[&(x_n^2-x_n+1)(1+x_n)-(6x_n^2-3x_n)\big]^2\\
=\big(&x_n^3-6x_n^2+3x_n+1\big)^2.
\end{align*}
Hence, we find that
$$(\rho_1^2-\rho_1+1)(\rho_2^2-\rho_2+1)(\rho_3^2-\rho_3+1)=\frac{(x_n^3-6x_n^2+3x_n+1)^2}{(x_n^2-x_n+1)^2}.$$
In a similar way, we calculate the second bracketed expression by finding the resultant $\textrm{Res}_{x_{n-1}}\big(f(x_{n-1},x_n),(x_{n-1}^3-6x_{n-1}^2+3x_{n-1}+1)\big)$ divided by $-(x_n^2-x_n+1)^3$.  Also note that
\begin{align}
\notag \textrm{Res}_{x_{n-1}}\big(&f(x_{n-1},x_n),(x_{n-1}^3-6x_{n-1}^2+3x_{n-1}+1)\big)\\
\label{eqn:5.1} &=-\text{Res}_{x_{n-1}}\big((x_{n-1}^3-6x_{n-1}^2+3x_{n-1}+1),f(x_{n-1},x_n)\big).
\end{align}
We now claim that the roots of $f(x_{n-1},x_n)$ as a polynomial in $x_n$, for $x_{n-1} = \gamma$, a root of $x_{n-1}^3-6x_{n-1}^2+3x_{n-1}+1$, are the roots of the same cubic.  In fact, we have that
\begin{equation}
f(\gamma, x_n) = (x_n-\gamma)^3.
\label{eqn:5.2}
\end{equation}
From this it would follow that the resultant in \eqref{eqn:5.1} is $(x_n^3-6x_n^2+3x_n+1)^3$.  To prove \eqref{eqn:5.2}, note that the roots $\gamma$ of $x^3-6x^2+3x+1$ are fixed points of the function $F(z)$.  Hence $f(\gamma, \gamma) = 0$.  Equation \eqref{eqn:5.2} will follow from the fact that the first two partial derivatives of $f(x_{n-1},x_n)$ with respect to $x_{n}$ vanish at $(x_{n-1},x_n) = (\gamma, \gamma)$.  We first have that
\begin{align*}
\frac{\partial f}{\partial x_n}&=\frac{\partial}{\partial x_n}\big[(x_n^2-x_n+1)(-x_{n-1}^3+x_n)+(6x_n^2-3x_n)(x_{n-1}^2-x_{n-1})\big]\\
&=(2x_n-1)(-x_{n-1}^3+x_n)+(x_n^2-x_n+1)+(12x_n-3)(x_{n-1}^2-x_{n-1}).
\end{align*}
In this expression we substitute $x_{n-1} = x_n = \gamma$ and find that
\begin{align*}
\frac{\partial f}{\partial x_n}\bigg|_{x_n=x_{n-1} = \gamma}&=(2\gamma-1)(-\gamma^3+\gamma)+(\gamma^2-\gamma+1)+(12\gamma-3)(\gamma^2-\gamma)\\
&=-2\gamma^4+13\gamma^3-12\gamma^2+\gamma+1\\
&=(-2\gamma+1)(\gamma^3-6\gamma^2+3\gamma+1)\\
&=0.
\end{align*}
In a similar way, we find that
\begin{equation*}
\frac{\partial^2f}{\partial x_n^2}=2(-x_{n-1}^3+x_n)+(2x_n-1)+(2x_n-1)+12(x_{n-1}^2-x_{n-1})
\end{equation*}
and
\begin{align*}
\frac{\partial^2f}{\partial x_n^2}\bigg|_{x_n=x_{n-1}=\gamma}&=2(-\gamma^3+\gamma)+(2\gamma-1)+(2\gamma-1)+12(\gamma^2-\gamma)\\
&=-2\gamma^3+12\gamma^2-6\gamma-2\\
&=-2(\gamma^3-6\gamma^2+3\gamma+1)\\
&=0.
\end{align*}
Alternatively, we have
$$f(x,y)-(y-x)^3 = -y(y - 1)(x^3 - 6x^2 + 3x + 1),$$
from which \eqref{eqn:5.2} follows directly.  It follows that
$$(\rho_1^3-6\rho_1^2+3\rho_1+1)(\rho_2^3-6\rho_2^2+3\rho_2+1)(\rho_3^3-6\rho_3^2+3\rho_3+1)=-\frac{(x_n^3-6x_n^2+3x_n+1)^3}{(x_n^2-x_n+1)^3}.$$
Hence, the leading term in the expression for $R^{(n)}(x,x_n)$, for $n \ge 2$, is equal to
\begin{align*}
&-(x_n^2-x_n+1)^{3^{n-1}} \frac{(x_n^3-6x_n^2+3x_n+1)^2}{(x_n^2-x_n+1)^2} \left(-\frac{(x_n^3-6x_n^2+3x_n+1)^3}{(x_n^2-x_n+1)^3}\right)^{3^{n-2}-1}x^{3^n}\\
&=-(x_n^2-x_n+1)(x_n^3-6x_n^2+3x_n+1)^{3^{n-1}-1}x^{3^n}.
\end{align*}
The set of terms without $x$ in $R^{(n)}(x,x_n)$ comes from the product of the terms without $x$ in $S_{n-1}(x,\rho_i)$ multiplied by $(x_n^2-x_n+1)^{3^{n-1}}$.  From the inductive hypothesis this product is given by
$$(x_n^2-x_n+1)^{3^{n-1}}\prod_i\big[\rho_i(\rho_i^2-\rho_i+1)(\rho_i^3-6\rho_i^2+3\rho_i+1)^{3^{n-2}-1}\big],$$
and using our previous calculations this simplifies to be
\begin{align*}
\notag T_n(x_n) &= (x_n^2-x_n+1)^{3^{n-1}}(x_n)\frac{(x_n^3-6x_n^2+3x_n+1)^2}{(x_n^2-x_n+1)^2}\\
& \ \ \ \times \left[\frac{(x_n^3-6x_n^2+3x_n+1)^3}{(x_n^2-x_n+1)^3}\right]^{3^{n-2}-1}\\
&=x_n(x_n^2-x_n+1)(x_n^3-6x_n^2+3x_n+1)^{3^{n-1}-1}.
\end{align*}
This shows that
$$R^{(n)}(x,x_n)=-(x_n^2-x_n+1)(x_n^3-6x_n^2+3x_n+1)^{3^{n-1}-1}x^{3^n}+S_n(x,x_n),$$
and it remains to prove the degree assertions for $S_n(x,x_n)$.  After singling out the leading term in the expression for $R^{(n)}(x,x_n)$, which is clearly the highest degree term in $x$, the remaining expression $S_n(x,x_n)$ must have degree in $x$ at most $3^n-1$. To find the degree of $x_n$ in $S_n(x,x_n)$, observe that the largest degree term there comes from the product of the largest degree terms in $S_{n-1}(x,\rho_i)$ multiplied by $(x_n^2-x_n+1)^{3^{n-1}}$.  This is because any term $\rho_1^i \rho_2^j \rho_3^k$ multiplying a nontrivial power $x^r$ combines with a similar term for which $i \ge j \ge k$, and contributes an expression
$$t_r = (x_n^2-x_n+1)^{3^{n-1}} \sigma_1^{i-j} \sigma_2^{j-k} \sigma_3^k = (x_n^2-x_n+1)^{3^{n-1}-(i-k)} (6x_n^2-3x_n)^{i-k} x_n^k$$
to the coefficient of $x^r$ in the product, where the $\sigma_i$ are the elementary symmetric functions of the roots $\rho_i$.  Here $k \le 3^{n-1}-1$, so the degree in $x_n$ of $t_r$ is at most
$$2\cdot 3^{n-1}-2(i-k)+2(i-k)+k \le 3\cdot 3^{n-1}-1 = 3^n-1.$$
Therefore, the leading term in the product is the leading term of $T_n(x_n)$.  This shows that the degree of $x_n$ in $S_n(x,x_n)-T_n(x_n)$ is less than or equal to $3^n-1$, finishing the proof.
\end{proof}

\newtheorem{cor}{Corollary}

\begin{cor}The degree of $R_n(x)$ is $\textrm{deg}(R_n(x)) = 2\cdot3^n-1$.
\label{cor:1}
\end{cor}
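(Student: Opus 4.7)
The plan is to obtain the degree by squeezing it between matching upper and lower bounds that fall straight out of the two preceding propositions. Proposition \ref{prop:1} delivers the lower bound via reduction mod $3$, and Proposition \ref{prop:2} delivers the upper bound via a bidegree count on $R^{(n)}(x,x_n)$.

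First, I would invoke Proposition \ref{prop:1}: $R_n(x) \equiv -(x^{3^n}-x)(x+1)^{3^n-1} \pmod{3}$. The right-hand side is a polynomial in $x$ of degree $3^n + (3^n-1) = 2\cdot 3^n -1$, with leading coefficient $-1$, which is a unit mod $3$. Hence the reduction mod $3$ does not kill the top coefficient, giving at once
\[
\deg R_n(x) \ \ge \ 2\cdot 3^n -1.
\]

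Next I would establish the matching upper bound from Proposition \ref{prop:2}. Writing
\[
R^{(n)}(x,x_n) = -(x_n^2-x_n+1)(x_n^3-6x_n^2+3x_n+1)^{3^{n-1}-1}\,x^{3^n} + S_n(x,x_n),
\]
the first summand, upon substitution $x_n = x$, has degree $3^n + \bigl(2+3(3^{n-1}-1)\bigr) = 3^n + (3^n-1) = 2\cdot 3^n -1$. For the remainder, Proposition \ref{prop:2} guarantees $\deg_x S_n \le 3^n-1$ and $\deg_{x_n} S_n \le 3^n$, so every monomial $x^i x_n^j$ occurring in $S_n$ satisfies $i+j \le (3^n-1)+3^n = 2\cdot 3^n -1$. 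Consequently $\deg S_n(x,x) \le 2\cdot 3^n -1$, and so
\[
\deg R_n(x) \ = \ \deg R^{(n)}(x,x) \ \le \ 2\cdot 3^n -1.
\]

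Combining the two inequalities yields $\deg R_n(x) = 2 \cdot 3^n -1$, as asserted. There is essentially no obstacle here: both bounds are immediate consequences of the two propositions, and the mod $3$ reduction conveniently certifies that the leading term of the first summand in Proposition \ref{prop:2} cannot be cancelled by contributions from $S_n(x,x)$.
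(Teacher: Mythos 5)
Your proof is correct, but it certifies the non-cancellation of the top term differently from the paper. The paper's proof is the single line ``this follows from Proposition \ref{prop:2} on setting $x_n = x$,'' and the intended mechanism lives entirely inside Proposition \ref{prop:2}: besides the bidegree bounds on $S_n$, that proposition records that the $x$-free part of $S_n$ is $T_n(x_n)$, of degree $3^n$, and that $\deg_{x_n}(S_n(x,x_n)-T_n(x_n)) \le 3^n-1$. Since every monomial of $S_n$ other than those of $T_n$ has $x$-degree at most $3^n-1$ \emph{and} $x_n$-degree at most $3^n-1$, setting $x_n=x$ gives $\deg S_n(x,x) \le \max\bigl(3^n,\ 2\cdot 3^n-2\bigr) = 2\cdot 3^n-2$, strictly below the degree $2\cdot 3^n-1$ of the leading summand $-(x^2-x+1)(x^3-6x^2+3x+1)^{3^{n-1}-1}x^{3^n}$; hence no cancellation can occur and the degree is read off directly. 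You instead use only the cruder bound $\deg S_n(x,x)\le 2\cdot 3^n-1$, which by itself leaves cancellation open, and you close that gap by importing the mod~$3$ computation of Proposition \ref{prop:1} as a lower bound (legitimately: $R_n \in \mathbb{Z}[x]$ and the coefficient of $x^{2\cdot 3^n-1}$ is a unit mod $3$, hence nonzero). Both routes are sound; yours trades the final, somewhat fiddly, degree refinement involving $T_n$ in Proposition \ref{prop:2} for a dependence on Proposition \ref{prop:1}, while the paper's keeps the corollary a consequence of Proposition \ref{prop:2} alone.
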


\begin{proof} This follows from Proposition \ref{prop:2} on setting $x_n = x$.
\end{proof}

\noindent {\bf Examples.}  From the above definition of $R_n(z)$ we have
\begin{equation}
R_1(z) = f(z,z) = -z(z - 1)(z^3 - 6z^2 + 3z + 1)
\label{eqn:3.1}
\end{equation}
and
\begin{align*}
R_2(z) &= -z(z - 1)(z^3 - 6z^2 + 3z + 1) (z^{12} - 24z^{11} + 202z^{10} - 712z^9 + 1561z^8\\
& \ \ \   - 2308z^7 + 2354z^6 - 1660z^5 + 778z^4 - 208z^3 + 4z^2 + 12z + 1).
\end{align*}
The cubic factor of $R_1(z)$ has discriminant $3^6$ and has the real subfield of the field of $9$-th roots of unity as its splitting field. \medskip

The $12$-th degree factor $p(z)$ of $R_2(z)$ has discriminant $D = 2^{48}3^{30}5^6 7^6$ and factors over $K = \mathbb{Q}(\sqrt{-35})$ as
\begin{align*}
p(z) &= (z^6 - 12z^5 + (4a + 29)z^4 + (-8a - 8)z^3 + (4a - 16)z^2 + 6z+1)\\
& \ \ \ \times ( z^6 - 12z^5 + (-4a + 29)z^4 + (8a - 8)z^3 + (-4a - 16)z^2 + 6z+1),
\end{align*}
where $a = \sqrt{-35}$ and
\begin{align*}
\textrm{disc}_z(z^6 - 12z^5& + (4a + 29)z^4 + (-8a - 8)z^3 + (4a - 16)z^2 + 6z+1)\\
 & = 2^6(1959980a + 8452093) = -2^6\left(\frac{1-\sqrt{-35}}{2}\right)^{15}.
 \end{align*}
In fact, this sextic factors over the Hilbert class field $\Sigma = \mathbb{Q}(\sqrt{5},\sqrt{-7})$ of $K$ as
\begin{align*}
z^6 - &12z^5 + (4a + 29)z^4 + (-8a - 8)z^3 + (4a - 16)z^2 + 6z+1\\
& = \frac{1}{25} (5z^3 + (ab - 10b - 30)z^2 + (-ab + 10b + 15)z+5)\\
& \ \ \ \times (5z^3 + (-ab + 10b - 30)z^2 + (ab - 10b + 15)z+5),
\end{align*}
where $b = \sqrt{5}$.  The discriminant of the first cubic in this factorization is
\begin{align*}
\textrm{disc}&\left(\frac{1}{5}(5z^3 + (ab - 10b - 30)z^2 + (-ab + 10b + 15)z+5)\right)\\
 &= (-288b - 644)a + 936b + 2093,
 \end{align*}
where
$$N_K((-288b - 644)a + 936b + 2093) = -104a-391 = \left(\frac{1-\sqrt{-35}}{2}\right)^6.$$

\begin{prop}
A root of the first factor of the polynomial $p(z)$ generates the ray class field $\Sigma_{\wp_3'^2}$ over $K= \mathbb{Q}(\sqrt{-35})$, where $\wp_3'^2 = \left(\frac{1-\sqrt{-35}}{2}\right)$.
\label{prop:3}
\end{prop}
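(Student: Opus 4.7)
The plan is to verify $K(\theta) = \Sigma_{\wp_3'^2}$ by showing both fields have degree $6$ over $K$ and that $K(\theta)/K$ is abelian with conductor $\wp_3'^2$.

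I first compute the degree of the ray class field. For $K = \mathbb{Q}(\sqrt{-35})$ with $h_K = 2$, $U_K = \{\pm 1\}$, and $N\wp_3' = 3$, the ray class number formula yields
\[
[\Sigma_{\wp_3'^2}:K] = \frac{h_K \cdot |(R_K/\wp_3'^2)^\ast|}{[U_K:U_K^{\wp_3'^2}]} = \frac{2 \cdot 6}{2} = 6,
\]
using $|(R_K/\wp_3'^2)^\ast| = 9(1-1/3) = 6$ and $-1 \not\equiv 1 \pmod{\wp_3'^2}$.

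Next I establish $[K(\theta):K] = 6$ and $\Sigma \subset K(\theta)$. Over $\Sigma = K(\sqrt 5)$ the sextic $q(z)$ factors as $c(z) c'(z)$, with the two cubic factors interchanged by the nontrivial element of $\mathrm{Gal}(\Sigma/K)$ (sending $\sqrt 5 \mapsto -\sqrt 5$). After verifying that $c(z)$ is irreducible over $\Sigma$---for instance by checking it has no root modulo a convenient unramified prime of $\Sigma$---the Galois-conjugate pair $c(z), c'(z)$ cannot combine into a proper $K[z]$-factor of $q(z)$, so $q(z)$ is irreducible over $K$ and $[K(\theta):K] = 6$. Combined with $[\Sigma(\theta):\Sigma] = 3$, this forces $K(\theta) = \Sigma(\theta)$, hence $\Sigma \subset K(\theta)$.

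The heart of the argument is to show $K(\theta)/K$ is abelian with conductor dividing $\wp_3'^2$. Since $[K(\theta):\Sigma] = 3$ is prime and $[\Sigma:K] = 2$ is coprime to $3$, $K(\theta)/K$ will be abelian (cyclic of order $6$) provided $K(\theta)/\Sigma$ is cyclic, equivalently that $\mathrm{disc}(c(z)) = \delta = (-288\sqrt 5 - 644)\sqrt{-35} + 936\sqrt 5 + 2093$ is a square in $\Sigma$. Given that $N_{\Sigma/K}(\delta) = \left(\tfrac{1-\sqrt{-35}}{2}\right)^6$ is a perfect sixth power in $K$, I would produce an explicit $\beta \in \Sigma$ with $\beta^2 = \delta$ by direct computation. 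For the conductor, the norm of $\delta$ shows that $K(\theta)/K$ is unramified outside $\wp_3'$; at $\wp_3'$, the completion is a totally ramified cyclic cubic extension of $K_{\wp_3'} \cong \mathbb{Q}_3$, whose minimal conductor exponent is $2$ since tame ramification is impossible with $e = p = 3$. Class field theory then gives $K(\theta) \subseteq \Sigma_{\wp_3'^2}$, and the degree count yields equality.

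The main obstacle is verifying that $\delta$ is a square in $\Sigma$---this is what distinguishes the cyclic case from the possibility that $K(\theta)/K$ has an $S_3$-type Galois closure. A cleaner alternative would invoke Theorem \ref{thm:2}, proved later in the paper, which identifies the field generated over $K$ by a periodic point of $\hat F(z)$ with a class field of the form $\Sigma_{\wp_3'^2}\Omega_f$; Proposition \ref{prop:3} then follows as the special case $f = 1$.
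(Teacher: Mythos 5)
Your outline (match degrees, verify abelianness, pin down the conductor) is reasonable, but the step on which everything hinges does not work as stated. You claim that $K(\theta)/K$ is abelian ``provided $K(\theta)/\Sigma$ is cyclic,'' because $[K(\theta):\Sigma]=3$ and $[\Sigma:K]=2$ are coprime. This is a non sequitur: a degree-$6$ field $L \supset \Sigma \supset K$ with $L/\Sigma$ cyclic cubic and $\Sigma/K$ quadratic can perfectly well be non-abelian, or even non-Galois, over $K$ --- the standard example being the Hilbert class field of $\mathbb{Q}(\sqrt{-23})$, which is cyclic cubic over $\mathbb{Q}(\sqrt{-23})$ but has group $S_3$ over $\mathbb{Q}$. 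Showing that $\mathrm{disc}(c)$ is a square in $\Sigma$ only makes $\Sigma(\theta)/\Sigma$ Galois; it says nothing about whether the roots of the conjugate cubic $c'$ lie in $\Sigma(\theta)$, nor about whether a lift of the nontrivial element of $\mathrm{Gal}(\Sigma/K)$ commutes with the order-$3$ automorphisms. Until abelianness over $K$ is established you cannot invoke local class field theory at $\wp_3'$ to bound the conductor at all. Two smaller points: the verification that $\delta$ is actually a square in $\Sigma$ is left as a to-do (the fact that $N_{\Sigma/K}(\delta)$ is a sixth power certainly does not imply it), and the local extension at $\wp_3'$ is not a totally ramified cubic of $\mathbb{Q}_3$: since $\wp_3'$ has norm $3$ and $x^2+35y^2=12$ has no integral solution, $\wp_3'$ is non-principal, hence inert in $\Sigma/K$, so the completion of $K(\theta)$ at $\wp_3'$ has residue degree $2$ and ramification index $3$. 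The conductor exponent $2$ survives this correction, but only once abelianness is known.

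The paper performs none of this computation: its entire proof of Proposition \ref{prop:3} is the observation you make in your final sentence, namely that the statement is the case $f=1$ of Theorem \ref{thm:2}, whose proof (via the Deuring normal form, the Weber function, and the cubic Fermat solutions of \cite{m1}) places $\Omega_f(\xi)$ inside $\Sigma_9\Omega_f$ and then identifies the conductor as $\wp_3'^2(f_0)$. That route supplies exactly the abelianness your direct argument is missing; what you present as a ``cleaner alternative'' is in fact the intended, and as written the only complete, proof.
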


This fact is a corollary of Theorem \ref{thm:2}, which is proved in the next section.

\section{Fields generated by the periodic points.}

To determine the number fields which are generated by the periodic points of $\hat F(z)$, we adapt the following argument from \cite{m3}. \medskip

Consider the elliptic curve in Deuring normal form
$$E_3(\alpha): \ Y^2+\alpha XY + Y = X^3, \ \ 3(0,0) = O \ (\textrm{base point}),$$
whose $j$-invariant is
$$j(E_3) = \frac{\alpha^3(\alpha^3-24)^3}{\alpha^3-27}.$$
By Proposition 3.6(ii) of \cite{m0} and the remark thereafter, a point $P=(\xi,\eta)$ on $E_3(\alpha)$ satisfies $3P = \pm(0,0)$ whenever its $X$-coordinate satisfies
$$k(x) = h(\alpha, x) = x^3 -(3+\alpha)x^2 +\alpha x+1 = 0.$$
This equation implies the relation 
$$\alpha = \frac{\xi^3-3\xi^2+1}{\xi(\xi-1)}.$$
Since the point $(0,0)$ has order $3$, the point $P= (\xi,\eta)$ is a point of order $9$ on $E_3(\alpha)$. \medskip

Now let $(\alpha, \beta)$ be a solution of
$$\textrm{Fer}_3: \ 27X^3+27Y^3 = X^3 Y^3, $$
as in \cite[Thm. 4.2]{m1}.  Then $\alpha \in \Omega_f$, the ring class field of ring conductor $f$ over the imaginary quadratic field
$$K = \mathbb{Q}(\sqrt{-d}), \ \ -d = d_K f^2 \equiv 1 \ (\textrm{mod} \ 3).$$
Let $(3) = \wp_3 \wp_3'$ in the ring of integers $R_K$ of $K$.  Then $\Omega_f = \mathbb{Q}(\alpha)$ and
$$\alpha-3 \cong \wp_3'^3, \ \ (\alpha) = \wp_3' \mathfrak{a}, \ \textrm{with} \ (\mathfrak{a},3) = 1.$$
(We use Hasse's notation $\cong$ to denote equality of divisors.)  In addition, $\beta = \frac{3(\alpha^\tau+6)}{\alpha-3}$ is a conjugate of $\alpha$ over $\mathbb{Q}$, where $\tau$ is the automorphism
$$\tau = \left(\frac{\Omega_f/K}{\wp_3}\right) \in \textrm{Gal}(\Omega_f/K).$$
We have the discriminant formula
$$\textrm{disc}(x^3 -(3+\alpha)x^2 +\alpha x+1) = (\alpha^2+3\alpha+9)^2.$$
Hence, this discriminant is relatively prime to $\wp_3$.  Furthermore,
$$(\alpha^2+3\alpha+9)(\alpha-3) = \alpha^3-27 = \frac{27\alpha^3}{\beta^3} \cong \wp_3'^6 \mathfrak{c},$$
for some integral ideal $\mathfrak{c}$ prime to $(3)$.  In fact $\mathfrak{c} = (1)$, since
$$(\alpha^3-27)(\beta^3-27) = \alpha^3 \beta^3-27\alpha^3-27\beta^3 + 27^2 = 3^6.$$
Thus, it is clear that $(\alpha^2+3\alpha+9) \cong \wp_3'^3$ and the discriminant of the polynomial $k(x)$ over $\Omega_f$ has the divisor $\wp_3'^6$. \medskip

\newtheorem{lem}{Lemma}

\begin{lem}
With $\alpha$ and $\Omega_f$ as above, the polynomial $k(x) = x^3 -(3+\alpha)x^2 +\alpha x+1$ is irreducible over $\Omega_f$.
\label{lem:0}
\end{lem}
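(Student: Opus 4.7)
The plan is to prove irreducibility of $k(x)$ over $\Omega_f$ by a local Newton polygon argument at a prime $\mathfrak{P}$ of $\Omega_f$ above $\wp_3'$. First, the hypothesis $-d = d_K f^2 \equiv 1 \pmod{3}$ forces $3 \nmid f$, since otherwise $d_K f^2 \equiv 0 \pmod{3}$. Consequently, $\wp_3'$ is unramified in $\Omega_f / K$, and for any prime $\mathfrak{P}$ of $\Omega_f$ above $\wp_3'$ the valuation $v_\mathfrak{P}$ extends $v_{\wp_3'}$ on $K$; in particular $v_\mathfrak{P}(3) = 1$. From the divisor identities $(\alpha) = \wp_3' \mathfrak{a}$ with $(\mathfrak{a}, 3) = 1$ and $\alpha - 3 \cong \wp_3'^3$ established in the preceding discussion, one obtains $v_\mathfrak{P}(\alpha) = 1$ and $v_\mathfrak{P}(\alpha - 3) = 3$.

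Next, I would pass to the completion $\Omega_{f, \mathfrak{P}}$ and write $\alpha - 3 = u\pi^3$ for a uniformizer $\pi$ and a unit $u$. Reducing $k(x)$ modulo $\mathfrak{P}$ gives $k(x) \equiv x^3 + 1 \equiv (x + 1)^3 \pmod{\mathfrak{P}}$, since the residue characteristic is $3$, so any root of $k(x)$ in the completion must be congruent to $-1$ modulo $\mathfrak{P}$. Substituting $x = y - 1$ and expanding yields
\begin{equation*}
k(y - 1) = y^3 - (9 + u\pi^3)\, y^2 + 3(6 + u\pi^3)\, y - (9 + 2u\pi^3).
\end{equation*}

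The Newton polygon of $k(y - 1)$ with respect to $v_\mathfrak{P}$ then resolves the question. Using $v_\mathfrak{P}(9) = 2$, $v_\mathfrak{P}(3(6 + u\pi^3)) = 1 + 1 = 2$, and $v_\mathfrak{P}(9 + 2u\pi^3) = 2$, the valuations of the coefficients of $y^3, y^2, y^1, y^0$ are $0, 2, 2, 2$, respectively. Hence the Newton polygon is a single segment from $(0, 2)$ to $(3, 0)$ of slope $-2/3$; since this slope has denominator $3$ in lowest terms and $\deg k = 3$, the Newton polygon theorem forces $k(y - 1)$ to be irreducible over $\Omega_{f, \mathfrak{P}}$, and therefore $k(x)$ is irreducible over $\Omega_f$. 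The only real obstacle is pinning down the valuation data in the first paragraph; after that the Newton polygon computation is purely mechanical.
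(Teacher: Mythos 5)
Your proof is correct and is essentially the paper's argument: both establish irreducibility via the Newton polygon at a prime of $\Omega_f$ above $\wp_3'$, using $v(\alpha)=1$ and $v(\alpha-3)=3$ to obtain a single segment from $(0,2)$ to $(3,0)$ of slope $-2/3$. The only difference is cosmetic — you shift by $x = y-1$ (so the root sits at the origin mod $\mathfrak{P}$), whereas the paper uses the depressed-cubic shift $x \mapsto x + \tfrac{\alpha}{3}+1$; the resulting polygons and conclusions are identical.
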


\begin{proof}
This may be shown using the Newton polygon for the shifted polynomial
\begin{equation}
\label{eqn:4.1} k\left(x+\frac{\alpha}{3}+1\right) = x^3 -\frac{\alpha^2 +3\alpha +9}{3}x -\frac{(2\alpha + 3)(\alpha^2 + 3\alpha + 9)}{27},
\end{equation}
for a prime divisor $\mathfrak{p}$ of $\wp_3'$ in $\Omega_f$.  I claim that the additive valuation $w_\mathfrak{p}$ of the last two coefficients is $2$.  For the coefficient of $x$, this follows from the above remarks, since
$$w_\mathfrak{p}\left(\frac{\alpha^2 +3\alpha +9}{3}\right) = 3-1 = 2.$$
For the constant term, use that $\alpha = 3+\gamma^3$, with $\gamma \cong \wp_3'$, by \cite[Thm. 3.4]{m1}, to obtain
$$w_\mathfrak{p}\left(\frac{(2\alpha + 3)(\alpha^2 +3\alpha +9)}{27}\right) = w_\mathfrak{p}(2\alpha + 3) = w_\mathfrak{p}(9+2\gamma^3) = 2.$$
It follows that the Newton polygon for the polynomial in \eqref{eqn:4.1} is the line segment joining the points $(0,2)$ and $(3,0)$, since $(1,2)$ and $(2,\infty)$ lie above this line segment.  The slope of this segment is $-2/3$, which implies the irreducibility of $k(x)$ over the completion $\Omega_{f,\mathfrak{p}}$.  (See \cite[pp. 80-81]{vdw} or \cite[pp. 87-88]{vdw1}.)
\end{proof}

The irreducible polynomial $k(x) = x^3 -(3+\alpha)x^2 +\alpha x+1$ has a square discriminant over $\Omega_f$, so its root $\xi$ generates a cyclic cubic extension of $\Omega_f$, and its conjugates over $\Omega_f$ are $\frac{1}{1-\xi}$ and $\frac{\xi-1}{\xi}$.  Also, since the invariants $g_2, g_3$ and $\Delta$ for the curve $E_3(\alpha)$ are
\begin{align*}
g_2 = \frac{1}{12}(\alpha^4-&24\alpha), \ g_3 = \frac{-1}{216}(\alpha^6-36\alpha^3+216),\\
& \Delta = g_2^3-27g_3^2 = \alpha^3-27,
\end{align*}
they lie in $\Omega_f$ (see \cite[p. 18]{m3}).  Now the fact that the Weber function \cite[p. 135]{si}
\begin{align*}
h(\xi,\eta) & = -2^7 3^5 \frac{g_2 g_3}{\Delta}\left(\xi+\frac{\alpha^2}{12}\right)\\
& = \frac{\alpha (\alpha^3-24)(\alpha^6-36\alpha^3+216)}{\alpha^3-27} (\alpha^2+12\xi)
\end{align*}
lies in $\Sigma_{9}\Omega_f$ (see \cite[p. 1978]{lm} or \cite[Satz 2]{fr}) implies that $L = \Omega_f(\xi) \subset \Sigma_{9}\Omega_f$.  Since $\wp_3'^2 || 9$ and $\Sigma_{\wp_3'} = \Sigma$, it follows that the $3$-part of the conductor of $L/K$ is $\wp_3'^2$.  \medskip

The field $\Omega_f$ has conductor $(f_0)$ over $K = \mathbb{Q}(\sqrt{-d})$, where $f_0 = f$ unless $f = 2$ and $K = \mathbb{Q}(i)$ or $\mathbb{Q}(\sqrt{-3})$, in which case $f_0 = 1$; or $f = 2f'$, with $f'$ odd and $-d/4 \equiv 1$ (mod $8$), in which case $f_0 = f/2$. See Cox \cite[p. 177]{co}.  By the previous paragraph, the field $L$, which is the compositum of $\Omega_f$ and $\Sigma(\xi)$, has conductor $\mathfrak{f} = \wp_3'^2(f_0)$ over $K$, and $\wp_3$ is unramified in $L/K$.  This shows that $L = \Sigma_{\wp_3'^2}\Omega_f$, since $\frac{\varphi_K(\wp_3'^2)}{2} = 3$.  This also shows that $L$ is the inertia field for the prime $\wp_3$ in $\Sigma_{9}\Omega_f/K$, since any intermediate field of $\Sigma_{9}\Omega_f/\Omega_f$ not contained in $L$ must have a conductor which is divisible by $\wp_3$.  \medskip

Now let $\xi$ be a periodic point of $\hat F(z)$ with period $n > 1$, so that $\xi \neq 0, 1$.  Then there are $\xi_0 = \xi, \xi_1, \dots, \xi_{n-1}$ for which
$$f(\xi, \xi_1) = f(\xi_1,\xi_2) = \cdots f(\xi_{n-1},\xi) = 0.$$
Let
$$\alpha_i = \frac{\xi_i^3-3\xi_i^2+1}{\xi_i(\xi_i-1)}, \ 0 \le i \le n-1.$$
Then $f(\xi_i,\xi_{i+1}) = 0$ and the formula \eqref{eqn:2.6} imply that
$$g( \alpha_i, \alpha_{i+1}) = 0, \ 0 \le i \le n-1, \ \alpha_n = \alpha_0.$$
This shows that $\alpha = \alpha_0 = \frac{\xi^3-3\xi^2+1}{\xi(\xi-1)}$ is a periodic point for the algebraic function defined by $g(x,y) = 0$.  Furthermore, $\alpha \neq 3$, since
$$\alpha - 3 = \frac{\xi^3-3\xi^2+1}{\xi(\xi-1)} - 3 = \frac{\xi^3-6\xi^2+3\xi+1}{\xi(\xi-1)}$$
and the roots of $x^3-6x^2+x+1$ have period $1$, by \eqref{eqn:3.1}.  Theorem 2 of \cite{m1} implies that $\alpha$ is the root of some polynomial $p_d(x)$, for which $-d =  d_kf^2 \equiv 1$ (mod $3$) and $\mathbb{Q}(\alpha) = \Omega_f$ is the ring class field for the field $K = \mathbb{Q}(\sqrt{-d})$.  Now the above calculation shows that $\Omega_f(\xi) = \Sigma_{\wp_3'^2}\Omega_f$, if $(\alpha-3) = \wp_3'^3$; and $\Omega_f(\xi) = \Sigma_{\wp_3^2}\Omega_f$, if $(\alpha-3) = \wp_3^3$.  Hence, every periodic point of  $\hat F(z)$ generates a field of the form $\Sigma_{\wp_3'^2}\Omega_f$ or $\Sigma_{\wp_3^2}\Omega_f$ over some quadratic field $K$.  \medskip

Equation \eqref{eqn:2.6} also implies the converse, namely, that every class field of the form $\Sigma_{\wp_3'^2}\Omega_f$ over a quadratic field $K = \mathbb{Q}(\sqrt{-d})$ is generated by a periodic point of $\hat F(z)$.  To show this, let $\alpha$ be a generator of the ring class field $\Omega_f/K$ over $\mathbb{Q}$, as before; and let $\tau_3 = \left(\frac{\Sigma_{\wp_3'^2}\Omega_f/K}{\wp_3}\right)$, so that
$$\tau_3|_{\Omega_f} = \tau =  \left(\frac{\Omega_f/K}{\wp_3}\right).$$
By the results of \cite[Sec. 4]{m1}, we know that
$$g(\alpha,\alpha^\tau) = 0.$$
If $\xi$ is a root of $x^3-(3+\alpha)x^2+\alpha x+1 = 0$, \eqref{eqn:2.6} implies that
\begin{equation}
0 = g(\alpha,\alpha^\tau) = \frac{-1}{\xi(\xi-1)\xi^{\tau_3}(\xi^{\tau_3}-1)} k_1(\xi, \xi^{\tau_3}) k_2(\xi, \xi^{\tau_3}) f(\xi, \xi^{\tau_3}).
\label{eqn:4.2}
\end{equation}
But the congruences \eqref{eqn:2.5} for $k_1, k_2$ yield that
$$k_i(\xi,\xi^{\tau_3}) \equiv k_i(\xi, \xi^3) \equiv (\xi+1)^{12} \ (\textrm{mod} \ \wp_3), \ \ i = 1,2.$$
If $\xi \equiv -1$ modulo some prime divisor $\mathfrak{q}$ of $\wp_3$ in $\Sigma_{\wp_3'^2} \Omega_f$, then
$$0 = \xi^3-(3+\alpha)\xi^2+\alpha \xi+1 \equiv -1-(3+\alpha)-\alpha+1 = -3 -2\alpha \ (\textrm{mod} \ \mathfrak{q}).$$
But this implies that $\alpha \equiv 0$ (mod $\mathfrak{q}$), which is impossible, since $(\alpha,\wp_3) = 1$.  Hence, $k_i(\xi,\xi^{\tau_3}) \not \equiv 0$ (mod $\wp_3$), for $i = 1,2$, implying that $k_i(\xi,\xi^{\tau_3}) \neq 0$.  Therefore, \eqref{eqn:4.2} implies that $f(\xi,\xi^{\tau_3}) = 0$.  This gives easily that $\xi$ is a periodic point of $\hat F(z)$. \medskip

We summarize this discussion as follows.

\begin{thm} (a) Every periodic point of the algebraic function $\hat F(z)$ with period $n > 1$ generates a class field of the form $\Sigma_{\wp_3'^2}\Omega_f$ or $\Sigma_{\wp_3^2}\Omega_f$ over a quadratic field $K = \mathbb{Q}(\sqrt{-d})$ in which $-d = d_K f^2$ and $(3) = \wp_3 \wp_3'$ splits. \medskip

(b) Conversely, every class field $\Sigma_{\wp_3'^2}\Omega_f$ over a quadratic field $K = \mathbb{Q}(\sqrt{-d})$, with $-d = d_K f^2 \equiv 1$ (mod $3$), is generated over $\mathbb{Q}$ by a periodic point of $\hat F(z)$. \medskip

(c) The field $\Sigma_{\wp_3'^2}\Omega_f = \mathbb{Q}(\xi)$ is the inertia field for $\wp_3$ in the extended ring class field $\Sigma_9 \Omega_f$.
\label{thm:2}
\end{thm}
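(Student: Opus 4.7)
The plan is to organize the material already developed in the discussion immediately preceding the theorem statement into three logical blocks, one for each part, leaning on Theorem 2 of \cite{m1}, the identity \eqref{eqn:2.6} linking $f(z,w)$ to $g(x,y)$, and the congruences \eqref{eqn:2.5} for $k_1,k_2$.

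For part (a), I would start with a periodic point $\xi$ of $\hat F(z)$ of minimal period $n>1$ together with its orbit $\xi = \xi_0,\xi_1,\dots,\xi_{n-1}$, and set $\alpha_i = (\xi_i^3-3\xi_i^2+1)/(\xi_i(\xi_i-1))$. The identity \eqref{eqn:2.6}, applied along consecutive pairs of the orbit, forces $g(\alpha_i,\alpha_{i+1})=0$ for all $i$, so $\alpha=\alpha_0$ is a periodic point for the algebraic function cut out by $g$. The condition $\xi\ne 0,1$ (which holds because $n>1$, since the roots of $R_1(z)$ listed in \eqref{eqn:3.1} are precisely $0,1$, and the roots of $x^3-6x^2+3x+1$) together with the factorization $\alpha-3=(\xi^3-6\xi^2+3\xi+1)/(\xi(\xi-1))$ yields $\alpha\ne 3$. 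Theorem 2 of \cite{m1} then forces $\mathbb{Q}(\alpha)=\Omega_f$ for some ring class field with $-d=d_Kf^2\equiv 1\pmod{3}$, and the Newton polygon / Weber function discussion preceding the theorem identifies $\Omega_f(\xi)$ as $\Sigma_{\wp_3'^2}\Omega_f$ or $\Sigma_{\wp_3^2}\Omega_f$ according to whether $(\alpha-3)=\wp_3'^3$ or $\wp_3^3$.

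For part (b), starting with $\Sigma_{\wp_3'^2}\Omega_f$, I take $\alpha\in\Omega_f$ as in \cite[Sec.~4]{m1} satisfying $g(\alpha,\alpha^\tau)=0$ with $\tau=(\Omega_f/K,\wp_3)$, and let $\xi$ be a root of the irreducible cubic $x^3-(3+\alpha)x^2+\alpha x+1=0$ (irreducibility by Lemma \ref{lem:0}). Applying \eqref{eqn:2.6} at $(\alpha,\alpha^\tau)$ gives
\[
0 \;=\; g(\alpha,\alpha^\tau) \;=\; \frac{-1}{\xi(\xi-1)\xi^{\tau_3}(\xi^{\tau_3}-1)}\,k_1(\xi,\xi^{\tau_3})\,k_2(\xi,\xi^{\tau_3})\,f(\xi,\xi^{\tau_3}),
\]
so one of the three factors must vanish. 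To rule out $k_1$ and $k_2$, I would use the congruences \eqref{eqn:2.5}: modulo any prime divisor of $\wp_3$ above, $\xi^{\tau_3}\equiv\xi^3$, so $k_i(\xi,\xi^{\tau_3})\equiv (\xi+1)^{12}\pmod{\wp_3}$, and a short direct check shows $\xi\equiv -1$ would force $\alpha\equiv 0\pmod{\wp_3}$, contradicting $(\alpha,\wp_3)=1$. Hence $f(\xi,\xi^{\tau_3})=0$. Iterating $\tau_3$ (which has finite order in $\mathrm{Gal}(\Sigma_{\wp_3'^2}\Omega_f/K)$), the chain $\xi,\xi^{\tau_3},\xi^{\tau_3^2},\dots$ closes up and witnesses $\xi$ as a periodic point of $\hat F(z)$. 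Since $\xi$ generates $\Sigma_{\wp_3'^2}\Omega_f$ over $\mathbb{Q}$ by the discussion preceding the theorem, part (b) follows.

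For part (c), the conductor computation already in the text does the work: the Weber function value on $E_3(\alpha)$ lies in $\Sigma_9\Omega_f$, so $L=\Omega_f(\xi)\subseteq\Sigma_9\Omega_f$; the conductor of $L/K$ at $3$ is exactly $\wp_3'^2$ (and $f_0$ away from $3$), so $\wp_3$ is unramified in $L/K$, and any proper intermediate extension $M$ of $L\subsetneq M\subseteq\Sigma_9\Omega_f$ must have its conductor divisible by $\wp_3$, forcing $\wp_3$ to ramify in $M/K$. This characterizes $L=\Sigma_{\wp_3'^2}\Omega_f$ as the inertia field of $\wp_3$ in $\Sigma_9\Omega_f/K$. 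The main technical obstacle across the proof is the elimination of the $k_1,k_2$ factors in \eqref{eqn:4.2}; once that is done, the rest is bookkeeping with ingredients already in hand.
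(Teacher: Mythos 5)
Your proposal is correct and follows essentially the same route as the paper, which itself presents Theorem \ref{thm:2} as a summary of the preceding discussion: part (a) via the substitution $\alpha_i = (\xi_i^3-3\xi_i^2+1)/(\xi_i(\xi_i-1))$ and \eqref{eqn:2.6} together with Theorem 2 of \cite{m1}, part (b) via \eqref{eqn:4.2} and the elimination of the $k_1,k_2$ factors using \eqref{eqn:2.5} and the check that $\xi \equiv -1$ would force $\alpha \equiv 0 \pmod{\wp_3}$, and part (c) via the Weber-function and conductor computation. No substantive differences from the paper's argument.
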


As a corollary of the proof, we see that every periodic point $\xi$ with period greater than $1$ satisfies a polynomial equation $x^3-(3+\alpha)x^2+\alpha x+1 = 0$.  But the quantity $\alpha$ is an algebraic integer, so that $\xi$ must be a unit.  This is certainly also true for the nonzero fixed points.

\begin{cor}
The nonzero periodic points of the function $\hat F(z)$ are units in the abelian extension $\Sigma_{\wp_3'^2}\Omega_f$ over some quadratic field $K = \mathbb{Q}(\sqrt{-d})$, where $-d \equiv 1$ (mod $3$) and $(3) = \wp_3 \wp_3'$ in $R_K$.
\label{cor:2}
\end{cor}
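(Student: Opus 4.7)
The plan is to read off the unit property of $\xi$ directly from the cubic equation it satisfies. By Theorem \ref{thm:2}, a nonzero periodic point $\xi$ of period $n > 1$ generates a field $L = \Sigma_{\wp_3'^2}\Omega_f$ over some $K = \mathbb{Q}(\sqrt{-d})$ with $-d \equiv 1 \ (\textrm{mod} \ 3)$, and the proof of that theorem shows that $\xi$ is a root of
$$k(x) = x^3 - (3+\alpha)x^2 + \alpha x + 1,$$
where $\alpha$ is a generator of the ring class field $\Omega_f$ coming from a solution $(\alpha, \beta)$ of the Fermat cubic $\textrm{Fer}_3$, as in Theorem 4.2 of \cite{m1}.

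The key input I would invoke is that $\alpha$ is an algebraic integer (this is built into the construction in \cite{m1}; indeed $\alpha - 3 \cong \wp_3'^3$ already implies integrality). Consequently $k(x)$ is a monic polynomial with algebraic-integer coefficients and constant term $+1$, so by Vieta's formulas the three roots $\xi_1, \xi_2, \xi_3$ of $k(x)$ satisfy $\xi_1 \xi_2 \xi_3 = -1$. Each $\xi_i$ is therefore a unit in the ring of integers of $L$, giving the claim for $n > 1$.

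For the period-one case, I would appeal directly to the factorization \eqref{eqn:3.1}: the nonzero fixed points of $\hat F(z)$ are either $z = 1$, which is trivially a unit, or roots of the monic integer cubic $z^3 - 6z^2 + 3z + 1$, whose roots multiply to $-1$ and are therefore units in their field of definition (the real subfield of $\mathbb{Q}(\zeta_9)$). I do not foresee any real obstacle here: once Theorem \ref{thm:2} is in hand, the corollary is a bookkeeping consequence of the integrality of $\alpha$ together with the observation that $k(0) = 1$.
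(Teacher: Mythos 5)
Your argument is correct and is essentially the paper's own proof: the text immediately preceding the corollary derives the unit property exactly from the fact that $\xi$ satisfies the monic integral cubic $x^3-(3+\alpha)x^2+\alpha x+1$ with constant term $1$ and $\alpha$ an algebraic integer, with the fixed-point case noted separately. Your explicit treatment of the fixed points via \eqref{eqn:3.1} just fills in the detail the paper dismisses with ``this is certainly also true for the nonzero fixed points.''
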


\section{The $3$-adic function $F(z)$.}

The function $F(z)$ in \eqref{eqn:2.2} has the following $3$-adic expansion.  Note first that
$$r(z) = z^3-6z^2+3z+1 = (z+1)^3-9z^2.$$
Hence,
$$r(z)^{1/3} = \left((z+1)^3-9z^2\right)^{1/3} = (z+1)\left(1-\frac{9z^2}{(z+1)^3}\right)^{1/3}.$$
Thus, from \eqref{eqn:2.2} we have that
\begin{align*}
F(z) &= \frac{1}{3}(z^2 - 4z + 1)r(z)^{1/3} + \frac{1}{3}(z - 2)r(z)^{2/3} + \frac{1}{3}(z^3 - 6z^2 + 6z + 1)\\
&= \frac{1}{3}(z^2 - 4z + 1)(z+1)\left(1-\frac{9z^2}{(z+1)^3}\right)^{1/3} \\
& \ \ \ + \frac{1}{3}(z - 2)(z+1)^2\left(1-\frac{9z^2}{(z+1)^3}\right)^{2/3} + \frac{1}{3}(z^3 - 6z^2 + 6z + 1);
\end{align*}
and using Newton's binomial series gives
\begin{align}
\label{eqn:5.3} F(z) &= \frac{1}{3}(z^2 - 4z + 1)(z+1) \sum_{n=0}^\infty{\binom{1/3}{n} (-1)^n 3^{2n} \frac{z^{2n}}{(z+1)^{3n}}}\\
\label{eqn:5.4} & \ \ \ + \frac{1}{3}(z - 2)(z+1)^2 \sum_{n=0}^\infty{\binom{2/3}{n} (-1)^n 3^{2n} \frac{z^{2n}}{(z+1)^{3n}}}\\
\notag & \ \ \ + \frac{1}{3}(z^3 - 6z^2 + 6z + 1).
\end{align}
The series in this formula converge for $z \not \equiv -1$ (mod $3$) in $\textsf{K}_3$, the maximal unramified, algebraic extension of the $3$-adic field $\mathbb{Q}_3$, since
$$3^{n+\lfloor n/2\rfloor} \binom{1/3}{n} \in \mathbb{Z}_3, \ \ 3^{n+\lfloor n/2\rfloor} \binom{2/3}{n} \in \mathbb{Z}_3,$$
where $\mathbb{Z}_3$ denotes the ring of $3$-adic integers in $\mathbb{Q}_3$.  See \cite[Eq. (5.2)]{am2}.  Approximating the series in \eqref{eqn:5.3}, \eqref{eqn:5.4} using the first two terms yields that
\begin{align*}
F(z) &\equiv  \frac{1}{3}(z^2 - 4z + 1)(z+1)\left(1-\frac{3z^2}{(z+1)^3}\right) +  \frac{1}{3}(z - 2)(z+1)^2 \left(1-\frac{6z^2}{(z+1)^3}\right)\\
& \ \ \ + \frac{1}{3}(z^3 - 6z^2 + 6z + 1) \ (\textrm{mod} \ 3)\\
&\equiv z^3 - \frac{6z^4}{(z+1)^2} \ (\textrm{mod} \ 3).
\end{align*}
Thus,
\begin{equation}
F(z) \equiv z^3 \ (\textrm{mod} \ 3), \ \ z \ \textrm{integral}, z \not \equiv -1 \ (\textrm{mod} \ 3),
\label{eqn:5.5}
\end{equation}
and $F(z)$ is a lift of the Frobenius automorphism on the set
$$\textsf{D} = \{z \in \mathfrak{o}_3: z \not \equiv -1 \ (\textrm{mod} \ 3)\},$$
where $\mathfrak{o}_3 \subset \textsf{K}_3$ is the valuation ring in $\textsf{K}_3$. \medskip

\begin{lem} 
For $z \in \textsf{D}$, $w = F(z)$ is the only solution of $f(z,w) = 0$ lying in $\textsf{K}_3$.
\label{lem:1}
\end{lem}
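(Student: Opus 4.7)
The plan is to use Hensel's lemma to split off the linear factor $w - F(z)$ from $f(z,w)$, and then use a Newton polygon argument to show that the remaining quadratic factor has no root in $\textsf{K}_3$.

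By Proposition \ref{prop:1}, $f(z,w) \equiv (w - z^3)(w + 1)^2 \pmod 3$. For $z \in \textsf{D}$ the condition $z \not\equiv -1 \pmod 3$, together with the characteristic-$3$ identity $z^3 + 1 \equiv (z+1)^3 \pmod 3$, gives $z^3 \not\equiv -1 \pmod 3$; hence the two factors $w - z^3$ and $(w+1)^2$ are coprime in $\overline{\mathbb{F}_3}[w]$. Hensel's lemma over $\mathfrak{o}_3$ then produces a unique factorization
$$ f(z,w) = (w - F(z)) \cdot g_2(z,w) $$
in $\mathfrak{o}_3[w]$, with $g_2(z,w)$ monic quadratic, $g_2(z,w) \equiv (w+1)^2 \pmod 3$, and $F(z) \in \mathfrak{o}_3$ satisfying $F(z) \equiv z^3 \pmod 3$. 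Uniqueness in Hensel's lemma combined with \eqref{eqn:5.5} identifies this $F(z)$ with the $3$-adic series from \eqref{eqn:5.3}--\eqref{eqn:5.4}.

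To rule out roots of $g_2$ in $\textsf{K}_3$, substitute $w = -1 + u$ to obtain $g_2(z, -1 + u) = u^2 + A(z) u + D(z)$, with $A(z), D(z) \in 3\mathfrak{o}_3$. The crucial step is to show $v_3(D(z)) = 1$ for every $z \in \textsf{D}$. Setting $w = -1$ in the Hensel factorization gives
$$ f(z, -1) = -(1 + F(z)) \cdot g_2(z,-1) = -(1+F(z)) \cdot D(z), $$
while direct substitution yields $f(z,-1) = -3(z^3 - 3z^2 + 3z + 1)$. Since $z^3 - 3z^2 + 3z + 1 \equiv z+1 \pmod 3$ is a unit for $z \in \textsf{D}$, we have $v_3(f(z,-1)) = 1$. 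Moreover $1 + F(z) \equiv 1 + z^3 \equiv (z+1)^3 \pmod 3$ is also a unit, so $v_3(D(z)) = 1$.

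The Newton polygon of $u^2 + Au + D$ has vertices $(0, v_3(D)), (1, v_3(A)), (2, 0)$ with $v_3(D) = 1$ and $v_3(A) \ge 1$, so its lower convex hull is the single segment from $(0,1)$ to $(2,0)$ of slope $-\tfrac12$. Both roots of this quadratic therefore have $3$-adic valuation $\tfrac12$. Since $\textsf{K}_3/\mathbb{Q}_3$ is unramified, its value group is $\mathbb{Z}$, so no such root can lie in $\textsf{K}_3$; hence neither root of $g_2(z,w) = 0$ lies in $\textsf{K}_3$, and $w = F(z)$ is the only root of $f(z,w) = 0$ in $\textsf{K}_3$. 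The main obstacle is pinning down the exact valuation $v_3(D(z)) = 1$, which is handled transparently by the identity $f(z,-1) = -3(z^3-3z^2+3z+1)$ together with the residue-field observation that $z+1$ is a unit for $z \in \textsf{D}$.
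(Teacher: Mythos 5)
Your proof is correct, but it takes a genuinely different route from the paper's. The paper argues via the discriminant: it computes $\mathrm{disc}_w(f(z,w)) = -3(z^3-3z^2+1)^2(z^3-6z^2+3z+1)^2$, observes that the two cubic factors have no roots in $\textsf{K}_3$ (each generates $\mathbb{Q}(\zeta_9)^+$, which is totally ramified at $3$), so the discriminant is $-3$ times a nonzero square and hence a nonsquare in $\textsf{K}_3$ because $\sqrt{-3}\notin\textsf{K}_3$; since a cubic with two roots in a field has its third root there as well, at most one root lies in $\textsf{K}_3$, and that root is $F(z)$. Your argument instead Hensel-factors $f(z,w)$ off the coprime mod-$3$ factorization $(w-z^3)(w+1)^2$ and determines the Newton polygon of the quadratic cofactor by evaluating the identity $f(z,-1)=-3(z^3-3z^2+3z+1)$, concluding that the two remaining roots satisfy $v_3(w+1)=\tfrac12$ and so lie in a ramified quadratic extension. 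Both arguments are sound and both use $z\not\equiv -1$ in an essential way (you for coprimality of the factors and for the unit computations, the paper implicitly through the existence of the branch $F(z)$ on $\textsf{D}$). The paper's proof is shorter; yours is more informative, since it locates the rejected branches precisely (valuation $\tfrac12$ rather than mere nonmembership) and re-derives $F(z)\equiv z^3 \pmod 3$ from the factorization rather than from the binomial series. The only point worth making explicit in your write-up is that Hensel's lemma is applicable because $\textsf{K}_3$ is Henselian (or, equivalently, because any given $z\in\textsf{D}$ lies in a finite, hence complete, unramified extension of $\mathbb{Q}_3$); with that remark added, your proof stands on its own.
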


\begin{proof}
For $z \in \textsf{D}$, we know that $w=F(z)$ is one of the roots in $\textsf{K}_3$ of $f(z,w) = 0$.  If all three roots of this equation were in $\textsf{K}_3$, then the discriminant of the cubic $f(z,w)$ would be a square in $\textsf{K}_3$.  But this discriminant is
$$D = \textrm{disc}_w(f(z,w)) = -3(z^3 - 3z^2 + 1)^2 (z^3 - 6z^2 + 3z + 1)^2.$$
A root of either polynomial $z^3-3z^2+1$ or $z^3 - 6z^2 + 3z + 1$ generates the real subfield of the field of $9$-th roots of unity over $\mathbb{Q}$, which is totally ramified at $p = 3$, so these polynomials have no roots in $\textsf{K}_3$.  It follows that $D \neq 0$ is not a square in $\textsf{K}_3$, since $\sqrt{-3} \notin \textsf{K}_3$.  This proves the lemma.
\end{proof}

From the proof of Theorem \ref{thm:2}, we know that for any periodic point $\xi$ satisfying $\alpha = \frac{\xi^3-3\xi^2+1}{\xi^2-\xi}$, we have
$$f(\xi, \xi^{\tau_3}) = 0.$$
Since $\xi \in L = \Sigma_{\wp_3'^2} \Omega_f$, $\xi$ lies in the completion $L_\frak{p} \subset \textsf{K}_3$, for any prime divisor $\mathfrak{p}$ of $\wp_3$, since $\mathfrak{p}$ is not ramified in $L/\mathbb{Q}$.  Now $\xi \not \equiv -1$ implies by Lemma \ref{lem:1} that
$$\xi^{\tau_3} = F(\xi).$$
Furthermore, $F(\xi) \equiv \xi^3\not \equiv -1$ (mod $3$) in $\textsf{K}_3$, since $\xi^3+1 \equiv (\xi+1)^3$ (mod $3$).  Thus $F: \textsf{D} \rightarrow \textsf{D}$ and
$$\xi^{\tau_3^n} = F^n(\xi) \ \textrm{in} \ \textsf{K}_3.$$
This implies that $n = \textrm{ord}(\tau_3)$ is the minimal period of $\xi$ with respect to the map $F(z)$, since $\mathbb{Q}(\xi) = L$.  This implies further that the minimal period of $\xi$ with respect to the algebraic function $\hat F(z)$ is also $n = \textrm{ord}(\tau_3)$, since the branch $F(z)$ is single-valued on $\textsf{K}_3$.

\begin{lem}
If $\gamma \in \mathbb{Q}(\zeta_9)$ is a root of $x^3-6x^2+3x+1$, then
\begin{equation}
(z-\gamma)^3 (w-\gamma)^3f\left(\frac{\gamma z+1-\gamma}{z-\gamma},\frac{\gamma w+1-\gamma}{w-\gamma}\right) = 3^3(22\gamma^2-13\gamma-4)f(w,z).
\label{eqn:5.6}
\end{equation}
\label{lem:2}
\end{lem}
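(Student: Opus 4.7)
The plan hinges on recognizing $\phi(z) := (\gamma z + 1 - \gamma)/(z - \gamma)$ as a Möbius involution. Indeed, $\phi(\phi(z)) = z$ by a direct computation; the only nonvanishing input needed is $\gamma^2 - \gamma + 1 \neq 0$, which holds because $x^2 - x + 1$ does not divide the irreducible cubic $r(x) = x^3 - 6x^2 + 3x + 1$ in $\mathbb{Q}[x]$. The involution swaps $\gamma$ and $\infty$ on $\mathbb{P}^1$. Since $f(z,w)$ is of bi-degree $(3,3)$, and each of $\phi(z), \phi(w)$ has denominator $(z-\gamma), (w-\gamma)$ respectively, multiplying $f(\phi(z), \phi(w))$ by $(z-\gamma)^3 (w-\gamma)^3$ clears all denominators and produces a polynomial in $(z,w)$ of bi-degree at most $(3,3)$, the same bi-degree as $f(w,z)$.

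To establish proportionality of these two bi-degree-$(3,3)$ polynomials, I would argue structurally: show that the involution $(z,w) \mapsto (\phi(z), \phi(w))$ preserves the curve $f(w,z) = 0$, equivalently that $\phi$ conjugates the algebraic function $\hat F$ to its inverse, $\phi \circ \hat F \circ \phi = \hat F^{-1}$. This curve-invariance can be deduced from the auxiliary polynomial $g(x,y)$ of \eqref{eqn:2.3} via the parameterization $\alpha(z) = (z^3 - 3z^2 + 1)/(z(z-1))$ appearing in \eqref{eqn:2.6}: by computing $\alpha(\phi(z))$ and using that $\gamma$ is a fixed point of $\hat F$ with $f(\gamma, x) = (x-\gamma)^3$ (from \eqref{eqn:5.2}), one sees that $\phi$ intertwines $\hat F$ with the natural symmetry of the cubic Fermat equation $g(x,y) = 0$. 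Once the curve-invariance is known, the ratio $(z-\gamma)^3 (w-\gamma)^3 f(\phi(z), \phi(w))/f(w,z)$ is forced to be a constant in $\mathbb{Q}(\gamma)$.

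Finally, the constant $3^3(22\gamma^2 - 13\gamma - 4)$ is pinned down by evaluating both sides at a convenient specialization, for instance $z = 0$, $w = 2$, and reducing the resulting expression in $\mathbb{Z}[\gamma]/(r(\gamma))$ using $\gamma^3 = 6\gamma^2 - 3\gamma - 1$. The hard step I anticipate is the structural intertwining: a fully conceptual proof that $\phi \circ \hat F \circ \phi = \hat F^{-1}$ requires computing $\alpha(\phi(z))$ and identifying it as a specific companion of $\alpha(z)$ under the $\mathbb{Z}/3\mathbb{Z}$-symmetry of the Fermat curve, which is delicate. In practice one may bypass this by a direct polynomial expansion: compute $(z-\gamma)^3(w-\gamma)^3 f(\phi(z), \phi(w))$ symbolically over $\mathbb{Z}[\gamma]/(r(\gamma))$ and compare the sixteen bi-monomial coefficients in $z^i w^j$ ($0 \le i,j \le 3$) with those of $3^3(22\gamma^2 - 13\gamma - 4) f(w,z)$ — a routine but tedious verification well-suited to a computer algebra system.
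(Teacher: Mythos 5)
The paper's own proof of this lemma is the single sentence ``This follows by a straightforward calculation,'' i.e.\ exactly the direct expansion over $\mathbb{Z}[\gamma]/(r(\gamma))$ that you offer as your fallback; so your proposal, taken as a whole, is correct and its operative step coincides with the paper's. Your preliminary observations are also sound: the matrix of $\phi$ has trace $0$ and determinant $-(\gamma^2-\gamma+1)\neq 0$, so $\phi$ is indeed an involution swapping $\gamma$ and $\infty$, and since $f$ has bidegree $(3,3)$ the cleared expression $(z-\gamma)^3(w-\gamma)^3f(\phi(z),\phi(w))$ is a polynomial of bidegree at most $(3,3)$; once one knows it vanishes on the irreducible curve $f(w,z)=0$, proportionality and then the constant (e.g.\ via $z=0$, $w=2$, where $f(2,0)=-8\neq 0$) follow. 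The one substantive caveat concerns your proposed \emph{structural} route to the curve-invariance: identity \eqref{eqn:2.6} only expresses $g(\alpha(z),\alpha(w))$ as a product $k_1k_2f$ up to a monomial factor, so knowing that $\alpha\circ\phi$ respects the symmetry of $g(x,y)=0$ would only tell you that \emph{one} of $k_1(\phi(z),\phi(w))$, $k_2(\phi(z),\phi(w))$, $f(\phi(z),\phi(w))$ vanishes on the curve $f(w,z)=0$; ruling out the $k_1$ and $k_2$ components (and indeed computing $\alpha(\phi(z))$, which is not automatic from \eqref{eqn:5.2}) is precisely the delicate step you flag, and as sketched it is not closed. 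So the conceptual half of your argument is an attractive but incomplete alternative, while your computational half is a complete proof and is the paper's proof.
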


\begin{proof}
This follows by a straightforward calculation.
\end{proof}

\noindent {\bf Remark.} Note that $\varepsilon= 22\gamma^2-13\gamma-4 = \gamma^2(\gamma-1)^2$ is a unit in $\mathbb{Q}(\gamma) \subset \mathbb{Q}(\zeta_9)$, where $\zeta_9 = e^{2\pi i/9}$.  Also, we have, in terms of $\zeta = \zeta_9$, that the roots of $x^3-6x^2+3x+1$ are
\begin{align*}
\gamma_1 &= \zeta^5 + 2\zeta^4 - \zeta^2 +  \zeta + 2,\\
\gamma_2 &= \frac{1}{1-\gamma_1} = -\gamma_1^2 + 5\gamma_1 + 2 = \zeta^5 - \zeta^4 + 2\zeta^2 - 2\zeta + 2,\\
\gamma_3 &= \frac{\gamma_1-1}{\gamma_1} = \gamma_1^2 - 6\gamma_1 + 4 = -2\zeta^5 - \zeta^4 - \zeta^2 + \zeta + 2.
\end{align*}
\medskip

Let $\sigma, \psi_i$ be the mappings
$$\sigma(z) = \frac{1}{1-z}, \ \ \psi_i(z) = \frac{\gamma_i z+1-\gamma_i}{z-\gamma_i}, \ 1 \le i \le 3.$$
It is not hard to check the following:
\begin{align*}
\psi_1 \circ \sigma(z) &= \frac{\sigma^2(\gamma_1) z+1-\sigma^2(\gamma_1)}{z-\sigma^2(\gamma_1)} = \psi_3(z),\\
\psi_1 \circ \sigma^2(z) &= \frac{\sigma(\gamma_1) z+1-\sigma(\gamma_1)}{z-\sigma(\gamma_1)} = \psi_2(z),\\
\psi_1 \circ \psi_2(z) &= \sigma^2(z), \ \ \psi_2 \circ \psi_1(z) = \sigma(z),\\
\psi_1 \circ \psi_3(z) &= \sigma(z), \ \ \psi_3 \circ \psi_1(z) = \sigma^2(z).
\end{align*}
Here we have used that the roots of $x^3-6x^2+3x+1$ are $\gamma_1, \gamma_2 = \sigma(\gamma_1), \gamma_3 = \sigma^2(\gamma_1)$.  Thus,
$\sigma^3 = 1, \psi_i^2 = 1, \psi_1 \sigma \psi_1 = \sigma^2$, so that
$$\{1, \sigma, \sigma^2, \psi_1, \psi_2, \psi_3\} \cong S_3.$$

\section{Proof of Theorem 1.}

Our main result will follow from Theorem \ref{thm:2}, Proposition \ref{prop:1}, and the Corollary to Proposition \ref{prop:2}.  Let $L$ be the field $L = \Sigma_{\wp_3'^2}\Omega_f$ discussed in Theorem \ref{thm:1}.  Then $[L:\mathbb{Q}]=6h(-d)$ and $L$ is the inertia field for $\wp_3$ in the field $\Sigma_9\Omega_f$, an extended ring class field over $K_d = \mathbb{Q}(\sqrt{-d})$.  As in Section 3, let $\displaystyle \tau_3 = \left(\frac{L/K_d}{\wp_3}\right)$ be the Artin symbol for $\wp_3$ in the extension $L/K_d$.  Now define the set of discriminants
\begin{equation}
\mathfrak{D}_{n,3} = \{-d < 0 \ |  -d \equiv 1 \ (\textrm{mod} \ 3) \ \textrm{and} \ \textrm{ord}(\tau_3) = n \ \textrm{in} \ \textrm{Gal}(L/K_d)\}.
\label{eqn:6.1}
\end{equation}

\begin{thm} If $n \ge 2$, we have the following relation between class numbers of discriminants in the set $\mathfrak{D}_{n,3}$:
\begin{equation}
\sum_{-d \in \mathfrak{D}_{n,3}}{h(-d)} = \frac{1}{3} \sum_{k \mid n}{\mu(n/k) 3^k}.
\label{eqn:6.2}
\end{equation}
\label{thm:3}
\end{thm}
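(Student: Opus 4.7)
The plan is to count the primitive period-$n$ periodic points of $\hat F(z)$ in two independent ways and equate the answers. Introduce the M\"obius-inverted polynomial
\[
\textsf{P}_n(x) = \prod_{k \mid n} R_k(x)^{\mu(n/k)}.
\]
Assuming each $R_k(x)$ has distinct roots, Corollary \ref{cor:1} combined with the identity $\sum_{k \mid n}\mu(n/k) = 0$ for $n \ge 2$ gives
\[
\deg \textsf{P}_n(x) = \sum_{k \mid n}\mu(n/k)(2\cdot 3^k-1) = 2\sum_{k \mid n}\mu(n/k)3^k,
\]
and $\textsf{P}_n(x)$ is then the squarefree polynomial whose roots are exactly the primitive period-$n$ periodic points of $\hat F(z)$.

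The squarefreeness of $R_n(x)$ is the technical core of the proof. By Proposition \ref{prop:1} the mod-$3$ factorization consolidates to $R_n(x) \equiv -(x+1)^{3^n}\prod_{a \in \mathbb{F}_{3^n}\setminus\{-1\}}(x-a) \pmod 3$. Hensel's lemma applied to the separable factor (noting that $(F^n)'(z)-1 \equiv -1 \pmod 3$ on $\textsf{D}$, since $F'(z)\equiv 0 \pmod 3$) produces $3^n-1$ distinct roots in $\textsf{K}_3$, namely the periodic points of the branch $F(z)$ on $\textsf{D}$ of period dividing $n$. For the inseparable cluster $(x+1)^{3^n}$, I would invoke Lemma \ref{lem:2}: since each root $\gamma_i$ of $x^3-6x^2+3x+1$ satisfies $\gamma_i \equiv -1 \pmod 3$ in $\mathbb{Q}(\gamma_i)$, the involution $\psi_i(z) = (\gamma_i z + 1 - \gamma_i)/(z - \gamma_i)$ sends the residue class $-1$ to $\infty$, transporting the cluster away from $-1$ to a region where an analogous mod-$3$ factorization is again separable. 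The identity of Lemma \ref{lem:2} shows $\psi_i$ is compatible with the dynamics of $\hat F(z)$, intertwining $f(z,w)$ with $f(w,z)$, so a Hensel-style argument on the transformed resultant recovers $3^n$ distinct roots of $R_n(x)$ in characteristic zero and completes the squarefreeness argument.

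Once $R_n(x)$ is squarefree, I count the primitive period-$n$ roots of $\textsf{P}_n(x)$ a second way via Theorem \ref{thm:2} and the minimal-period analysis of Section 5. Theorem \ref{thm:2}(a) states that every such root $\xi$ generates $\mathbb{Q}(\xi) = \Sigma_{\wp_3'^2}\Omega_f$ or its $\mathbb{Q}$-conjugate $\Sigma_{\wp_3^2}\Omega_f$ over some imaginary quadratic $K_d$, while Theorem \ref{thm:2}(b) gives the converse. The relation $\xi^{\tau_3} = F(\xi)$ derived in Section 5 (via Lemma \ref{lem:1}) identifies the minimal period of $\xi$ with $\textrm{ord}(\tau_3)$ in $\textrm{Gal}(\Sigma_{\wp_3'^2}\Omega_f/K_d)$, so $\xi$ has primitive period $n$ if and only if $-d \in \mathfrak{D}_{n,3}$. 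For fixed $-d \in \mathfrak{D}_{n,3}$, the quantity $\alpha = (\xi^3-3\xi^2+1)/(\xi(\xi-1)) \in \Omega_f$ has $2h(-d)$ conjugates over $\mathbb{Q}$, each contributing three roots of $k(x) = x^3-(3+\alpha)x^2+\alpha x+1$; these $6h(-d)$ points comprise the single $\mathbb{Q}$-Galois orbit of $\xi$, since $[\mathbb{Q}(\xi):\mathbb{Q}] = 6h(-d)$.

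Summing the orbit sizes across $\mathfrak{D}_{n,3}$ yields
\[
2\sum_{k \mid n}\mu(n/k)3^k = \deg \textsf{P}_n(x) = \sum_{-d \in \mathfrak{D}_{n,3}} 6h(-d),
\]
and dividing by $6$ produces \eqref{eqn:6.2}. The principal obstacle is squarefreeness in the residue class $-1 \pmod 3$: the naive Hensel argument fails on the inseparable factor $(x+1)^{3^n}$, so disentangling its $3^n$ hidden distinct roots via the $\psi_i$-substitution of Lemma \ref{lem:2} is the crux of the $3$-adic analysis.
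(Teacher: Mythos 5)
Your proposal follows the paper's own proof essentially step for step: the same M\"obius product $\textsf{P}_n(x)$, the same degree count from Corollary \ref{cor:1}, Hensel lifting of the separable part of the mod-$3$ factorization from Proposition \ref{prop:1}, the involution of Lemma \ref{lem:2} to account for the $3^n$ roots congruent to $-1$, and the identification of the minimal period with $\mathrm{ord}(\tau_3)$ leading to the factorization of $\textsf{P}_n(x)$ into one irreducible factor of degree $6h(-d)$ per discriminant. The only difference is cosmetic: the paper transports the already-lifted orbits forward under $A(x)=\psi_i(x)$ (landing in the ramified extension $\textsf{K}_3(\gamma)$, where the images satisfy $b\equiv -1 \pmod{\pi}$) and counts $2\cdot 3^n-1$ distinct roots directly, rather than re-running a Hensel argument on a transformed resultant, but the mechanism is identical.
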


\begin{proof}
This proof mirrors the arguments in \cite[pp.822-823]{am1}.  First, define
\begin{equation}
\textsf{P}_n(x) = \prod_{k \mid n}{R_k(x)^{\mu(n/k)}}.
\label{eqn:6.3}
\end{equation}
We show that $\textsf{P}_n(x) \in \mathbb{Z}[x]$.  From Proposition \ref{prop:1} it is clear that $R_n(x)$, for $n > 1$, is divisible (mod $3$) by the $N$ irreducible (monic) polynomials $\bar h_i(x)$ of degree $n$ over $\mathbb{F}_3$,
where
$$N= \frac{1}{n}\sum_{k \mid n}{\mu(n/k)3^k},$$
and that these polynomials are simple factors of $R_n(x)$ (mod $3$).  It follows from Hensel's Lemma that $R_n(x)$ is divisible by distinct irreducible polynomials $h_i(x)$ of degree $n$ over $\mathbb{Z}_3$, the ring of integers in $\mathbb{Q}_3$, for $1 \le i \le N$, with $h_i(x) \equiv \bar h_i(x)$ (mod $3$). In addition, all the roots of $h_i(x)$ are periodic of minimal period $n$ and lie in the unramified extension $\textsf{K}_3$.  Furthermore, $n$ is the smallest index for which $h_i(x) \mid R_n(x)$ over $\mathbb{Q}_3$. \medskip

Now consider the identity
\begin{equation}
(x-\gamma)^2(y-\gamma)^2 f(A(x),A(y)) = 3^3 \gamma^2(\gamma-1)^2 f(y,x),
\label{eqn:6.4}
\end{equation}
where $\displaystyle A(x) = \frac{\gamma x+1-\gamma}{x-\gamma}$, from Lemma \ref{lem:2}.  Note that $\gamma+1$ is a root of the polynomial
$$(x-1)^3-6(x-1)^2+3(x-1)+1 = x^3 - 9x^2 + 18x - 9;$$
substituting $x = \pi^2$ in this polynomial yields that
$$0 = \pi^6 - 9\pi^4 + 18\pi^2 - 9 = (\pi^3 - 3\pi^2 + 3)(\pi^3 + 3\pi^2 - 3).$$
We take $\pi$ to be a root of $x^3 + 3x^2 - 3 = 0$.  Thus, $\pi$ is a prime element in $\textsf{K}_3(\pi) = \textsf{K}_3(\gamma)$; and $\pi^3 \cong 3$ and $\pi^2 = \gamma+1$. \medskip

If the periodic point $a$ of $\hat F(x)$, with minimal period $n > 1$, is a root of one of the polynomials $h_i(x)$, then $a$ is a unit in $\textsf{K}_3$, and for some $a_1, \dots, a_{n-1}$ we have
\begin{equation}
f(a,a_1)=f(a_1,a_2) = \cdots = f(a_{n-1},a)=0.
\label{eqn:6.5}
\end{equation}
Now, $a \not \equiv -1 \ (\textrm{mod} \ \pi)$, since otherwise its reduction $a \equiv \bar a \equiv 1$ (mod $3$) would have degree $1$ over $\mathbb{F}_3$ (using that $\textsf{K}_3$ is unramified over $\mathbb{Q}_3$).  Hence, $a+1$ is a unit in $\textsf{K}_3(\gamma)$.  All of the $a_i$ satisfy $a_i \not \equiv -1$ (mod $\pi$), since the congruence $f(-1,y) \equiv (y+1)^3$ (mod $3$) has only $y \equiv -1$ as a solution.  Hence, if some $a_i \equiv -1$ (mod $\pi$), then $a_j \equiv -1$ for $j >i$, which would imply that $a \equiv -1$ (mod $\pi$), as well.  The elements $b_i=A(a_i)$ are distinct and lie in $\textsf{K}_3(\gamma)$ and satisfy
$$b_i +1 \equiv \frac{(\gamma+1) a_i+1-2\gamma}{a_i-\gamma} = \frac{\pi^2 a_i + 3 - 2\pi^2}{a_i - \gamma} \equiv 0 \ (\textrm{mod} \ \pi),$$
since $a_i - \gamma \equiv a_i + 1 \not \equiv 0$ (mod $\pi$).  The identity (\ref{eqn:6.4}) yields that
\begin{equation}
f(b,b_{n-1})=f(b_{n-1},b_{n-2})= \dots = f(b_1,b)=0
\label{eqn:6.6}
\end{equation}
in $\textsf{K}_3(\gamma)$.  Hence, $b_i \equiv -1$ (mod $\pi$), and the orbit $\{b,b_{n-1},\dots,b_1\}$ is distinct from all the orbits in (\ref{eqn:6.5}). \medskip

Now the map $A(x)$ has order $2$, so it is clear that $b=A(a)$ has minimal period $n$ in (\ref{eqn:6.6}), since otherwise $a=A(b)$ would have period smaller than $n$.  It follows that there are at least $2N$ periodic orbits of minimal period $n>1$.  Noting that
$$R_1(x)= f(x,x) = \ -x(x - 1)(x^3 - 6x^2 + 3x + 1),$$
these distinct orbits and factors account for at least
\begin{equation*}
5 + \sum_{d \mid n,d>1}({2 \sum_{k \mid d}{\mu(d/k)3^k})} = -1+ 2\sum_{d \mid n}({\sum_{k \mid d}{\mu(d/k)3^k})} = 2 \cdot 3^n-1
\end{equation*}
roots, and therefore all the roots, of $R_n(x)$.  This shows that the roots of $R_n(x)$ are distinct and the expressions $\textsf{P}_n(x)$ are polynomials.  Furthermore, over $\textsf{K}_3(\gamma)$ we have the factorization
\begin{equation}
\textsf{P}_n(x) = \pm \prod_{1 \le i \le N}{h_i(x) \tilde h_i(x)}, \ \ n>1,
\label{eqn:6.7}
\end{equation}
where $\tilde h_i(x) = c_i(x-\gamma)^{n}h_i(A(x))$, and the constant $c_i$ is chosen to make $\tilde h_i(x)$ monic. \medskip

For each discriminant $-d \in \mathfrak{D}_{n,3}$, let $f_d(x)$ denote the minimal polynomial of a periodic point $\xi$ of $\hat F(z)$ which generates $\Sigma_{\wp_3'^2} \Omega_f$ over $\mathbb{Q}$ and for which $\alpha = \frac{\xi^3-3\xi^2+1}{\xi(\xi-1)}$ is a root of $p_d(x)$.  Then $f_d(x) \mid \textsf{P}_n(x)$.  Furthermore, every root of $\textsf{P}_n(x)$ is a root of some $f_d(x)$, by Theorem \ref{thm:2}, where $ord(\tau_3) = n$ in order for the roots of $f_d(x)$ to have minimal period $n$.  Finally, for each discriminant $-d \equiv 1$ (mod $3$) there is only one polynomial $f_d(x)$, since there is only one polynomial $p_d(x)$ having $\alpha$ as a root, for each discriminant, by the proof of \cite[Thm. 4.2, p. 879]{m1}.  It follows that
\begin{equation*}
\textsf{P}_n(x) = \pm \prod_{-d \in \mathfrak{D}_{n,3}}{f_d(x)}.
\end{equation*}
Taking degrees on both sides and using (\ref{eqn:6.3}) and $\textrm{deg}(f_d(x)) = [L:\mathbb{Q}] = 6h(-d)$ gives the formula
$$2\sum_{k \mid n}{\mu(n/k) 3^k} = \sum_{-d \in \mathfrak{D}_{n,3}}{6h(-d)}.$$
The formula of the theorem follows.
\end{proof}

The result of Theorem \ref{thm:1} is the analogue of \cite[Thm.1.3]{m2} for the prime $3$ in place of $5$.  The factor $1/3$ in front is to be interpreted as $2/\phi(9)$, replacing the factor $2/\phi(5)$ in the result of \cite{m2}. \medskip

\section{More examples.}

\begin{thm}
The period $n \ge 2$ which corresponds to a given discriminant $-d \equiv 1$ (mod $3$) is the smallest positive integer for which
$$\wp_3^n = \left(\frac{x+y\sqrt{-d}}{2}\right), \ \frac{x+y\sqrt{-d}}{2} \in \mathcal{O} = \textsf{R}_{-d}, \ \textrm{with} \ x \equiv \pm 1 \ (\textrm{mod} \ 9),$$
and $\alpha = \frac{x+y\sqrt{-d}}{2} \equiv \pm 1$ (mod $\wp_3'^2$).
\label{thm:4}
\end{thm}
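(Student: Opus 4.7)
The plan is to translate the condition $\textrm{ord}(\tau_3) = n$ into an ideal-theoretic condition via class field theory, and then convert that into the Diophantine form of the statement. The key step is identifying the ideal group $P_L$ of $L = \Sigma_{\wp_3'^2}\Omega_f$ over $K_d$: since $L$ is the compositum of a ray class field (conductor $\wp_3'^2$) and a ring class field (ring conductor $f$, coprime to $3$ because $-d \equiv 1 \pmod 3$), $P_L$ is the intersection of the two corresponding ideal groups. The unit group $R_K^\times = \{\pm 1\}$ --- this uses $-d \equiv 1 \pmod 3$, which excludes both $K_d = \mathbb{Q}(\sqrt{-3})$ (automatic) and $K_d = \mathbb{Q}(i)$ (since $-4f^2 \not\equiv 1 \pmod 3$) --- and $\mathcal{O}$ is closed under negation, so this intersection collapses to
$$P_L = \{(\alpha) : \alpha \in \mathcal{O}, \ \alpha \equiv \pm 1 \pmod{\wp_3'^2}\}.$$

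By Artin reciprocity, $\tau_3$ has order $n$ in $\textrm{Gal}(L/K_d)$ precisely when $n$ is the smallest positive integer for which $\wp_3^n \in P_L$, i.e., the smallest $n$ for which $\wp_3^n = (\alpha)$ with $\alpha \in \mathcal{O}$ and $\alpha \equiv \pm 1 \pmod{\wp_3'^2}$. Writing such a generator as $\alpha = (x + y\sqrt{-d})/2$, the norm relation $N(\alpha) = 3^n$ gives $x^2 + dy^2 = 4 \cdot 3^n$. To pass from the congruence on $\alpha$ to the congruence on $x$, I would use that complex conjugation exchanges $\wp_3$ and $\wp_3'$, so $(\bar\alpha) = \wp_3'^n$; for $n \ge 2$ this forces $\bar\alpha \in \wp_3'^2$, hence $\alpha = x - \bar\alpha \equiv x \pmod{\wp_3'^2}$. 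A short calculation shows $\wp_3'^2 \cap \mathbb{Z} = 9\mathbb{Z}$, so $\alpha \equiv \pm 1 \pmod{\wp_3'^2}$ is equivalent to $x \equiv \pm 1 \pmod 9$, yielding both conditions in the statement.

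For the converse direction, given $(x,y) \in \mathbb{Z}^2$ with $x^2 + dy^2 = 4 \cdot 3^n$, $x \equiv \pm 1 \pmod 9$, and $\alpha = (x+y\sqrt{-d})/2 \in \mathcal{O}$, one has $N(\alpha) = 3^n$, so $(\alpha) = \wp_3^a\wp_3'^b$ with $a+b = n$; the condition $3 \nmid x$ (from $x \equiv \pm 1 \pmod 9$) forces $\alpha + \bar\alpha = x \notin \wp_3'$, which prevents both $\wp_3$ and $\wp_3'$ from dividing $(\alpha)$, and after possibly replacing $\alpha$ by $\bar\alpha$ we obtain $(\alpha) = \wp_3^n$, at which point the earlier argument applies. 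I expect the main obstacle to be pinning down the precise description of $P_L$ --- verifying that the intersection of the ray-class and ring-class ideal groups takes the clean form above, with the unit ambiguity properly absorbed into the $\pm 1$ sign --- everything after that is straightforward bookkeeping with norms and conjugation.
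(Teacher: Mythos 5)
Your proposal is correct and follows essentially the same route as the paper: Artin reciprocity converts $\mathrm{ord}(\tau_3)=n$ into $\wp_3^n$ lying in the ideal group of $\Sigma_{\wp_3'^2}\Omega_f$ (the paper argues piecewise via the restrictions to $\Omega_f$ and to $\Sigma_{\wp_3'^2}$ rather than writing down the intersected ideal group, but this is the same content), and the congruence $x\equiv\pm1\pmod 9$ is extracted exactly as you do, from $\alpha\equiv x\pmod{\wp_3'^2}$ (the paper writes this as $\sqrt{-d}\equiv x/y$, which is equivalent to your cleaner $\alpha=x-\bar\alpha$ with $\bar\alpha\in\wp_3'^n\subseteq\wp_3'^2$). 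No gaps; your treatment of the converse and of the unit ambiguity matches what the paper leaves implicit.
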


\begin{proof}
If $\tau_3$ has order $n \ge 2$ in $\textrm{Gal}(\Sigma_{\wp_3'^2} \Omega_f/K)$, then $\tau_3^n$ fixes $\Omega_f$, which implies that $\wp_3^n$ is a principal ideal in the ring $\textsf{R}_{-d}$, by the Artin correspondence.  Furthermore, 
$$\wp_3'^n = \left(\frac{x-y\sqrt{-d}}{2}\right),$$
which implies that
$$\sqrt{-d} \equiv \frac{x}{y} \ (\textrm{mod} \ \wp_3'^2).$$
Hence,
$$\alpha=\frac{x+y\sqrt{-d}}{2} \equiv \frac{2x}{2} \equiv x \ (\textrm{mod} \ \wp_3'^2).$$
It follows that $x \equiv \pm 1$ (mod $\wp_3'^2$) and therefore $x \equiv \pm 1$ (mod $9$), since $(\alpha)$ lies in the ideal group corresponding to the class field $\Sigma_{\wp_3'^2}/K$.  The same relations show that $n$ must be the smallest positive integer with this property, since they imply that $\tau_3^n = 1$.
\end{proof}

\noindent {\bf Remark.} In $K = \mathbb{Q}(\sqrt{-11})$ we have $\wp_3 = \left(\frac{1+\sqrt{-11}}{2}\right) = (\alpha)$, but $\alpha \not \equiv \pm 1$ (mod $\wp_3'^2$), since
$$\alpha - 1 = \frac{-1+\sqrt{-11}}{2} \cong \wp_3', \ \ \alpha + 1 = \frac{3 + \sqrt{-11}}{2} \cong \mathfrak{p}_5.$$
Thus, the period corresponding to $-d = -11$ is not $1$, but $n = 3$, since the ray class field of conductor $\wp_3'^2$ is generated over $K$ by a root of the polynomial
$$p(x) = z^6 - 4z^5 + 13z^4 - 16z^3 + 8z^2 - 2z + 1,$$
which is a primitive factor of $R_3(x)$ (it does not divide $R_1(x)$).  There are no class fields of quadratic fields corresponding to fixed points of $\hat F(z)$, since a root of $z^3-6z^2+3z+1$ generates the cyclic cubic extension $\mathbb{Q}(\zeta_9)^+$ of $\mathbb{Q}$.  See \eqref{eqn:3.1}. \medskip

For $n = 2$ we have $\mathfrak{D}_{2,3} = \{-35\}$ and
$$h(-35) = 2 = \frac{1}{3}(3^2-3), \ \ \wp_3^2 = \left(\frac{1+\sqrt{-35}}{2}\right).$$

\noindent For $n = 3$ we have $\mathfrak{D}_{3,3} = \{-8, -11, -44, -107\}$ and
$$h(-8)+h(-11)+h(-44)+h(-107) = 1 + 1 + 3 + 3 = 8 =\frac{1}{3}(3^3-3),$$
where
$$\wp_3^3 = \left(\frac{10+\sqrt{-8}}{2}\right), \ \left(\frac{8+2\sqrt{-11}}{2}\right), \ \left(\frac{1+\sqrt{-107}}{2}\right);$$
and the equality occurs in the respective fields $K = \mathbb{Q}(\sqrt{-d})$. \medskip

\noindent For $n= 4$ we have $\mathfrak{D}_{4,3} = \{-56,-224, -260,-323\}$ and
$$h(-56)+h(-224)+h(-260)+h(-323) = 4 + 8 + 8 + 4 = 24 = \frac{1}{3}(3^4-3^2),$$
with
$$\wp_3^4 = \left(\frac{10+2\sqrt{-56}}{2}\right), \ \left(\frac{8+\sqrt{-260}}{2}\right), \ \left(\frac{1+\sqrt{-323}}{2}\right).$$

\noindent For $n=5$ we have that
$$\mathfrak{D}_{5,3} = \{-47, -188,-227, -683\} \cup \{-296, -611, -872\} \cup \{-908, -971\},$$
with
$$\sum_{-d \in \frak{D}_{5,3}}{h(-d)} = 4\cdot 5 + 3 \cdot 10 + 2 \cdot 15 = 80 = \frac{1}{3}(3^5-3),$$
and
\begin{align*}
\wp_3^5 &= \left(\frac{28+2\sqrt{-47}}{2}\right), \ \left(\frac{28+\sqrt{-188}}{2}\right), \ \left(\frac{8+2\sqrt{-227}}{2}\right), \ \left(\frac{17+\sqrt{-683}}{2}\right),\\
&= \left(\frac{26+\sqrt{-296}}{2}\right), \ \left(\frac{19+\sqrt{-611}}{2}\right), \ \left(\frac{10+\sqrt{-872}}{2}\right),\\
&= \left(\frac{8+\sqrt{-908}}{2}\right), \ \left(\frac{1+\sqrt{-971}}{2}\right).
\end{align*}

\noindent For $n = 6$, there are a total of $18$ discriminants in the set $\mathfrak{D}_{6,3}$.  The sum of the class numbers of discriminants in this set is
$$\sum_{-d \in \frak{D}_{6,3}}{h(-d)} = 2 \cdot 2 + 3 \cdot 6 + 8 \cdot 12 +1 \cdot 18 + 4 \cdot 24 = 232 = \frac{1}{3}(3^6-3^3-3^2+3).$$
The discriminants  corresponding to each class number in the set $\{2,6,12,18,24\}$ are:
\begin{align*}
h = 2 \ &: \ -20, -2^2 \cdot 8;\\
h = 6 \ &: \ -4 \cdot 35, -5^2 \cdot 8, -4^2 \cdot 11;\\
h = 12 \ &: \ -7^2 \cdot 20, -10^2 \cdot 8, -7 \cdot 13 \cdot 17, -4 \cdot 13 \cdot 41, -4^2 \cdot 35,\\
& \ \ -5 \cdot 7 \cdot 73, -37 \cdot 71, -8^2 \cdot 11;\\
h = 18 \ &: \ -19 \cdot 89;\\
h = 24 \ &: \ -8^2 \cdot 35, -16^2 \cdot 11, -4 \cdot 23 \cdot 31, -5 \cdot 11 \cdot 53.
\end{align*}

Consider the discriminant $-d = -68$, which does not appear in the above sums.  In the field $K = \mathbb{Q}(\sqrt{-17})$ we have
\begin{align*}
\wp_3^4 & = (8+\sqrt{-17}), \ &2 \cdot 8 \equiv 7 \ (\textrm{mod} \ 9),\\
\wp_3^8 & = (-2143+1504 \sqrt{-17}), \ &2(-2143) \equiv 7 \ (\textrm{mod} \ 9),\\
\wp_3^{12} & = (-509809+36400\sqrt{-17}), \ &2(-509809) \equiv 1 \ (\textrm{mod} \ 9).
\end{align*}
This shows that the discriminant $-d = -68$, with class number $4$, corresponds to the period $n = 12$. \medskip

To give another example, the smallest power of $\wp_3$ that is principal in $\mathbb{Q}(\sqrt{-89})$ (class number $12$) is
$$\wp_3^{12} = (2^7 \cdot 5+37 \sqrt{-89}),$$
but
$$2^8 \cdot 5 \equiv 2 \ (\textrm{mod} \ 9).$$
Hence, 
$$\wp_3^{36} = (28209280 + 40957483 \sqrt{-89}), \ \ 2(28209280) \equiv 8 \ (\textrm{mod} \ 9),$$
shows that the period $n$ corresponding to the discriminant $-4 \cdot 89$ is $n = 36$.
Note that
$$28209280 = 2^7 \cdot 5 \cdot 11 \cdot 4007, \ \ 40957483 = 7^2 \cdot 19 \cdot 29 \cdot 37 \cdot 41.$$
Thus, the discriminants $-4 \cdot 89 \cdot 7^2$ and $-4 \cdot 89 \cdot 7^4$ are also discriminants in $\mathfrak{D}_{36,3}$, with class numbers
$$h(-4 \cdot 89 \cdot 7^2) = 72, \ \ h(-4 \cdot 89 \cdot 7^4) = 7 \cdot 72 = 504,$$
as are the discriminants $-d = -4 \cdot 89 \cdot f^2$, where $f$ is any nonempty product of distinct prime powers in the set $\{7, 7^2, 19, 29, 37, 41\}$.  The largest of these discriminants has class number
$$h(-4 \cdot 89 \cdot 7^4 \cdot 19^2 \cdot 29^2 \cdot 37^2 \cdot 41^2) =  434367360 = 2^7 \cdot 3^6 \cdot 5 \cdot 7^2 \cdot 19.$$

Another fundamental discriminant corresponding to period $n = 36$ is
$$-4d = -4 \cdot 140054575256989121 = -4 \cdot 17 \cdot 23 \cdot 569 \cdot 1291 \cdot 2797 \cdot 174337,$$
since
$$3^{36} = 2^4 \cdot 5^4 \cdot 7^4 \cdot 11^4 \cdot 13^4 + d, \ \ 2^3 \cdot 5^2 \cdot 7^2 \cdot 11^2 \cdot 13^2 \equiv 8 \ (\textrm{mod} \ 9).$$
By genus theory and the fact that $36 \mid 3h(-4d)$ we have that $h(-4d) \ge 3 \cdot 2^6 = 192$.  But no smaller power of $\wp_3$ is principal in $K = \mathbb{Q}(\sqrt{-d})$, so that $36 \mid h(-4d)$ and $h(-4d) \ge 576$.  In fact, a calculation on Magma shows that the class number is
$$h(-4 \cdot 140054575256989121) = 2^8 \cdot 3^2 \cdot 5 \cdot 73 \cdot 479 = 402819840.$$
This value satisfies
$$\frac{h(-4d)}{2d^{1/2}} = .53818576....$$

\section{Testing Conjecture \ref{conj:1}.}

The following theorem shows why it can be tricky to determine when Conjecture \ref{conj:1} holds for a given prime $p$ and a given integer $n \ge 2$.

\begin{thm} Let $p$ be an odd prime which splits in the field $K= \mathbb{Q}(\sqrt{-d})$, where $-d = d_K f^2$ and $(p,d) = 1$.
\begin{enumerate}[(a)]
\item Assume $f>1$ if $d_K = -3$ or $-4$.  The degree of $L_{\mathcal{O},p} = \Sigma_p \Omega_{pf}$ over $K = \mathbb{Q}(\sqrt{-d})$ is given by
\begin{equation}
[L_{\mathcal{O},p}:K] = h(-d) \frac{(p-1)^2}{2}.
\label{eqn:8.1}
\end{equation}
\item If $d_K \neq -3, -4$ and $(p) = \mathfrak{p} \mathfrak{p}'$ in $R_K$, the degree of $\Sigma_\frak{p}\Omega_f$ over $K$ is
\begin{equation}
[\Sigma_\frak{p}\Omega_f:K] = h(-d) \frac{p-1}{2}.
\label{eqn:8.2}
\end{equation}
and the field $F_{\frak{p}'} = \Sigma_{\frak{p}'}\Omega_f$ is the inertia field of $\mathfrak{p}$ in $L_{\mathcal{O},p}/K$.
\item If $d_K = -3$ or $-4$, $w$ is the number of roots of unity in $K$, and $f > 1$, then the field $F_{\frak{p}'} = \Sigma_{\frak{p}'}\Omega_f$ is contained in the inertia field $\textsf{K}_T$ of $\mathfrak{p}$ in $L_{\mathcal{O},p}/K$, and
$$[\textsf{K}_T: F_{\frak{p}'}] = \frac{w}{2}.$$
If $f=1$, $\textsf{K}_T = F_{\frak{p}'}$.
\item If $d_K = -4$ and $2 \mid f$, then the inertia field of $\mathfrak{p}$ in $L_{\mathcal{O},p}/K$ is $\textsf{K}_T = \Sigma_{(2)\frak{p}'}\Omega_f$.
\item If $d_K = -3$ and $3 \mid f$, then the inertia field of $\mathfrak{p}$ in $L_{\mathcal{O},p}/K$ is $\textsf{K}_T = \Sigma'\Omega_f$, where $\Sigma' = \Sigma_{(3)\frak{p}'} \cap \Sigma_p \Omega_{3p}$ is the unique cubic extension of $\Sigma_{\frak{p}'}$ contained in $\Sigma_{(3)\frak{p}'}$.
\end{enumerate}
\label{thm:5}
\end{thm}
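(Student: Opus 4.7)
The plan is to use the Takagi correspondence between abelian extensions of $K$ and ideal groups in $K$, combined with local conductor--ramification calculations. For part (a), the identification $L_{\mathcal{O},p} = \Sigma_p \Omega_{fp}$ from the introduction reduces the question to a compositum of two known degrees,
$$[\Sigma_p:K] = h_K(p-1)^2/w_p, \qquad [\Omega_{fp}:K] = h(-dp^2) = h(-d)(p-1),$$
where $w_p = [O_K^*:O_K^* \cap U_1((p))]$. The first comes from the ray class number formula and the second from the order class number formula (the hypothesis $f > 1$ in the exceptional cases ensures $[O_K^*:\mathcal{O}_f^*] = [O_K^*:\mathcal{O}_{fp}^*]$, so the ratio is clean). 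When $d_K \neq -3, -4$, Hasse's Satz 3 gives $\Omega_{fp} = \Omega_f\Omega_p$, and since $\Omega_p \subseteq \Sigma_p$ this collapses $L_{\mathcal{O},p}$ to $\Sigma_p\Omega_f$; the conductors $(p)$ and $(f)$ are coprime, so $\Sigma_p \cap \Omega_f = \Sigma$ and the compositum formula yields $h(-d)(p-1)^2/2$. When $d_K \in \{-3,-4\}$ and $f > 1$, $\Omega_{fp}$ properly contains $\Omega_f\Omega_p$, but the compositum $\Sigma_p \cdot \Omega_{fp}$ with intersection $\Omega_p$ (confirmed by matching degrees) still yields the same answer.

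For part (b), the conductor of $\Sigma_p/K$ is exactly $(p) = \mathfrak{p}\mathfrak{p}'$ and the ramification at $\mathfrak{p}$ is tame with index $|(O_K/\mathfrak{p})^*| = p-1$. The unique maximal subfield of $\Sigma_p/K$ whose conductor drops the prime $\mathfrak{p}$ is $\Sigma_{\mathfrak{p}'}$, which is therefore the inertia field at $\mathfrak{p}$ in $\Sigma_p/K$. Since $\Omega_f/K$ is unramified at $\mathfrak{p}$ (its conductor being coprime to $\mathfrak{p}$), the inertia field at $\mathfrak{p}$ in $L_{\mathcal{O},p} = \Sigma_p\Omega_f$ is $\Sigma_{\mathfrak{p}'}\Omega_f = F_{\mathfrak{p}'}$. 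The degree formula $[\Sigma_\mathfrak{p}\Omega_f:K] = h(-d)(p-1)/2$ follows from the analogous compositum argument with $\Sigma_\mathfrak{p} \cap \Omega_f = \Sigma$.

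For parts (c)--(e), a degree count using (a) together with $[F_{\mathfrak{p}'}:K] = h(-d)(p-1)/w$ (obtained as a compositum with intersection $\Sigma = K$) gives $[L_{\mathcal{O},p}:F_{\mathfrak{p}'}] = w(p-1)/2$. The tame inertia at $\mathfrak{p}$ injects into $(O_K/\mathfrak{p})^* \cong \mathbb{F}_p^*$, so it has order at most $p-1$, forcing $[\textsf{K}_T:F_{\mathfrak{p}'}] \geq w/2$. For $f=1$ one has $L_{\mathcal{O},p} = \Sigma_p$, the tame inertia has full order $p-1$, and $\textsf{K}_T = F_{\mathfrak{p}'}$. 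For $f > 1$, verifying that the tame inertia map has trivial kernel (no nontrivial unit of $O_K^*$ is $\equiv 1 \pmod{\mathfrak{p}'(f)}$) yields equality $[\textsf{K}_T:F_{\mathfrak{p}'}] = w/2$. For part (d), the hypothesis $2 \mid f$ implies that any $(\alpha) \in P_{\mathcal{O},p}$ with $\alpha \equiv a \pmod{fp}$ has $a$ odd (since $(a,f)=1$), hence $\alpha \equiv 1 \pmod{(2)\mathfrak{p}'}$; therefore $\Sigma_{(2)\mathfrak{p}'} \subseteq L_{\mathcal{O},p}$, and a local unit calculation $-1 \notin U_1((2)\mathfrak{p}')$ gives $[\Sigma_{(2)\mathfrak{p}'}:\Sigma_{\mathfrak{p}'}] = 2 = w/2$, identifying $\textsf{K}_T = \Sigma_{(2)\mathfrak{p}'}\Omega_f$. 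Part (e) is parallel: $3 \mid f$ shows $\Sigma_{(3)\mathfrak{p}'} \subseteq \Sigma_p\Omega_{3p}$, but now $[\Sigma_{(3)\mathfrak{p}'}:\Sigma_{\mathfrak{p}'}] = 6$ is too large, and one singles out the correct cubic subextension $\Sigma'$ via the intersection with $\Sigma_p\Omega_{3p}$.

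The hardest step will be the computations in (d) and (e): verifying that the candidate fields lie inside $L_{\mathcal{O},p}$ and have exactly the required index $w/2$ over $F_{\mathfrak{p}'}$, and, for (e), singling out the correct cubic subextension within $\Sigma_{(3)\mathfrak{p}'}/\Sigma_{\mathfrak{p}'}$ using the Galois structure $\mathrm{Gal}(\Sigma_{(3)\mathfrak{p}'}/\Sigma_{\mathfrak{p}'}) \cong (O_K/(3))^* \cong \mathbb{Z}/6\mathbb{Z}$ together with the containment in $\Sigma_p\Omega_{3p}$. Once the degree and containment are matched, the inertia-field identification follows from the forced index computed in (c).
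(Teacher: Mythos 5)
Your overall strategy --- the Takagi correspondence, conductor/ramification bookkeeping, and compositum degree counts --- is the same as the paper's, and parts (b) and (d) of your sketch are essentially sound. But there are three places where the proposal either asserts something false or replaces the actual argument with a circularity. First, in (a) you justify the key intersection $\Sigma_p \cap \Omega_{fp} = \Omega_p$ (in the case $d_K = -3, -4$, $f>1$) as ``confirmed by matching degrees.'' That is circular: the compositum degree is precisely the unknown being computed, so it cannot be used to pin down the intersection. The claim is true, but it requires the ideal-group comparison the paper carries out: since $[\Omega_{pf}:\Omega_p\Omega_f] = w/2$ is prime, a proper containment $\Omega_p\Omega_f \subsetneq \Sigma_p\Omega_f\cap\Omega_{pf}$ would force $\Omega_{pf}\subset\Sigma_p\Omega_f$, and this is ruled out by exhibiting an ideal in one ideal group and not the other. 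Second, in (c) your lower bound $[\textsf{K}_T:F_{\mathfrak{p}'}]\ge w/2$ from tameness is fine, but the matching upper bound needs the ramification index of $\mathfrak{p}$ in $L_{\mathcal{O},p}/K$ to be exactly $p-1$, and your ``trivial kernel'' condition (no nontrivial unit $\equiv 1 \pmod{\mathfrak{p}'(f)}$) is not the right one: the ideal group $P_{\mathcal{O},p}$ permits a unit multiplier $\epsilon\equiv a \pmod f$ for any rational integer $a$, not only $a=1$, so the relevant kernel is larger than the set you describe, and the computation is in any case not carried out. The paper instead writes $\Sigma_p = \Sigma_{\mathfrak{p}'}(\zeta_p)$ to get ramification index $p-1$ in $\Sigma_p/\Sigma$, observes that the index in $\Omega_{pf}/\Sigma$ divides $p-1$, and invokes Abhyankar's Lemma to conclude $e = p-1$ for the compositum.

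Third, and most seriously, in (e) you assert $\Sigma_{(3)\mathfrak{p}'}\subseteq\Sigma_p\Omega_{3p}$. This is false: $\Sigma_{(3)\mathfrak{p}'}\Omega_f$ has conductor prime to $\mathfrak{p}$, so the asserted containment would place it inside $\textsf{K}_T$ and give $[\textsf{K}_T:F_{\mathfrak{p}'}]\ge[\Sigma_{(3)\mathfrak{p}'}:\Sigma_{\mathfrak{p}'}] = 6$, contradicting the value $w/2=3$ from part (c). The reason the argument of (d) does not transfer is exactly the content of (e): for $(\alpha)\in P_{\mathcal{O},p}$ with $\alpha\equiv a\pmod{fp}$ and $(a,3)=1$ one only gets $\alpha\equiv\pm1\pmod{(3)}$, not $\alpha\equiv 1$, so $P_{\mathcal{O},p}$ lands in the ideal group $H' = \{(\alpha)\ |\ \alpha\equiv1\ (\mathrm{mod}\ \mathfrak{p}')\ \wedge\ \alpha^2\equiv1\ (\mathrm{mod}\ (3)\mathfrak{p}')\}$ of the cubic subextension $\Sigma'$, and not in the ray group $P_1((3)\mathfrak{p}')$. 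Identifying $H'$ as the index-$2$ subgroup of $H_{\mathfrak{p}'}$ containing $H_{(3)\mathfrak{p}'}$, and verifying $H_{3p}\subseteq H'$ (which uses that the root of unity $\zeta$ appearing in the ideal group of $\Sigma_p\Omega_{3p}$ must be $\pm1$ because $\{1,\zeta_3\}$ is a $\mathbb{Z}$-basis of $R_K$), is the real work of part (e), and it is absent from the proposal.
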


\begin{proof}
(a) If $w$ is the number of roots of unity in $K$, then $(p, w) = 1$ and 
$$[\Sigma_p:\Sigma] = \frac{\varphi(\frak{p} \frak{p}')}{w} = \frac{(p-1)^2}{w},$$
since the residue classes mod $p$ represented by roots of unity are distinct.  Thus,
$$[\Sigma_p \Omega_f: \Omega_f] = \frac{\varphi(\frak{p} \frak{p}')}{w} = \frac{(p-1)^2}{w},$$
since $\Omega_f \cap \Sigma_p = \Sigma$.  Furthermore, if $f > 1$,
$$[\Omega_{pf}:\Omega_f] = (p-1);$$
while if $f = 1$,
$$[\Omega_{p}:\Omega_1] = [\Omega_{p}:\Sigma] = \frac{(p-1)}{w/2}.$$
It follows that
$$[\Omega_p \Omega_f:\Omega_f] = \frac{(p-1)}{w/2},$$
since $\Omega_f \cap \Omega_p = \Sigma$, using that $(p,f) = 1$.  Now $\Omega_p \Omega_f \subseteq \Sigma_p \Omega_f \cap \Omega_{pf}$, and we claim that
$$\Omega_p \Omega_f = \Sigma_p \Omega_f \cap \Omega_{pf}.$$
This is clear if $d_K \neq -3, -4$, or $f = 1$, since $\Omega_{pf} = \Omega_p \Omega_f$ in these cases (see \cite{h}).  If $d_K = -3$ or $-4$ and $f > 1$, then
$$[\Omega_{pf}: \Omega_p \Omega_f] = w/2 = 3 \ \textrm{or} \ 2.$$
It follows that if $\Omega_p \Omega_f $ were properly contained in $\Sigma_p \Omega_f \cap \Omega_{pf}$, then we would have that $\Omega_{pf} \subset \Sigma_p \Omega_f$.  But comparing the ideal groups in $K$ for these two extensions, we see that
$$\{(\alpha) \ | \ \alpha \equiv r \ (\textrm{mod} \ pf), r \in \mathbb{Z}\} \nsupseteq \{(\alpha) \ | \ \alpha \equiv r \ (\textrm{mod} \ f), r \in \mathbb{Z}, \alpha \equiv \zeta \ (\textrm{mod} \ p)\};$$
here $\zeta \in K$ is some root of unity and $(r,pf) = 1$.  This proves the claim.  Hence, using that $f>1$ if $d_K = -3,-4$, we have that
\begin{align*}
[\Sigma_p \Omega_{pf}:\Omega_f] &= [\Sigma_p \Omega_{pf}: \Omega_p \Omega_f][\Omega_p \Omega_f: \Omega_f]\\
&= [\Sigma_p \Omega_f: \Omega_p \Omega_f][\Omega_{pf}:\Omega_p \Omega_f][\Omega_p \Omega_f: \Omega_f]\\
&= [\Sigma_p \Omega_f: \Omega_f] [\Omega_{pf}:\Omega_p \Omega_f]\\
&= \frac{(p-1)^2}{w} \frac{w}{2} = \frac{(p-1)^2}{2}.
\end{align*}
This proves (a). \medskip

\noindent (b)  If $d_K \neq -3, -4$, then $w=2$ and
$$[\Sigma_\frak{p} \Omega_f:K] = \frac{p-1}{2} h(-d),$$
since $\Sigma_\frak{p} \cap \Omega_f = \Sigma$.  Now the ramficiation index of $p$ in $\Sigma_p/K$ is at least $e = p-1$, since $\mathbb{Q}(\zeta_p) \subseteq \Sigma_p$.  Hence, the ramification index of $\mathfrak{p}$ in $L_{\mathcal{O},p}/F_{\frak{p}'}$ is at least equal to $[L_{\mathcal{O},p}:F_{\frak{p}'}] = p-1$, since $\mathfrak{p}$ is unramified in $F_{\frak{p}'}/K$.  Thus, it must be totally ramified in $L_{\mathcal{O},p}/F_{\frak{p}'}$.  This proves that $F_{\frak{p}'}$ is the inertia field of $\mathfrak{p}$ in $L_{\mathcal{O},p}/K$. \medskip

\noindent (c)  If $f = 1$, the claim follows as in part (b).  Assume $f > 1$.  Certainly $F_{\frak{p}'} \subseteq \textsf{K}_T$.  The claim will follow if we show that the ramification index of $\mathfrak{p}$ in $L_{\mathcal{O},p}/K$ is equal to $e = p-1$, since
$$[F_{\frak{p}'} :K] = \frac{p-1}{w} h(-d)$$
and therefore
$$[\textsf{K}_T: F_{\frak{p}'}] = \frac{[L_{\mathcal{O},p}:F_{\frak{p}'}]}{[L_{\mathcal{O},p}:\textsf{K}_T]} = \frac{(w/2)(p-1)}{p-1} = \frac{w}{2}.$$
To show $e = p-1$ we first note that $[\Sigma(\zeta_p):\Sigma] = p-1$, so that
$$[\Sigma_p:\Sigma] = \frac{(p-1)^2}{w} \ \textrm{and} \ [\Sigma_{\frak{p}'}:\Sigma] = \frac{(p-1)}{w}$$
imply that
$$[\Sigma_{\frak{p}'} \Sigma(\zeta_p):\Sigma] = \frac{(p-1)^2}{w} = [\Sigma_p:\Sigma],$$
since $\Sigma_{\frak{p}'} \cap \Sigma(\zeta_p) = \Sigma$.  Thus, $\Sigma_p = \Sigma_{\frak{p}'}(\zeta_p)$, which shows that the ramification index of the prime divisors of $\mathfrak{p}$ in $\Sigma_p/\Sigma$ is $e_1 = p-1$.  \medskip

Secondly,
$$[\Omega_{pf}: \Omega_f] = \begin{cases} p-1, \ & f>1;\\ \frac{(p-1)}{w/2}, \ & f = 1. \end{cases}$$
Hence, the ramification index $e_2$ of the prime divisors of $\mathfrak{p}$ in $\Omega_{pf}/\Sigma$ divides $(p-1)$, since $p$ is unramified in $\Omega_f/\Sigma$.  The ramification is tame for both $\Omega_{pf}/\Sigma$ and $\Sigma_p/\Sigma$, so Abhyankar's Lemma \cite[p. 412]{mg}, together with the fact that $L_{\mathcal{O},p} = \Sigma_p \Omega_{pf}$, implies that $e = \textrm{lcm}(e_1, e_2) = p-1$ is the ramification index of the prime divisors of $\mathfrak{p}$ in $L_{\mathcal{O},p}/\Sigma$, and therefore of $\mathfrak{p}$ in $L_{\mathcal{O},p}/K$.  This proves (c). \medskip

\noindent (d) Assuming $f = 2f'$, it follows that $L_{\mathcal{O},p} = \Sigma_p \Omega_{2pf'} = \Sigma_p \Omega_{2p} \Omega_f$ (Hasse \cite{h}).  Checking ideal groups shows that $\Sigma_{(2)\frak{p}'} \subset \Sigma_p \Omega_{2p}$; and $\mathfrak{p}$ is unramified in $\Sigma_{(2)\frak{p}'} \Omega_f \subset L_{\mathcal{O},p}$.  The claim now follows from (c), since $[\Sigma_{(2)\frak{p}'} \Omega_f: \Omega_f] = \frac{p-1}{2} = \frac{w}{2} [\Sigma_{\frak{p}'} \Omega_f: \Omega_f]$.  (Note that $\Sigma_{(2)\frak{p}'} \cap \Omega_f \subseteq \Sigma_{(2)\frak{p}'} \cap \Sigma_{(2f')} \subseteq \Sigma_{(2)} = \Sigma$, since $\varphi(2)/2 = 1$.)\medskip

\noindent (e) As in part (d), we have that $f = 3f'$ and $L_{\mathcal{O},p} = \Sigma_p \Omega_{3pf'} = \Sigma_p \Omega_{3p} \Omega_f$.  Now let $\Sigma'$ be the unique cubic extension of $\Sigma_{\frak{p}'}$ inside $\Sigma_{(3)\frak{p}'}$, which is a $6$-th degree extension of $\Sigma_{\frak{p}'}$.  Then
$$\Sigma_{\frak{p}'} \subset \Sigma' \subset \Sigma_{(3)\frak{p}'}$$
and the corresponding ideal groups (declared modulo $3p$) are
$$H_{(3)\frak{p}'} \subset H' \subset H_{\frak{p}'}.$$
Moreover, $[H':H_{(3)\frak{p}'}] = 2$, so that $H'$ consists of those principal ideals $(\alpha)$, for which (wlog)
$$\alpha \equiv 1 \ (\textrm{mod} \ \mathfrak{p}') \ \ \textrm{and} \ \ (\alpha)^2 \in H_{(3)\frak{p}'} .$$
These conditions imply that $\alpha^2 \equiv \zeta'$ (mod $(3)\mathfrak{p}')$, for some root of unity $\zeta'$ satisfying $\zeta' \equiv 1$ (mod $\mathfrak{p}'$), and therefore $\zeta' = 1$.  Hence
$$H' = \{(\alpha) \ | \ \alpha \equiv 1 \ (\textrm{mod} \ \mathfrak{p}') \ \wedge \ \alpha^2 \equiv 1 \ (\textrm{mod} \ (3)\mathfrak{p}')\}.$$
On the other hand, the ideal group in $K$ of $\Sigma_p \Omega_{3p}$ is
$$H_{3p} = \{(\alpha) \ | \ \alpha \equiv r \ (\textrm{mod} \ 3p) \ \wedge \ \alpha \equiv \zeta \ (\textrm{mod} \ p)\},$$
for some root of unity $\zeta$.  Since $\{1,\zeta_3\}$ is a basis for $R_K = \mathbb{Z}[\zeta_3]$, these conditions imply that $\zeta = \pm 1$.  
If necessary, we replace $\alpha$ by $-\alpha$ and obtain that
$$H_{3p} = \{(\alpha) \ | \ \alpha \equiv r \ (\textrm{mod} \ 3p) \ \wedge \ \alpha \equiv 1 \ (\textrm{mod} \ p)\}.$$
Now it is clear that, for these $\alpha$, we have $\alpha \equiv \pm 1$ (mod $3$), hence $\alpha^2 \equiv 1$ (mod $3p$).  This implies that $H_{3p} \subset H'$ and therefore $\Sigma' \subset \Sigma_p \Omega_{3p}$.  This gives that
$$\Sigma' \subseteq \Sigma_{(3)\frak{p}'} \cap \Sigma_p \Omega_{3p}$$
and therefore
$$\Sigma'\Omega_f \subseteq \Sigma_{(3)\frak{p}'}\Omega_f \cap \Sigma_p \Omega_{3p}\Omega_f.$$
It is clear that the latter intersection is contained in $\textsf{K}_T$, which has degree $3$ over $ \Sigma_{\frak{p}'}\Omega_f$, by part (c).  This implies that the intersection equals $\textsf{K}_T = \Sigma'\Omega_f$, since the latter field also has degree $3$ over $\Sigma_{\frak{p}'}\Omega_f$.  
(For this, note that $\Omega_f \cap \Sigma_{(3)\mathfrak{p}'}$ is an abelian extension of $K$ whose conductor divides $(f,(3)\mathfrak{p}') = (3)$, and is therefore a subfield of $\Sigma_{(3)} = \Sigma$.  Hence, $\Omega_f \cap \Sigma_{(3)\mathfrak{p}'} = \Sigma$.  Now appeal to \cite[Satz 119, pp. 122-123]{halg} or \cite[Prop. 19, p. 591]{df}.) This proves (e).
\end{proof}

\noindent {\bf Example ($p = 5$).}  Consider the case $p = 5$.  In \cite{m6} it was proved that for any discriminant $-d = d_K f^2 \equiv \pm 1$ (mod $5$), the inertia field of a prime divisor $\wp_5$ of $(5)$ in $L_{\mathcal{O},5}/K$, $K = \mathbb{Q}(\sqrt{-d})$, is generated over $\mathbb{Q}$ by a value $\eta = r(w/5)$ of the Rogers-Ramanujan continued fraction $r(\tau)$.  Thus, when $-d = -4f^2$, with odd $f$, $\textsf{K}_T = \mathbb{Q}(\eta)$, where $\eta = r((a+fi)/5)$, where $5^2 \mid a^2+f^2, (a,f) = 1$, and $\wp_5^2 \mid (a+fi)$.  When $f = 3$, $\textsf{K}_T$ is generated over $\mathbb{Q}$ by the root $x = r((-4+3i)/5)$ of the polynomial
\begin{equation*}
p_{36}(x) = x^8 +x^6 - 6x^5 +9x^4 +6x^3 +x^2 +1.
\end{equation*}
See \cite[pp. 1208, 1210]{m6}.  In this case, the congruence
\begin{equation*}
p_{36}(x) \equiv (x^4 + 3x^3 + x^2 + 2x + 1)(x + 3)^4 \ (\textrm{mod} \ 5)
\end{equation*}
shows that the inertial degree of $\wp_5$ in $\textsf{K}_T/K$ is $4$, while the inertial degree of $\wp_5$ in $\Sigma_{\wp_5'}\Omega_f = \Omega_3$ is $2$, since
$$\wp_5 = (2+i), \ \wp_5^2 = (-4+3i).$$
On the other hand, from \cite[p. 810]{m2}, if $f = 7$, we have
\begin{align*}
p_{196}(x) &= x^{16} + 14x^{15} + 64x^{14} + 84x^{13} - 35x^{12} - 14x^{11} + 196x^{10} + 672x^9\\
& \ \ \  + 1029x^8 - 672x^7 + 196x^6 + 14x^5 - 35x^4 - 84x^3 +64x^2 - 14x+1.
\end{align*}
Here, the inertial degree of $\wp_5$ in $\textsf{K}_T/K$ is $8$, since
$$p_{196}(x) \equiv (x + 3)^8(x^8 + 2x^6 + 4x^5 + x^3 + 2x^2 + 1) \ (\textrm{mod} \ 5);$$
while the inertial degree of $\wp_5$ in $\Omega_7$ is $4$, because
$$\wp_5^4 = (-i(2+i)^4) = 24+7i.$$
In both of these cases, the ideal $\wp_5$ remains prime in $\textsf{K}_T$.  It follows from these calculations that the discriminant $-d = -36$ contributes $h(-36) = 2$ to the sum \eqref{eqn:1.3} over discriminants in $\mathfrak{D}_{4,5}$, while $-d = -196$ contributes $h(-196) = 4$ to the corresponding sum over $\mathfrak{D}_{8,5}$.  \medskip

However, if $p = 5$ and $f = 11$, $\textsf{K}_T$ is generated over $\mathbb{Q}$ by a root of the polynomial
\begin{align*}
p_{484}(x) &= x^{24} + 22x^{23} + 89x^{22} - 1452x^{21} + 946x^{20} + 10890x^{19} - 2706x^{18}\\
& \  - 18106x^{17} - 12309x^{16} + 20570x^{15} + 119702x^{14} - 36322x^{13} - 207713x^{12} \\
& \  + 36322x^{11} + 119702x^{10} - 20570x^9 - 12309x^8 + 18106x^7 - 2706x^6 \\
& \ - 10890x^5 + 946x^4 + 1452x^3 + 89x^2 - 22x + 1.
\end{align*}
This may be verified by noting that $\eta = r((2+11i)/5)$ is a periodic point with period $n = 3$ of the algebraic function $\mathfrak{g}(z)$ considered in $\cite{m2}$.  The polynomial $p_{484}(x)$ is one of four polynomials of degree $4h(-d) = 24$ dividing the corresponding polynomial $\textsf{P}_3(x)$, and the only one of the four whose discriminant is divisible by $11$.  We have the following congruence modulo $5$:
$$p_{484}(x) \equiv (x + 3)^{12}(x^3 + 4x^2 + 3)(x^3 + 3x + 3)(x^3 + 3x^2 + 2x + 2)(x^3 + 4x^2 + 4x + 2),$$
so that $\wp_5$ splits into a product of $4$ prime ideals of degree $3$ in $\textsf{K}_T$.  In addition, the quantity $z = \eta-1/\eta$ generates $\Omega_{11}$ over $\mathbb{Q}$, and is a root of the minimal polynomial
\begin{align*}
q_{484}(z) &= z^{12} + 22z^{11} + 101z^{10} - 1210z^9 + 1890z^8 - 1210z^7 + 8089z^6 + 20614z^5\\
& \ \  - 5070z^4 + 40150z^3 + 63509z^2 - 173z + 3733.
\end{align*}
Since
$$q_{484}(z) \equiv (z + 1)^6(z^3 + 2z^2 + z + 3)(z^3 + 4z^2 + z + 1) \ (\textrm{mod} \ 5),$$
the prime $\wp_5$ splits into $2$ primes of degree $3$ in the $6$-th degree extension $\Omega_{11}/K$.  This can also be seen from
$$\wp_5 = (2+i), \ \wp_5^3 = (2+11 i).$$
In this example, we see that the prime divisors of $5$ in $F_{\wp_5'} = \Omega_f$ split in the extension $\textsf{K}_T/\Omega_f$.  Thus, both types of splitting behavior are possible in this context, for the field $K = \mathbb{Q}(i)$ and odd values of the ring conductor $f$.  In particular, the inertial degree of the prime divisors of $\wp_5$ in $\textsf{K}_T$ is equal to the period of the value $\eta$ with respect to the algebraic function $\mathfrak{g}(z)$.  The same relationship holds for all values of $f$.  In addition, the discriminant $-d = -484 \in \mathfrak{D}_{3,5}$ contributes $h(-484) = 6$ to the sum \eqref{eqn:1.3} for $n = 3$. \medskip

To consider the case $p = 7$, we note the following theorem, which will be proved in another paper.  See \cite[Thm. 5, p. 362]{m7} and \cite[p. 156]{duk}.

\begin{thm}
Let $h(\tau)$ be the modular function for $\Gamma_1(7)$ defined by
$$h(\tau) = q^{-1} \prod_{n \ge 1}{\frac{(1-q^{7n-3})(1-q^{7n-4})(1-q^{7n-2})^2(1-q^{7n-5})^2}{(1-q^{7n-1})^3(1-q^{7n-6})^3}}, \ q = e^{2 \pi i \tau}.$$
If $\left(\frac{-d}{7}\right) = +1$ and $(7) = \frak{p}_7 \frak{p}_7'$ in $K = \mathbb{Q}(\sqrt{-d})$, with $-d = d_Kf^2$, let $\mathcal{O} = \textsf{R}_{-d}$ be the order of discriminant $-d$ in $K$ and 
$$w = \frac{v+\sqrt{-d}}{2}, \ \ \wp_7^2 \mid w, \ \ (w,f) = 1.$$
Then $\mathbb{Q}(h(w/7))$ is the inertia field of the prime ideal $\frak{p}_7$ in the extension $L_{\mathcal{O},7} = \Sigma_7 \Omega_{7f}$ over $K$.  The inertial degree of $\frak{p}_7$ in $\mathbb{Q}(h(w/7))/K$ is equal to the period of $\eta = h(w/7)$ with respect to the algebraic function defined by
\begin{align*}
F(x,y) &= -y^7 + (x^7 - 21x^6 + 161x^5 - 546x^4 + 791x^3 - 406x^2 + 21x + 16)y^6\\
& \ + (-16x^7 + 280x^6 - 1477x^5 + 2380x^4 - 2331x^3 + 1330x^2 - 182x - 74)y^5\\
& \ + (74x^7 - 1085x^6 + 4725x^5 - 5866x^4 + 3171x^3 - 1148x^2 + 203x + 78)y^4\\
& \ + (-78x^7 + 1211x^6 - 5642x^5 + 7378x^4 - 3066x^3 + 56x^2 + 63x - 9)y^3\\
& \  + (9x^7 - 308x^6 + 2247x^5 - 3549x^4 + 1491x^3 + 203x^2 - 84x - 10)y^2\\
& \  + (10x^7 - 84x^6 + 7x^5 + 168x^4 - 21x^3 - 56x^2 - 14x - 1)y + x^7.
\end{align*}
\label{thm:6}
\end{thm}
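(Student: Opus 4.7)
The plan is to adapt the strategy used for $p=3$ via the function $\hat F(z)$ in Sections 4--5 and for $p=5$ via the Rogers--Ramanujan function in \cite{m6}. Three ingredients need to be established: (i) $h(\tau)$ is a Hauptmodul for the modular curve $X_1(7)$; (ii) at the CM point $\tau_0 = w/7$, the value $\eta = h(\tau_0)$ lies in, and in fact generates, the inertia field $F_d$ of $\mathfrak{p}_7$ in $L_{\mathcal{O},7}/K$; and (iii) the displayed polynomial $F(x,y)$ is the modular equation that implements Frobenius at $\mathfrak{p}_7$, so that the minimal period of $\eta$ under $F$ equals $\textrm{ord}(\tau_{\mathfrak{p}_7})$.

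For (i), I would first rewrite $h(\tau)$ as an eta-quotient of level $7$ (or equivalently as a ratio of Siegel functions), from which its invariance under the generators of $\Gamma_1(7)$ and its behavior at the cusps become mechanical. Combined with the facts that $X_1(7)$ has genus $0$ and that $h$ has a single simple pole (at the cusp $i\infty$, where $q$ is the local parameter), this forces $\mathbb{C}(X_1(7)) = \mathbb{C}(h)$, and in particular $h$ separates CM points in distinct $\Gamma_1(7)$-orbits. For (ii), I would invoke Shimura reciprocity for the modular function field of level $7$ at the CM point $\tau_0 = w/7$: an $\mathcal{O}$-ideal $\mathfrak{a}$ acts via an explicit matrix $M_\mathfrak{a} \in \textrm{GL}_2(\mathbb{Z}/7\mathbb{Z})$ computed with respect to the basis $\{w,1\}$ of $\mathcal{O}$. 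The hypotheses $\wp_7^2 \mid w$ and $(w,f)=1$ are chosen precisely to put $M_{\mathfrak{p}_7}$ into $\pm \Gamma_1(7)$ modulo scalars, so that the Artin symbol $\tau_{\mathfrak{p}_7}$ fixes $\eta$; a degree count, using Theorem \ref{thm:5}(b)--(e) to read off $[F_d:K]$ in the generic case and in each of the exceptional cases $d_K \in \{-3,-4\}$, then yields $\mathbb{Q}(\eta) = F_d$.

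For (iii), I would verify the modular equation $F(h(\tau), h(\sigma\tau)) = 0$, where $\sigma \in \textrm{GL}_2(\mathbb{Z})$ is the integral representative of the Shimura-reciprocity action attached to $\mathfrak{p}_7$, as a $q$-expansion identity; because both sides are modular functions of controlled weight and level, a finite $q$-expansion check suffices. Once the identity is established, specializing at $\tau = \tau_0$ gives $\eta^{\tau_{\mathfrak{p}_7}} = F_*(\eta)$ for the branch $F_*$ of the algebraic function determined locally at $\eta$, and iterating yields $\eta^{\tau_{\mathfrak{p}_7}^n} = F_*^n(\eta)$. Hence the minimal period of $\eta$ under $F$ is exactly $\textrm{ord}(\tau_{\mathfrak{p}_7}) \in \textrm{Gal}(F_d/K)$, which by $(b)$ equals the inertial degree of $\mathfrak{p}_7$ in $F_d/K$.

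The main obstacle is the explicit verification of the degree-seven modular equation $F(x,y) = 0$ with the specific coefficients displayed in the statement: one must both check the $q$-expansion identity to sufficient precision and establish that this polynomial is irreducible in $\mathbb{Z}[x,y]$, so as to ensure that $\eta^{\tau_{\mathfrak{p}_7}}$ is uniquely specified as a single branch rather than one of several roots of $F(\eta,y) = 0$. A secondary obstacle is the exceptional case $d_K \in \{-3,-4\}$ with $3 \mid f$ or $2 \mid f$, where by Theorem \ref{thm:5}(d)--(e) the inertia field $F_d$ is strictly larger than $\Sigma_{\wp_7'}\Omega_f$; here the eta-quotient form of $h$ must be exploited to show directly that $h(w/7)$ reaches these enlarged inertia fields, not merely a proper subfield of index $w/2$.
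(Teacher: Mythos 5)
First, a point of orientation: the paper does not prove Theorem \ref{thm:6} at all --- it is stated with the explicit remark that it ``will be proved in another paper,'' with pointers to \cite[Thm. 5, p. 362]{m7} and \cite[p. 156]{duk} --- so there is no in-paper argument to compare yours against, and your outline must be judged on its own terms. Its three-part shape (Hauptmodul for $X_1(7)$, Shimura reciprocity at the CM point, explicit level-$7$ modular equation) is the right general strategy and matches how the analogous statements are handled for $p=5$ in \cite{m6} and for $p=3$ in Sections 4--5 of this paper. However, step (ii) as you state it is wrong: you claim the normalization $\wp_7^2 \mid w$, $(w,f)=1$ forces the Artin symbol $\tau_{\mathfrak{p}_7}$ to fix $\eta$. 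If that were true, $\eta$ would lie in the decomposition field of $\mathfrak{p}_7$, the prime $\mathfrak{p}_7$ would split completely in $\mathbb{Q}(\eta)/K$, and the period of $\eta$ would be $1$ --- contradicting the second assertion of the very theorem you are proving. What the hypotheses on $w$ must actually buy is that the \emph{inertia} subgroup at $\mathfrak{p}_7$ (the image of the local units under the reciprocity map) acts trivially on $\eta$, so that $\eta$ lands in the inertia field; the Frobenius itself acts nontrivially, and its orbit is precisely what the modular equation is supposed to track. The degree count showing $\mathbb{Q}(\eta)$ is all of the inertia field, including the delicate cases $d_K \in \{-3,-4\}$ via Theorem \ref{thm:5}(c)--(e), then still remains to be carried out, as you acknowledge.

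There is a second gap in step (iii): you locate the difficulty in the irreducibility of $F(x,y)$ over $\mathbb{Z}$, but irreducibility neither singles out which of the roots of $F(\eta,y)=0$ equals $\eta^{\tau_{\mathfrak{p}_7}}$ nor shows that the minimal period of $\eta$ --- in the chain sense $F(a,a_1)=F(a_1,a_2)=\cdots=F(a_{n-1},a)=0$ used throughout the paper --- cannot be strictly smaller than $\mathrm{ord}(\tau_{\mathfrak{p}_7})$. The missing ingredient is the $7$-adic analogue of Section 5 and Lemma \ref{lem:1}: a single-valued branch of the algebraic function, convergent on a suitable domain in the maximal unramified extension of $\mathbb{Q}_7$, which lifts the Frobenius $y \equiv x^7 \pmod 7$ and is the unique root of $F(\eta,y)=0$ in that extension (the congruence $R_n(x) \equiv -(x^{7^n}-x)(x+2)^{7^n-1} \pmod 7$ quoted later in Section 8 is the trace of exactly this structure). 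Without that local argument the identification of the period with the inertial degree is not established, so the proposal is a serviceable road map with a genuine gap at its arithmetic core.
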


We will use this theorem to check Conjecture 1 for $p = 7$ and $n = 2,3$. \medskip

First, we have the following generalization of Theorem \ref{thm:4}.

\begin{thm}
Assume $d_K \neq -3, -4$ and $(p) = \mathfrak{p} \mathfrak{p}'$ in $R_K$.  Then the degree $n$ over $K$ of the prime divisors of $\mathfrak{p}$ in $F_{\frak{p}'} \subset L_{\mathcal{O},p}$ is the smallest positive integer $n$ for which
$$\mathfrak{p}^n = \left(\frac{x+y\sqrt{-d}}{2}\right), \ \frac{x+y\sqrt{-d}}{2} \in \mathcal{O} = \textsf{R}_{-d}, \ \textrm{with} \ x \equiv \pm 1 \ (\textrm{mod} \ p),$$
and $\alpha = \frac{x+y\sqrt{-d}}{2} \equiv \pm 1$ (mod $\mathfrak{p}'$).  On the other hand, if $d_K = -3$ or $-4$ and $n$ is the degree of the prime divisors of 
$\frak{p}$ in $\textsf{K}_T \subset  L_{\mathcal{O},p}$, then the above equation still holds, but with
\begin{equation*}
x \equiv \zeta \ (\textrm{mod} \ \frak{p}'), \ \textrm{for some} \ \zeta = \pm \rho^r \ \textrm{or} \ i^r, \  r \in \mathbb{Z},
\end{equation*}
according as $d_K = -3$ or $-4$.  This condition is necessary but not sufficient.
\label{thm:7}
\end{thm}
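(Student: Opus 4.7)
The proof adapts the argument of Theorem~\ref{thm:4} to general split primes $p$. Since $\mathfrak{p}$ is unramified in $F_{\mathfrak{p}'}/K$ by Theorem~\ref{thm:5}(b), the degree of the prime divisors of $\mathfrak{p}$ in $F_{\mathfrak{p}'}$ equals the order of the Artin symbol $\tau_{\mathfrak{p}} = \left(\frac{F_{\mathfrak{p}'}/K}{\mathfrak{p}}\right)$, and by class field theory this is the smallest $n \ge 1$ for which $\mathfrak{p}^n$ lies in the defining ideal group
$$P_{\mathcal{O},\mathfrak{p}'} = \{(\alpha) \mid \alpha \equiv a \ (\textrm{mod}\ f\mathfrak{p}'),\ a \in \mathbb{Z},\ (a,f) = 1,\ a \equiv 1 \ (\textrm{mod}\ p)\}$$
of $F_{\mathfrak{p}'} = \Sigma_{\mathfrak{p}'}\Omega_f$ in $K$. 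Using $(f,p) = 1$ and the Chinese Remainder Theorem in $R_K$, I would show this is equivalent to requiring $\mathfrak{p}^n = (\alpha) R_K$ for some $\alpha \in \mathcal{O}$ with $(\alpha,f)=1$ and $\alpha \equiv 1 \ (\textrm{mod}\ \mathfrak{p}')$.

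Since two generators of a principal ideal in $R_K$ differ by a unit, and $d_K \neq -3,-4$ gives $R_K^\times = \{\pm 1\}$, the condition becomes: $\mathfrak{p}^n$ has a generator $\alpha \in \mathcal{O}$ with $\alpha \equiv \pm 1 \ (\textrm{mod}\ \mathfrak{p}')$. Writing $\alpha = (x+y\sqrt{-d})/2$, the conjugate $\bar\alpha$ generates $\mathfrak{p}'^n$ and hence lies in $\mathfrak{p}'$, so
$$\alpha \equiv \alpha + \bar\alpha = x \ (\textrm{mod}\ \mathfrak{p}').$$
The condition becomes $x \equiv \pm 1 \ (\textrm{mod}\ \mathfrak{p}')$, which, since $x \in \mathbb{Z}$, is $x \equiv \pm 1 \ (\textrm{mod}\ p)$. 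This proves the first assertion.

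For $d_K = -3$ or $-4$, the same argument yields a generator condition $\alpha \equiv \zeta \ (\textrm{mod}\ \mathfrak{p}')$ with $\zeta$ ranging over the larger group of $6$th or $4$th roots of unity, and by $\alpha \equiv x$ this gives $x \equiv \zeta$ mod $\mathfrak{p}'$. However, by Theorem~\ref{thm:5}(c)--(e) we have $\textsf{K}_T \supsetneq F_{\mathfrak{p}'}$ of index $w/2$ when $f>1$, so the ideal group of $\textsf{K}_T$ is a proper subgroup of $P_{\mathcal{O},\mathfrak{p}'}$. The derived congruence therefore characterizes $\mathfrak{p}^n \in P_{\mathcal{O},\mathfrak{p}'}$, equivalently $\tau_{F_{\mathfrak{p}'}}^n = 1$, which is necessary but not sufficient for $\mathfrak{p}^n$ to lie in the smaller ideal group of $\textsf{K}_T$. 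This accounts for the final clause.

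The main technical point to watch is the equivalence in the first paragraph, since it requires identifying ``$\alpha \equiv a \ (\textrm{mod}\ f\mathfrak{p}')$ for some $a \in \mathbb{Z}$'' with ``$\alpha \in \mathcal{O}$'', and combining with ``$a \equiv 1 \ (\textrm{mod}\ p)$'' to give ``$\alpha \equiv 1 \ (\textrm{mod}\ \mathfrak{p}')$''. Both directions are elementary applications of the Chinese Remainder Theorem to the decomposition $R_K/f\mathfrak{p}' \cong R_K/f \times \mathbb{F}_p$, but this is where the structure $\mathcal{O} = \mathbb{Z} + f R_K$ enters the argument. Once this is in place, the rest is mechanical, with $\alpha \equiv x \ (\textrm{mod}\ \mathfrak{p}')$ being the key arithmetic identity shared with Theorem~\ref{thm:4}.
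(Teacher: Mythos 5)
Your proposal is correct and takes essentially the same route as the paper: reduce the inertial degree to the order of the Artin symbol, hence to membership of $\mathfrak{p}^n$ in the ideal group of $F_{\mathfrak{p}'}$ (resp.\ of $\textsf{K}_T$), and translate that into congruence conditions on a generator lying in $\mathcal{O}$, with the unit group $\{\pm 1\}$ versus the full group of roots of unity accounting for the $\pm 1$ versus $\zeta$ conditions and the strict containment $F_{\mathfrak{p}'}\subsetneq\textsf{K}_T$ accounting for necessity without sufficiency. Your derivation of $\alpha\equiv x\pmod{\mathfrak{p}'}$ from $\bar\alpha\in\mathfrak{p}'$ is a slightly cleaner variant of the paper's $\sqrt{-d}\equiv x/y\pmod{\mathfrak{p}'}$, which tacitly uses $p\nmid y$.
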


\begin{proof}
For discriminants $d_K \neq -3, -4$ the proof is the same as the proof of Theorem \ref{thm:4}.  If $d_K = -3$ or $-4$, then as before, $\frak{p}^n$ has to be a principal ideal generated by an element in the order $\textsf{R}_{-d}$; and as before, we have that
$$\alpha = \frac{x+y\sqrt{-d}}{2} \equiv x \ (\textrm{mod} \ \frak{p}'), \ \ \textrm{since} \ \sqrt{-d} \equiv \frac{x}{y} \  (\textrm{mod} \ \frak{p}').$$
But in this case, the fact that the degree of $\frak{p}$ in $\Sigma_{\frak{p}'}/K$ divides $n$ implies that $\frak{p}^n$ lies in the ideal group corresponding to $\Sigma_{\frak{p}'}$, hence that $\alpha \equiv \zeta$ (mod $\frak{p}'$), where $\zeta$ is a root of unity in $K$.  The last statement of the theorem follows from the fact that the inertial degree of the prime divisors of $\frak{p}$ in $\textsf{K}_T$ can be a multiple of the degree of its prime divisors in $F_{\frak{p}'} = \Sigma_{\frak{p}'} \Omega_f$.
\end{proof}

For primes $p \equiv 11$ (mod $12$) this theorem allows us to compute the left side of \eqref{eqn:1.5} fairly easily, since the discriminants $d_K = -3,-4$ do not arise for these primes.  The results for $n = 2, 3, 4$ for various primes are given in Tables \ref{tab:1}-\ref{tab:3}, in which the left (LHS) and right (RHS) hand sides of \eqref{eqn:1.5} have been computed and compared, with the difference $LHS-RHS$ listed in the last column.  \medskip

These computations give evidence that Conjecture 1 holds for $p = 11$, but does not hold for the other primes in the tables.  \medskip

\noindent {\bf Example ($p = 7$).}  We first use Theorem \ref{thm:7} to determine the discriminants other than $-d = -3f^2$ which lie in $\mathfrak{D}_{2,7}$.  We have
\begin{align*}
4 \cdot 7^2 &= 1+ 3 \cdot 5 \cdot 13;\\
&= 6^2 + 2^5 \cdot 5;\\
&= 8^2 + 2^2 \cdot 3 \cdot 11.
\end{align*}
Thus we have the discriminants 
$$\{-3 \cdot 5 \cdot 13, \ -2^3 \cdot 5, \ -2^5 \cdot 5, \ -4 \cdot 3 \cdot 11\} \subseteq \mathfrak{D}_{2,7}.$$
Note that none of these discriminants occur for $n = 1$, since $4 \cdot 7-1 = 27$.  The sum of the class numbers for these four discriminants is
$$h(-15 \cdot 13)+h(-2^3 \cdot 5)+h(-2^5 \cdot 5) + h(-12 \cdot 11) = 4 + 2 + 4 + 4 = 14,$$
which is already equal to the right side of \eqref{eqn:1.5}.  Thus, we need to see if any discriminants of the form $-d = -3f^2$ lie in $\mathfrak{D}_{2,7}$.
\medskip

Using Theorem \ref{thm:6} we compute the resultant $R_2(x) = \textrm{Res}_y(F(x,y),F(y,x))$, which has exactly four primitive irreducible factors (i.e., whose roots are not fixed points), whose degrees are $6h(-d) = 12, 24, 24, 24$ (see \eqref{eqn:8.2}).  Since the above four discriminants must correspond to irreducible factors of $R_2(x)$, these are the only discriminants in the set $\mathfrak{D}_{2,7}$.  This verifies \eqref{eqn:1.5} for $p = 7$ and $n = 2$.  However, it is instructive to try to see this directly.  In $K = \mathbb{Q}(\sqrt{-3})$ we have that $\frak{p}_7 = (2-\rho)$, say, where $\rho = \frac{-1+\sqrt{-3}}{2}$.  The other generators of $\frak{p}_7$ are, up to sign,
\begin{equation}
2-\rho, \ \rho(2-\rho) = 1+3\rho, \ \textrm{and} \ \rho^2(2-\rho) = -3 - 2\rho.
\label{eqn:8.3}
\end{equation}
Since $\rho \equiv -3$ (mod $\frak{p}_7'$), these generators are congruent respectively, to
$$2 - \rho \equiv 5, \ \ 1 + 3\rho \equiv -1, \ \ -3 - 2\rho \equiv 3 \ \ (\textrm{mod} \ \frak{p}_7').$$
The middle congruence shows that $\frak{p}_7$ splits in $F_{\frak{p}'} = \Sigma_{\frak{p}_7'}\Omega_3$.  However, since $f = 3$ we must use the criterion in Theorem \ref{thm:5}(e).  What is the inertial degree of $\frak{p}_7$ in $\Sigma' \Omega_3$?  From the proof of that result, the ideal group of $\Sigma'$ in $K$ is
$$H' = \{(\alpha) \ | \ \alpha \equiv 1 \ (\textrm{mod} \ \frak{p}_7') \ \wedge \ \alpha^2 \equiv 1 \ (\textrm{mod} \ (3)\frak{p}_7')\}.$$
The generator $\alpha = -(1+3\rho)$ does satisfy both congruences, which implies that $\frak{p}_7 = (\alpha) \in H'$.  Thus $\frak{p}_7$ splits in $\Sigma'$ and clearly also in $\Omega_3$, so its inertial degree with respect to $\Sigma' \Omega_3$ is $1$, not $2$.  Therefore, $-d = -3^3 \notin \mathfrak{D}_{2,7}$.  For the same reasons, using Theorem \ref{thm:5}(c), with $f = 1$, $-d = -3$ is also not in this set.  These two discriminants correspond to the equation $4\cdot 7 =1 + 3^3$. \medskip

This leaves open the question of whether any of the discriminants implied by the relations
\begin{equation*}
\frak{p}_7^2 = (2-\rho)^2 = (3-5\rho) = (5+8\rho) = (-8-3\rho),
\end{equation*}
lie in $\mathfrak{D}_{2,7}$.  We can ignore the ring conductors $f = 1,3$, for the reasons given above, but what can we say about the ring conductors $f = 5, 2, 4, 8$?  These relations show that $\frak{p}_7$ splits in $\Sigma_{\frak{p}_7'}$ and its prime divisors have degree $2$ in the field $\Omega_f$, for $f = 5, 4, 8$.  Thus, because $\frak{p}_7$ is unramified in both extensions of $K$, its prime divisors have degree $2$ in the composite field $\Sigma_{\frak{p}_7'} \Omega_f$.  However, in this case the inertia field $\textsf{K}_T$ of $\frak{p}_7$ in $L_{\mathcal{O},7}$ has degree $3$ over $F_{\frak{p}'} = \Sigma_{\frak{p}_7'} \Omega_f$, and we don't have an easily applicable criterion here for determining the inertial degree of $\frak{p}_7$ in $\textsf{K}_T$, unless we have a polynomial whose root generates this field (see the continuation of this example below)\footnote{It turns out that all of these conductors, except $f = 2$, correspond to discriminants in $\mathfrak{D}_{6,7}$, i.e., to the period $n = 6$.}, or unless we can specify its ideal group in $K$.  The following proposition shows this is a nontrivial question.  At any rate, Theorem \ref{thm:6}, and in particular, the existence of the algebraic function defined by $F(x,y) = 0$, shows that there are no other discriminants in the set $\mathfrak{D}_{2,7}$.

\begin{prop}
Let $L_1$ and $L_2$ be distinct finite abelian extensions of the $p$-adic field $\mathbb{Q}_p$, with residue class degrees $f_1, f_2$ and tame ramification $e_1 = e_2 = e$.  Further, let $F_1$ and $F_2$ be the inertia fields of $\mathbb{Q}_p$ contained in $L_1, L_2$, respectively, over which $L_1$ and $L_2$ are purely ramified:
$$[L_1: F_1] = e = [L_2: F_2].$$
Let $L = L_1 L_2$ be the composite extension in $\overline{\mathbb{Q}}_p$ and $F$ its inertia field.  Then $[F:F_1 F_2]$ is a divisor of $e$.  If, in addition, $L_1 \cap L_2 = \mathbb{Q}_p$, then $[F:F_1F_2] = e$.
\label{prop:4}
\end{prop}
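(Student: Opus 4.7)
My plan is to analyze the tower $F_1 F_2 \subseteq F \subseteq L$ by introducing the auxiliary intermediate extensions $E_i := L_i \cdot F_1 F_2$ for $i = 1, 2$, and to extract $[F : F_1 F_2]$ from the Galois group $H := \textrm{Gal}(L/F_1 F_2)$ together with its inertia subgroup.

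First I would note that $F_1 F_2/\mathbb{Q}_p$ is unramified (a compositum of unramified extensions), so $F_1 F_2 \subseteq F$.  Since $L_i \cap F_1 F_2$ is an unramified sub-extension of $\mathbb{Q}_p$ contained in $L_i$, it lies inside $F_i$; and since it plainly contains $F_i$, we get $L_i \cap F_1 F_2 = F_i$.  Consequently $[E_i : F_1 F_2] = [L_i : F_i] = e$, and moreover $E_i/F_1 F_2$ remains totally tamely ramified of degree $e$, because an Eisenstein polynomial generating $L_i/F_i$ stays Eisenstein over the unramified base $F_1 F_2/F_i$.  Since tame totally ramified Galois extensions are cyclic, $\textrm{Gal}(E_i/F_1 F_2) \cong \mathbb{Z}/e\mathbb{Z}$.

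Setting $H_i := \textrm{Gal}(L/E_i)$, the quotient $H/H_i$ is therefore cyclic of order $e$, and because $E_1 E_2 = L$ we have $H_1 \cap H_2 = 1$.  The combined restriction map then injects $H \hookrightarrow (\mathbb{Z}/e\mathbb{Z})^2$, so $|H|$ divides $e^2$.  The crux is the inertia calculation.  Let $J \subseteq H$ be the inertia subgroup, so $F = L^J$.  Abhyankar's lemma, applied to the tamely ramified compositum $L = L_1 L_2$ with both ramification indices equal to $e$, gives $|J| \mid e$.  On the other hand, under each projection $H \twoheadrightarrow H/H_i$ the image of $J$ equals the inertia of $E_i/F_1 F_2$, which is the entire cyclic group $\mathbb{Z}/e\mathbb{Z}$ since $E_i/F_1 F_2$ is totally ramified.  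Hence $J$ surjects onto both factors of $(\mathbb{Z}/e\mathbb{Z})^2$, forcing $|J| \geq e$, and combining with $|J| \mid e$ we conclude $|J| = e$.  Therefore $[F : F_1 F_2] = |H|/|J| = |H|/e$, which is a divisor of $e$.

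For the equality assertion, assume $L_1 \cap L_2 = \mathbb{Q}_p$.  Then $F_1 \cap F_2 \subseteq L_1 \cap L_2 = \mathbb{Q}_p$, so $[F_1 F_2 : \mathbb{Q}_p] = f_1 f_2$; and since $L_1, L_2$ are Galois with trivial intersection, $[L : \mathbb{Q}_p] = [L_1 : \mathbb{Q}_p][L_2 : \mathbb{Q}_p] = e^2 f_1 f_2$.  A degree count then yields $|H| = [L : F_1 F_2] = e^2$, so $[F : F_1 F_2] = e$.  The part I expect to be the main obstacle is the inertia calculation in the previous paragraph --- specifically, checking that $J$ surjects onto each cyclic factor --- but this falls out cleanly from the compatibility of inertia subgroups with restriction maps of Galois groups, once $E_i/F_1 F_2$ has been identified as totally tamely ramified of degree $e$.
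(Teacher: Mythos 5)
Your proof is correct, but it takes a genuinely different route from the paper's. The paper invokes Hasse's explicit classification of tame abelian extensions of $\mathbb{Q}_p$, writing $L_i = \mathbb{Q}_p(\omega_i, \sqrt[e]{p\rho_i})$ with $\rho_i$ a power of the root of unity $\omega_i$ and $e \mid (p-1)$; it then exhibits $F$ concretely as $F_1F_2(\zeta^{1/e})$ with $\zeta = \rho_1/\rho_2$ a root of unity of order prime to $p$, from which $[F:F_1F_2] \mid e$ is immediate, and settles the second assertion by the degree count $[L:L_1] = ef_2$ after observing $L/L_1$ is unramified. You instead argue structurally: you reduce to the totally tamely ramified extensions $E_i = L_iF_1F_2$ of $F_1F_2$ (the Eisenstein-polynomial base-change argument for $[E_i:F_1F_2]=e$ is sound, as is the identification $L_i \cap F_1F_2 = F_i$), embed $H = \mathrm{Gal}(L/F_1F_2)$ into $(\mathbb{Z}/e\mathbb{Z})^2$, and pin down the inertia subgroup $J$ by combining Abhyankar's lemma (for $|J| \mid e$) with the surjectivity of $J$ onto the inertia of each totally ramified quotient $E_i/F_1F_2$ (for $|J| \ge e$); all the group-theoretic facts you rely on (compatibility of inertia with quotients, cyclicity of tame totally ramified Galois extensions) are standard and correctly applied, and the final degree count for the case $L_1 \cap L_2 = \mathbb{Q}_p$ matches the paper's. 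What your approach buys is independence from the explicit Kummer-type description: it only uses that the $L_i$ are Galois and tamely ramified, so it would extend beyond the abelian case verbatim. What it gives up is the paper's explicit generator $\zeta^{1/e}$ for $F$ over $F_1F_2$, which makes the divisibility $[F:F_1F_2] \mid e$ visible at a glance and identifies exactly which unramified extension $F$ is.
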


\begin{proof}
By \cite[p. 242]{hz1} (or see \cite[p. 251]{hz2}, in which the counting arguments in \cite[Ch. 16]{hz1} have been corrected), the fields $L_1, L_2$ have the form
$$L_1 = \mathbb{Q}_p(\omega_1,\sqrt[e]{p \rho_1}), \ \ L_2 = \mathbb{Q}_p(\omega_2,\sqrt[e]{p \rho_2}),$$
where $\omega_i$ is a primitive $n_i$-th ($n_i = p^{f_i}-1$) root of unity contained in $L_i$, and $\rho_i = \omega_i^{r_i}$; and where $e \mid (p-1)$, since $L_i$ is abelian over $\mathbb{Q}_p$.  Further, $F_i = \mathbb{Q}_p(\omega_i)$.  If $\pi_i = \sqrt[e]{p \rho_i}$, then $(\pi_1/\pi_2)^e = \rho_1/\rho_2 = \zeta$ is an $n$-th ($n = p^f-1$) root of unity, where $f = \textrm{lcm}(f_1, f_2)$.  In particular, $\zeta^{1/e} \in L$, and
$$L =\mathbb{Q}_p(\omega_1, \omega_2, \zeta^{1/e}, \sqrt[e]{p \rho_2}),$$
since $\pi_2 \zeta^{1/e} = \pi_1$.  It follows that $F = \mathbb{Q}_p(\omega_1,\omega_2, \zeta^{1/e}) = F_1F_2(\zeta^{1/e})$ is unramified over $\mathbb{Q}_p$, 
since $e \mid (p-1)$, and therefore $[F:F_1F_2] \mid e$.  Since $L$ is purely ramified over $F$, this proves the first assertion.  \medskip

For the second assertion, $L/L_1$ is unramified, since the ramification indices of $L$ and $L_1$ over $\mathbb{Q}_p$ are both equal to $e$.  Now $[L:L_1] = [L_2:\mathbb{Q}_p] = ef_2$ implies that the inertial degree of $L/\mathbb{Q}_p$ is $f_1f_2 e$, where $[F_1F_2:\mathbb{Q}_p] = f_1f_2$.  This proves the second assertion.
\end{proof}

This proposition gives examples where the residue class field of a composition is larger than the composition of the individual residue class fields. In our case, the fields $\Sigma_{\frak{p}'}$ and $\Omega_f$ are the inertia fields for $\frak{p}$ in $\Sigma_p$ and $\Omega_{pf}$, respectively (see the proof of Theorem \ref{thm:5} and the proposition below), but the inertia field for the composite extension $\Sigma_p \Omega_{pf}$ is larger than $F_{\frak{p}'} = \Sigma_{\frak{p}'} \Omega_f$.  In this case $e = p-1$ is the ramification index for both both individual extensions, and $[\textsf{K}_T: F_{\frak{p}'}] = w/2$ divides $p-1$, when $K = \mathbb{Q}(\sqrt{-3})$ or $\mathbb{Q}(\sqrt{-1})$, as shown in Proposition \ref{prop:4}. For $p > 3$ and $f > 1$, the final assertion of the proposition, together with $w/2 < p-1$, implies that the completions $L_1 = \Sigma_{p,\frak{q}_1}$ and $L_2 = \Omega_{pf,\frak{q}_2}$, for prime divisors $\frak{q}_i$ of $\frak{p}$ in $\Sigma_p$, respectively, $\Omega_{pf}$, have an intersection strictly larger than $\mathbb{Q}_p$. \medskip

To show that $\Omega_{pf}/\Omega_f$ is totally ramified, we prove the following.

\begin{prop} Let $H_f$, for some $f > 1$, denote the ideal group for the ring class field $\Omega_f$ over $K = \mathbb{Q}(\zeta)$, $\zeta = i$ or $\rho = \frac{-1+\sqrt{-3}}{2}$, and let the ideal group $H_{pf}$ correspond to $\Omega_{pf}$, where $p$ is an odd prime not dividing $3f$ which splits in $K$.  Also assume that $2 \nmid f$ if $d_K = -4$ and $3 \nmid f$ if $d_K = -3$.
\begin{enumerate}[(a)]
\item The quotient group $H_f/H_{pf}$ is cyclic.
\item $H_f$ is the smallest ideal group containing $H_{pf}$ whose conductor is relatively prime to $\mathfrak{p}$, where $(p) = \mathfrak{p} \mathfrak{p}'$.
\item The inertial degree of $\mathfrak{p}$ in $\Omega_{pf}/K$ equals the inertial degree of $\mathfrak{p}$ in $\Omega_f/K$, and the prime divisors of $\mathfrak{p}$ in $\Omega_f$ are totally ramified in $\Omega_{pf}/\Omega_f$.
\end{enumerate}
\label{prop:5}
\end{prop}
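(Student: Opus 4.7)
The strategy is to prove (a) by exhibiting an explicit injective homomorphism $\phi: H_f/H_{pf} \hookrightarrow \mathbb{F}_p^*$, to prove (c) by combining the resulting order computation with a local conductor analysis at $\mathfrak{p}$, and to derive (b) as an immediate consequence of (c).

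For (a), I first verify by direct computation in $R_K = \mathbb{Z}[i]$ or $\mathbb{Z}[\rho]$ that under $f > 1$ the only units $u \in R_K^*$ satisfying $u \equiv m \pmod{fR_K}$ for some $m \in \mathbb{Z}$ are $u = \pm 1$, since adjoining $i$ or $\rho$ to $\mathbb{Z}$ immediately forces $f \mid 1$. Hence each $(\alpha) \in H_f$ admits a generator $\alpha$ with $\alpha \equiv a \pmod{fR_K}$, $a \in \mathbb{Z}$, unique up to sign. I then define
$$\phi((\alpha)) \;=\; \frac{\alpha \bmod \mathfrak{p}}{\alpha \bmod \mathfrak{p}'} \;\in\; \mathbb{F}_p^*,$$
which is well-defined (the sign ambiguity cancels in the ratio) and multiplicative. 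The kernel calculation is direct: $\phi((\alpha)) = 1$ means $\alpha \equiv c \pmod{pR_K}$ for some $c \in \mathbb{Z}$, which Chinese-remainders with $\alpha \equiv a \pmod{fR_K}$ to produce a generator congruent to a rational integer modulo $pfR_K$; so $\ker \phi = H_{pf}$. Since $\mathbb{F}_p^*$ is cyclic, this proves (a).

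For (c), the ring class number formula used in the proof of Theorem \ref{thm:5}(a) gives $|H_f/H_{pf}| = [\Omega_{pf}:\Omega_f] = p - 1$, so $\phi$ is in fact an isomorphism onto $\mathbb{F}_p^*$. The coprimality hypotheses on $f$ guarantee (by Cox \cite{co}) that the global conductor of $\Omega_{pf}/K$ is exactly $(pf) = \mathfrak{p}\mathfrak{p}'(f)$, so the local conductor exponent at $\mathfrak{p}$ is $1$ and tame local class field theory yields
$$|I_\mathfrak{p}(\Omega_{pf}/K)| \;=\; |(R_K/\mathfrak{p})^*| \;=\; p - 1.$$
Because $\Omega_f/K$ has conductor dividing $(f)$ and is therefore unramified at $\mathfrak{p}$, every inertia element at $\mathfrak{p}$ fixes $\Omega_f$, giving $I_\mathfrak{p}(\Omega_{pf}/K) \subseteq \mathrm{Gal}(\Omega_{pf}/\Omega_f)$. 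Both groups have order $p - 1$, so equality holds and $\mathfrak{p}$ is totally ramified in $\Omega_{pf}/\Omega_f$. Writing $\mathfrak{p} R_{\Omega_f} = \wp_1 \cdots \wp_g$ with common residue degree $f'$, each $\wp_i$ then has a unique extension $\mathfrak{P}_i$ to $\Omega_{pf}$ with $e(\mathfrak{P}_i/\wp_i) = p - 1$ and $f(\mathfrak{P}_i/\wp_i) = 1$, so the residue degree of $\mathfrak{P}_i$ over $K$ is $f'$, which proves the asserted equality of inertial degrees together with the total ramification in (c).

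Statement (b) follows at once: any ideal group $H \supseteq H_{pf}$ whose conductor is coprime to $\mathfrak{p}$ has a class field $L_H \subseteq \Omega_{pf}$ unramified at $\mathfrak{p}$, hence lying in the inertia field of $\mathfrak{p}$ in $\Omega_{pf}/K$, which by (c) equals $\Omega_f$; therefore $H \supseteq H_f$. The principal technical obstacle in this program is establishing that the conductor of $\Omega_{pf}/K$ is exactly $(pf)$ and not a proper divisor, which is precisely where the stated coprimality hypotheses $2 \nmid f$ (for $d_K = -4$) and $3 \nmid f$ (for $d_K = -3$) are needed to rule out the exceptional collapse of the ring class fields over $\mathbb{Q}(i)$ and $\mathbb{Q}(\rho)$.
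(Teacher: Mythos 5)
Your part (a) is correct and is essentially the paper's argument in different packaging: your map $\phi((\alpha)) = (\alpha \bmod \mathfrak{p})/(\alpha \bmod \mathfrak{p}')$ is exactly the isomorphism from the paper's quotient $((R_K/\mathfrak{p})^* \times (R_K/\mathfrak{p}')^*)/D$ onto $\mathbb{F}_p^*$, and your observation that a unit of $R_K$ congruent to a rational integer mod $f$ must be $\pm 1$ when $f>1$ is the same device the paper uses to pass from residue classes to ideal classes. The problem is in part (c), and since you derive (b) from (c), the gap propagates. The step ``the local conductor exponent at $\mathfrak{p}$ is $1$, so tame local class field theory yields $|I_\mathfrak{p}(\Omega_{pf}/K)| = |(R_K/\mathfrak{p})^*| = p-1$'' is false as an inference. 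Conductor exponent exactly $1$ at $\mathfrak{p}$ tells you only that the local norm group $N$ contains $1+p\mathbb{Z}_p$ but not all of $\mathbb{Z}_p^*$; the inertia group is $\mathbb{Z}_p^*/(N\cap\mathbb{Z}_p^*)$, a \emph{nontrivial quotient} of $\mathbb{F}_p^*$, hence cyclic of some order $>1$ dividing $p-1$ --- not necessarily of order $p-1$. Concretely, the ray class field $\Sigma_{\mathfrak{p}}$ over $K=\mathbb{Q}(i)$ has conductor exactly $\mathfrak{p}$ (for $p$ large enough) but inertia group of order $(p-1)/4$; the quadratic subfield of $\mathbb{Q}(\zeta_p)$ has conductor exactly $p$ but inertia of order $2$. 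So you cannot conclude $e=p-1$ from the conductor alone: you must rule out the possibility that some proper intermediate field between $\Omega_f$ and $\Omega_{pf}$ is already unramified at $\mathfrak{p}$ --- which is precisely statement (b). Your logic runs (c)$\Rightarrow$(b), but (b) (or an equivalent) is what is needed to prove (c).

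The paper's route closes exactly this hole, and it goes through (a) in an essential way: since $H_f/H_{pf}$ is cyclic, any intermediate ideal group $H$ with $H_{pf}\subseteq H \subsetneq H_f$ is the \emph{unique} subgroup of its index, hence is stable under the nontrivial automorphism $\sigma$ of $K/\mathbb{Q}$; therefore its conductor is a rational integer dividing $pf$ and divisible by $f$, and a short argument (comparing which residues mod $f$ occur for generators of ideals in $H_{pf}$) rules out conductor $(f)$, so every such $H$ has conductor divisible by $\mathfrak{p}$. That is (b), proved directly from (a). Then (c) follows from (b) by Hasse's decomposition law for ramified primes in class fields (Satz 12 in \cite{h0}): the inertia field of $\mathfrak{p}$ in $\Omega_{pf}/K$ is the class field of the smallest ideal group containing $H_{pf}$ with conductor prime to $\mathfrak{p}$, namely $\Omega_f$, whence $e = [H_f:H_{pf}] = p-1 = [\Omega_{pf}:\Omega_f]$ and the inertial degrees agree. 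If you want to keep your order of argument, you would need an independent proof that the inertia group has full order $p-1$, and no such proof is supplied; I recommend instead proving (b) first via the Galois-stability argument above. (A minor further point: the hypotheses $2\nmid f$, $3\nmid f$ are not primarily about pinning down the conductor of $\Omega_{pf}$ at $\mathfrak{p}$, which concerns only the odd prime $p$; they serve to keep the ring class field indices and unit contributions in the untwisted case throughout the argument.)
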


\begin{proof}
(a) We begin with the isomorphism
$$(R_K/(pf))^* \cong (R_K/\mathfrak{p})^* \times (R_K/\mathfrak{p}')^* \times (R_K/(f))^*.$$
Let $D = \{(r,r) \in (R_K/\mathfrak{p})^* \times (R_K/\mathfrak{p}')^*, \ r \in \mathbb{Z}, (r,pf) = 1\}$.  We show that $(R_K/(p))^*/D$ is cyclic.  For this let $g \in \mathbb{Z}$ be a primitive root modulo $p$ with $(g,pf) = 1$, and consider the order of $xD = (g,1)D$ in $((R_K/\mathfrak{p})^* \times (R_K/\mathfrak{p}')^*)/D$.  We have that $x^{p-1}D = (1,1)D = D$ and $x^kD = (g^k, 1)D \neq D$, for $0 < k< p-1$.  This shows that $((R_K/\mathfrak{p})^* \times (R_K/\mathfrak{p}')^*)/D$ is cyclic of order $p-1$. \medskip

Now we view
\begin{align*}
\tilde{H}_f &= \{[\alpha] \in (R_K/(pf))^* \ | \ (\alpha,p) = 1 \ \wedge \ \alpha \equiv r \ \textrm{mod} \ f, \ r \in \mathbb{Z} \}\\
 &\cong (R_K/\mathfrak{p})^* \times (R_K/\mathfrak{p}')^* \times \{[\alpha] \in (R_K/(f))^* \ | \ \alpha \equiv r \ \textrm{mod} \ f, r \in \mathbb{Z}\}
 \end{align*}
and
\begin{align*}
\tilde{H}_{pf} &= \{[\alpha] \in (R_K/(pf))^* \ | \ \alpha \equiv r \ \textrm{mod} \ pf, \ r \in \mathbb{Z} \}\\
&\cong D \times \{[\alpha] \in (R_K/(f))^* \ | \ \alpha \equiv r \ \textrm{mod} \ f, r \in \mathbb{Z}\}\\
&= D \times D_f
\end{align*}
as subgroups of $(R_K/(pf))^*$.  Then
$$\tilde{H}_f/\tilde{H}_{pf} \cong ((R_K/\mathfrak{p})^* \times (R_K/\mathfrak{p}')^*)/D \times D_f/D_f,$$
which implies that $\tilde{H}_f/\tilde{H}_{pf}$ is cyclic.  Now let $\alpha = x+fy\zeta$ ($\zeta = i$ or $\rho$) be chosen so that $[\alpha] \in \tilde{H}_f$ generates $\tilde{H}_f/\tilde{H}_{pf}$.  Then $(\alpha) \in H_f$ and $(\alpha)^{p-1} \in H_{pf}$.  Furthermore, no smaller power $(\alpha)^k$, $0, < k < p-1$, lies in $H_{pf}$.  If it did, then $\alpha^k \equiv r\zeta'$ (mod $pf$), for some root of unity $\zeta'$ and $r \in \mathbb{Z}$.  But $\alpha \equiv x$ (mod $f$), where $x \in \mathbb{Z}$, which implies that $x^k \equiv r\zeta'$ (mod $f$), so that $\zeta' = \pm 1$.  In that case $\alpha^k \equiv \pm r$ (mod $pf$), so that $(\alpha)\tilde{H}_{pf}$ would have order smaller than $p-1$ in $\tilde{H}_f/\tilde{H}_{pf}$.  This proves (a). \medskip

(b) If $H$ is an ideal group, declared modulo $pf$, satisfying $H_{pf} \subset H \subsetneq H_f$, then $H^\sigma = H$ for the nontrivial automorphism $\sigma$ of $K/\mathbb{Q}$.  This is because $H_f, H_{pf}$ are left fixed by $\sigma$ and $H$ is the unique subgroup of $H_f$ of index $[H_f:H]$, by a).  It follows that the conductor of $H$ would be a rational integer divisible by $f$ and dividing $pf$.  It is easy to see that this conductor cannot be $f$; if it were, $H$ would consist of principal ideals $(\alpha)$, where $\alpha \equiv r$ (mod $f$) for a proper subset of reduced residues $r$ modulo $f$, while any reduced residue (mod $f$) (and prime to $p$) can occur for an ideal $(\beta) \in H_{pf}$.  Hence, the conductor of such an $H$ must be divisible by $p$.  It follows that $H_f$ is the smallest ideal group (declared modulo $pf$) containing $H_{pf}$ whose conductor is not divisible by $\mathfrak{p}$.  This proves (b). \medskip

(c) The first assertion follows from part (b) and Hasse's theorem \cite[Satz 12, p. 31]{h0}, \cite[p. 137]{h2}, which says that the smallest power of $\mathfrak{p}$ lying in $H_f$ is the inertial degree of $\mathfrak{p}$ in $\Omega_{pf}/K$.  The second also follows from Hasse's theorem, since the ramification index of prime 
divisors of $p$ in $\Omega_{pf}/\Omega_f$ is $e = [H_f: H_{pf}] = p-1 = [\Omega_{pf}: \Omega_f]$, where the ideal groups $H_f, H_{pf}$ in $K$ correspond to the class 
fields $\Omega_f, \Omega_{pf}$ over $K$.
\end{proof}

\begin{table}[htp]
\caption{Checking Conjecture 1 for various values of $p$ with $n=2$.}
\begin{center}
\begin{tabular}{|c|c|c|c|c|}
\hline
$p$&Discriminants&LHS&RHS&Difference\\
\hline
$11$&$h(-2^3\cdot3)=2,\ h(-2^5\cdot3)=4,\ h(-2^7\cdot3)=8$,&$22$&$22$&$0$\\
&$h(-3\cdot7\cdot23)=4,\ h(-4\cdot5\cdot17)=4$&&&\\\hline

$23$&$h(-5\cdot47)=2,\ h(-3^2\cdot5\cdot47)=8$,&$30$&$46$&$-16$\\
&$h(-2^3\cdot3\cdot17)=4,\ h(-2^5\cdot3\cdot17)=8$,&&&\\
&$h(-4\cdot5\cdot7\cdot11)=8$&&&\\\hline

$47$&$h(-3\cdot5\cdot19\cdot31)=16,\ h(-4\cdot3\cdot5\cdot7)=8$,&$88$&$94$&$-6$\\
&$h(-2^4\cdot3\cdot5\cdot7)=16,\ h(-2^6\cdot3\cdot5\cdot7)=32$,&&&\\
&$h(-4\cdot23\cdot71)=16$&&&\\\hline

$59$&$h(-7\cdot13\cdot17)=12,\ h(-3^2\cdot7\cdot13\cdot17)=24$,&$128$&$118$&$10$\\
&$h(-4\cdot3\cdot5\cdot11)=8,\ h(-2^4\cdot3\cdot5\cdot11)=16$,&&&\\
&$h(-2^6\cdot3\cdot5\cdot11)=32,\ h(-4\cdot29\cdot89)=36$&&&\\\hline

$71$&$h(-3\cdot11\cdot13\cdot47)=32,\ h(-4\cdot5\cdot7\cdot107)=24$,&$146$&$142$&$4$\\
&$h(-2^3\cdot53)=6,\ h(-2^3\cdot3^2\cdot53)=24$,&&&\\
&$h(-2^5\cdot53)=12,\ h(-2^5\cdot3^2\cdot53)=48$&&&\\\hline

$83$&$h(-3\cdot5\cdot11\cdot167)=40,\ h(-4\cdot5\cdot41)=8$,&$136$&$166$&$-30$\\
&$h(-4\cdot5^3\cdot41)=40,\ h(-2^3\cdot3\cdot7\cdot31)=16$,&&&\\
&$h(-2^5\cdot3\cdot7\cdot31)=32$&&&\\\hline
\end{tabular}
\end{center}
\label{tab:1}
\end{table}\bigskip\pagebreak

\begin{table}[htp]
\caption{Checking primes $p = 11, 23$ for $n=3$.}
\begin{center}
\begin{tabular}{|c|c|c|c|c|}
\hline
$p$&Discriminants&LHS&RHS&Difference\\
\hline
$11$&$h(-5323)=15,\ h(-2^3\cdot653)=18$,&$264$&$264$&$0$\\
&$h(-5\cdot7\cdot37)=36,\ h(-4\cdot5\cdot7\cdot37)=36$,&&&\\
&$h(-19\cdot257)=18,\ h(-5\cdot7\cdot137)=12$,&&&\\
&$h(-4\cdot43)=3,\ h(-5^2\cdot43)=6$,&&&\\
&$h(-4\cdot547)=9,\ h(-4\cdot5^2\cdot43)=18$,&&&\\
&$h(-2^3\cdot521)=12,\ h(-139)=3$,&&&\\
&$h(-5^2\cdot139)=12,\ h(-3299)=27$,&&&\\
&$h(-2^3\cdot743)=24,\ h(-547)=3$,&&&\\
&$h(-7\cdot157)=6,\ h(-5\cdot167)=6$&&&\\\hline

$23$&$h(-41\cdot1187)=36,\ h(-2^3\cdot19\cdot317)=60$,&$1149$&$1104$&$45$\\
&$h(-11\cdot1093)=84,\ h(-4\cdot11\cdot1093)=84$,&&&\\
&$h(-46643)=45,\ h(-7\cdot6637)=60$,&&&\\
&$h(-4\cdot7\cdot13)=6,\ h(-7\cdot11^2\cdot13)=24$,&&&\\
&$h(-4\cdot7\cdot11^2\cdot13)=72,\ h(-2^3\cdot5471)=48$,&&&\\
&$h(-40387)=27,\ h(-7\cdot5717)=78$,&&&\\
&$h(-2^3\cdot7\cdot13)=12,\ h(-2^3\cdot7^3\cdot13)=84$,&&&\\
&$h(-8803)=9,\ h(-4\cdot8803)=27$,&&&\\
&$h(-29\cdot1031)=36,\ h(-29347)=27$,&&&\\
&$h(-73\cdot79)=36,\ (-4\cdot73\cdot79)=36$,&&&\\
&$h(-2^3\cdot2803)=126,\ h(-43\cdot353)=60$,&&&\\
&$h(-11\cdot13\cdot101)=12,\ h(-2^3\cdot19\cdot41)=12$,&&&\\
&$h(-7\cdot193)=24,\ (-4\cdot7\cdot193)=24$&&&\\\hline
\end{tabular}
\end{center}
\label{tab:2}
\end{table}\bigskip\pagebreak

\begin{table}[htp]
\caption{Checking $p = 11$ for $n=4$.}
\begin{center}
\begin{tabular}{|c|c|c|c|c|}
\hline
$p$&Discriminants&LHS&RHS\\
\hline
$11$&$h(-3\cdot241)=4,\ h(-3^3\cdot241)=12$,&$2904$&$2904$\\
&$h(-3^5\cdot241)=36,\ h(-2^3\cdot7\cdot29)=16$,&&\\
&$h(-2^5\cdot7\cdot29)=32,\ h(-2^3\cdot3^2\cdot7\cdot29)=64$,&&\\
&$h(-2^5\cdot3^2\cdot7\cdot29)=128,\ h(-4\cdot5\cdot23\cdot127)=80$,&&\\
&$h(-13\cdot17\cdot263)=32,\ h(-3\cdot5\cdot53\cdot73)=64$,&&\\
&$h(-4\cdot3\cdot5\cdot7\cdot137)=64,\ h(-4\cdot3\cdot13\cdot23)=16$,&&\\
&$h(-2^4\cdot3\cdot13\cdot23)=32,\ h(-2^6\cdot3\cdot13\cdot23)=64$,&&\\
&$h(-3\cdot5\cdot19\cdot199)=48,\ h(-7\cdot41\cdot197)=56$,&&\\
&$h(-2^3\cdot37\cdot47)=24,\ h(-2^5\cdot37\cdot47)=48$,&&\\
&$h(-4\cdot3\cdot31\cdot149)=64,\ h(-3\cdot59\cdot307)=88$,&&\\
&$h(-3\cdot7\cdot103)=8,\ h(-3\cdot5^2\cdot7\cdot103)=48$,&&\\
&$h(-4\cdot3\cdot53\cdot83)=48,\ h(-4\cdot5\cdot41)=8$,&&\\
&$h(-2^4\cdot5\cdot41)=16,\ h(-2^6\cdot5\cdot41)=32$,&&\\
&$h(-2^8\cdot5\cdot41)=64,\ h(-5\cdot7\cdot31\cdot47)=48$,&&\\
&$h(-17\cdot331)=28,\ h(-3^2\cdot17\cdot331)=56$,&&\\
&$h(-4\cdot3^2\cdot5\cdot17)=16$,&&\\
&$h(-2^4\cdot5\cdot17)=8,\ h(-2^4\cdot3^2\cdot5\cdot17)=32$,&&\\
&$h(-2^6\cdot5\cdot17)=16,\ h(-2^6\cdot3^2\cdot5\cdot17)=64$,&&\\
&$h(-4\cdot19\cdot71)=56,\ h(-4\cdot3^2\cdot19\cdot71)=112$,&&\\
&$h(-3\cdot7\cdot13\cdot19)=16,\ h(-3^3\cdot7\cdot13\cdot19)=48$,&&\\
&$h(-131\cdot353)=24,\ h(-4\cdot61\cdot181)=72$,&&\\
&$h(-2^3\cdot3\cdot5\cdot7\cdot13)=32$,&&\\
&$h(-2^5\cdot3\cdot5\cdot7\cdot13)=64,\ h(-3\cdot37\cdot373)=48$,&&\\
&$h(-3\cdot5\cdot109)=8,\ h(-3\cdot5^3\cdot109)=40$,&&\\
&$h(-2^9\cdot3)=16,\ h(-2^3\cdot3\cdot5^2)=8$,&&\\
&$h(-2^5\cdot3\cdot5^2)=16,\ h(-2^7\cdot3\cdot5^2)=32$,&&\\
&$h(-2^9\cdot3\cdot5^2)=64,\ h(-4\cdot193)=4$,&&\\
&$h(-4\cdot7^2\cdot193)=32,\ h(-5\cdot79\cdot89)=32$,&&\\
&$h(-3\cdot29\cdot397)=32,\ h(-4\cdot3\cdot7\cdot13\cdot29)=64$,&&\\
&$h(-2^3\cdot3\cdot17\cdot19)=32,\ h(-2^5\cdot3\cdot17\cdot19)=64$,&&\\
&$h(-3\cdot67\cdot139)=40,\ h(-5\cdot13\cdot419)=40$,&&\\
&$h(-2^3\cdot7\cdot107)=16,\ h(-2^5\cdot7\cdot107)=32$,&&\\
&$h(-4\cdot3\cdot5\cdot43)=16,\ h(-4\cdot3^3\cdot5\cdot43)=48$,&&\\
&$h(-5\cdot439)=16,\ h(-3^2\cdot5\cdot439)=32$,&&\\
&$h(-3^2\cdot43)=4,\ h(-7^2\cdot43)=8$,&&\\
&$h(-3^2\cdot7^2\cdot43)=32,\ h(-4\cdot17)=4$,&&\\
&$h(-4\cdot3^2\cdot17)=8,\ h(-4\cdot5^2\cdot17)=24$,&&\\
&$h(-4\cdot3^2\cdot5^2\cdot17)=48,\ h(-2^3\cdot113)=8$,&&\\
&$h(-2^5\cdot113)=16,\ h(-2^7\cdot113)=32$,&&\\
&$h(-23\cdot461)=12,\ h(-3\cdot7\cdot463)=24$,&&\\
&$h(-2^3\cdot3\cdot59)=16,\ h(-2^5\cdot3\cdot59)=32$,&&\\
&$h(-4\cdot3\cdot5\cdot79)=16$&&\\\hline
\end{tabular}
\end{center}
\label{tab:3}
\end{table}

Part c) of this Proposition relates to Hasse's results in \cite{hzer}, in which he determines the ramification for a prime divisor $p$ of $f$ in the extension 
$\Omega_f/\Sigma$, assuming $p$ is inert or ramified in $K = \mathbb{Q}(\sqrt{-d})$.  Here we have shown that the ramification index of $p$ in $\Omega_{pf}/\Omega_f$ is $e = p-1$ for the special fields $K = \mathbb{Q}(\zeta), \zeta = i$ or $\rho$, if $\left(\frac{d_K}{p}\right) = 1$ and $p \nmid 6f$; while in \cite{hzer}, the ramification index of $p$ in $\Omega_{pf}/\Omega_f$ (over $K$) is given as $e = p+1$ or $p$, according as $p$ is inert or ramified in $K$ (with a finite number of exceptional primes depending on $d$).  Thus, in the case of these two fields the ramification index is $e = p-\left(\frac{-d}{p}\right)$, for $p \nmid 6f$. \medskip

We finish by considering $p = 7$ again, but with $n = 3$. \medskip

\noindent {\bf Example ($p = 7$, cont'd).}  The calculation of the discriminants in $\mathfrak{D}_{3,7}$ proceeds as follows.
\begin{align*}
4 \cdot 7^3 &= 1^2 + 3 \cdot 457, \  &&h(-3 \cdot 457) = 12,\\
&= 6^2 + 2^3 \cdot 167, \  &&h(-2^3 \cdot 167) = 12,\\
&= 8^2 + 2^2 \cdot 3 \cdot 109, \  &&h(-3 \cdot 109) = 12, \ h(-4 \cdot 3 \cdot 109) = 12,\\
&= 13^2 + 3 \cdot 401, \  &&h(-3 \cdot 401) = 6,\\
&= 15^2 + 31 \cdot 37, \  &&h(-31 \cdot 37) = 6,\\
&= 20^2 + 2^2 \cdot 3^5, \ &&h(-3^5) = 3, \ h(-4 \cdot 3) = 1,\\
& &&h(-4 \cdot 3^3) = 3, \ h(-2^2 \cdot 3^5) = 9,\\
&= 22^2 + 2^3 \cdot 3 \cdot 37, \ &&h(-2^3 \cdot 3 \cdot 37) = 12,\\
&= 27^2 + 643, \ &&h(-643) = 3,\\
&= 29^2 + 3^2\cdot 59, \ &&h(-59) = 3, \ h(-3^2 \cdot 59) = 6,\\
&= 34^2 + 2^3 \cdot 3^3, \ &&h(-2^3 \cdot 3) = 2, \ h(-2^3 \cdot 3^3) = 6,\\
&= 36^2 + 2^2 \cdot 19, \ &&h(-19) = 1, \ h(-2^2 \cdot 19) = 3.
\end{align*}
Hence, the left-hand sum in \eqref{eqn:1.5} for $p = 7, n = 3$ seems to be
$$\sum_{-d \in \frak{D}_{3,7}}{h(-d)} = 5 \cdot 12 + 9 + 4\cdot 6 + 5 \cdot 3 + 2 + 2 \cdot 1 = 112 = \frac{1}{3}(7^3-7).$$
To verify this, we need to check the discriminants arising from the middle equation in this list, namely, $20^2 + 2^2 \cdot 3^5$, and the conductors $f = 3^2, 2, 6$, and $18$, for the field $K = \mathbb{Q}(\rho)$.  To start with, we have from \eqref{eqn:8.3} that
$$\frak{p}_7 = (\rho^2(2-\rho)) = (-3-2\rho),$$
so that $\frak{p}_7$ splits in $\Omega_2 = K$, which we already knew.  Hence, the inertial degree of $\frak{p}_7$ in $\textsf{K}_T$ is either $1$ or $w/2 = 3$.
There are two $6$-th degree primitive irreducible polynomials dividing $R_3(x)$ (defined for $F(x,y)$), whose roots have period three, which are
\begin{align*}
f_1(x) &= x^6 - 5x^5 + 24x^4 - 33x^3 + 14x^2 - x + 1, \ \textrm{disc} = -2^4 \cdot 3^3 \cdot 5^6 \cdot 7^4\\
f_2(x) &= x^6 - 11x^5 + 78x^4 - 111x^3 + 38x^2 + 5x + 1, \ \textrm{disc} = -2^{12} \cdot 3^6 \cdot 7^4 \cdot 19^3.
\end{align*}
The second polynomial clearly corresponds to the discriminant $-d = -19$, so we check to see that $f_1(x)$ corresponds to $-d = -12$.  It is not hard to check that 
$2^4$ exactly divides the discriminant of the field generated by a root of $f_1(x)$, and the only order $\mathcal{O} \subset K$ with $h(\mathcal{O}) = 1$ and even discriminant is the order of discriminant $-d = -12$.  Thus, for $p = 7, f = 2$, the field $\textsf{K}_T = \mathbb{Q}(\eta) \subset L_{\mathcal{O},7} = \Sigma_7 \Omega_{14}$ is generated by a root of $f_1(x)$.  Since the period is $3$, this is also the inertial degree of $\frak{p}_7$ in $\textsf{K}_T/K$.  Thus, $-d = -4\cdot 3 \in \frak{D}_{3,7}$. \medskip

The conductors $f = 3^2, 6$ give class numbers equal to $3$, and should correspond to $18$-th degree factors of $R_3(x)$.  Three of the five $18$-th degree factors have discriminants divisible by odd powers of $19, 59$, and $643$, respectively, so the other two polynomials are the ones we consider:
\begin{align*}
f_3(x) &= x^{18} - 159x^{17} + 14667x^{16} - 262520x^{15} + 1827192x^{14} - 5511762x^{13}\\
& \ + 6400998x^{12} + 2368908x^{11} - 10788351x^{10} + 1092123x^9 + 16448067x^8 \\
& \ - 18151032x^7 + 7304576x^6 - 347994x^5 - 455628x^4 + 48656x^3 + 12117x^2\\
& \ + 141x + 1, \ \ \textrm{disc} = -2^{60} \cdot 3^{21} \cdot 5^{42} \cdot 7^{48} \cdot 11^{12} \cdot 17^{12} \cdot 59^{12} \cdot 101^6;\\
f_4(x) &= x^{18} - 513x^{17} + 1205109x^{16} - 25000092x^{15} + 194961366x^{14}\\
 & \  - 758185938x^{13} + 1806243666x^{12} - 2986781760x^{11} + 3766865571x^{10}\\
 & \  - 3813769291x^9 + 3004800795x^8 - 1648049760x^7 + 506411418x^6\\
 & \ - 19912194x^5 - 35772714x^4 + 5787300x^3 + 1196541x^2 + 495x + 1,\\
 & \  \textrm{disc} = -2^{192} \cdot 3^{37} \cdot 5^{48} \cdot 7^{48} \cdot 17^{18} \cdot 47^{12} \cdot 59^6\cdot 131^6.
\end{align*}
We check the criterion from Theorem \ref{thm:5}(e) for $f = 3^2, 6$.  We have
\begin{equation}
\frak{p}_7^3 = ((2-\rho)^3) = (-1+18\rho) = (18+19\rho) = (19 + \rho).
\label{eqn:8.4}
\end{equation}
Since
$$-1+18\rho \equiv -55 \equiv 1 \ (\textrm{mod} \ \frak{p}_7')$$
and
$$(1-18\rho)^2 \equiv 1 \ (\textrm{mod} \ (3)\frak{p}_7'),$$
we see that $\frak{p}_7^3 \in H'$, the ideal group corresponding to $\Sigma'$.  Since $\frak{p}_7^3$ also lies in the ideal group $H_f \subset K$ for $\Omega_f$, this shows that the discriminants $-3^5, -4 \cdot 3^3$ lie in $\mathfrak{D}_{3,7}$, and because \eqref{eqn:8.3} shows these discriminants do not correspond to periodic points of period $1$. Thus, the polynomials $f_3(x), f_4(x)$ must correspond to these two discriminants; in fact, $f_3(x)$ corresponds to $-d = -2^2 \cdot 3^3$, and $f_4(x)$ corresponds to $-d = -3^5$.  The same argument works for $f = 18$, clearly, so that the discriminant $-d = -2^2 \cdot 3^5$ corresponds to the unique irreducible factor of $R_3(x)$ of degree $54$. We see that luck is with us in this calculation, since $3$ divides all but one of the conductors corresponding to the field $K = \mathbb{Q}(\rho)$.  \medskip

Considering \eqref{eqn:8.4}, it only remains to rule out the conductor $f = 19$ and the discriminant $-3 \cdot 19^2$.  However, $h(-3 \cdot 19^2) = 6$, and should correspond to a factor of degree $36$ of $R_3(x)$.  However, Theorem \ref{thm:6} and the above calculations show that the four factors of $R_3(x)$ of degree $36$ correspond to the discriminants $-d = -3 \cdot 401, -31 \cdot 37, -3^2 \cdot 59$, and $-2^3 \cdot 3^3$, none of which have the form $-3f^2$.  Thus $-d = -3 \cdot 19^2 \notin \mathfrak{D}_{3,7}$.  \medskip

In fact, we see that the 18 discriminants in $\mathfrak{D}_{3,7}$ correspond exactly to the $18$ primitive irreducible factors of $R_3(x)$.  These arguments show that \eqref{eqn:1.5} is true for $p = 7$ and $n = 3$. \medskip

\noindent {\it The case $p = 7, n  = 4$.} \medskip

A similar analysis applies to the case $p = 7$ and $n = 4$.  As we see in Table 4, there are $49$ discriminants for which there is a solution of
$$4 \cdot 7^4 = x^2+dy^2, \ \ x \equiv \pm 1 \ (\textrm{mod} \ 7),$$
with the exponent $4$ being minimal.  There are two discriminants with $d_K = -3$ requiring special attention, $-d_1 = -3\cdot13^2$ and $-d_2 = -3^3\cdot13^2$ 
(starred in the table).  For $-d_2$, the criterion in Theorem \ref{thm:5}(e) shows that $-d_2 \in \mathfrak{D}_{4,7}$.  For $-d_1 = -507$ we argue as follows. The class number $h(-507) = 4$, and the four reduced quadratic forms $ax^2+bxy+cy^2$ of discriminant $-507$ are
$$(a,b,c) = (1,1,127), \ (7,5,19) \ (\textrm{order} \ 4), \ (3,3,43) \ (\textrm{order} \ 2), \ (7, -5, 19) \ (\textrm{order} \ 4).$$
Using \cite[Thm. 7.7, pp. 123-124]{co}, the equivalence classes of these forms correspond to proper ideals in $\mathcal{O} = \textsf{R}_{-507}$ of norms $1,3,7,21$ given by
\begin{align*}
\frak{a}_1 &= \left(1, \frac{9+13\sqrt{-3}}{2}\right), \ \ \frak{a}_7 = \left(7,\frac{9+13\sqrt{-3}}{2}\right),\\
\frak{a}_3 &= \left(3, \frac{9+13\sqrt{-3}}{2}\right), \ \ \frak{a}_{21} = \left(21, \frac{1167+13\sqrt{-3}}{2}\right).
\end{align*}
For the basis quotients $w_k$ for each of these ideals, we compute approximations to the values $\tau = w_k$ of the modular function
$$z(\tau) = \left(\frac{\eta(\tau/7)}{\eta(\tau)}\right)^4+8,$$
where $\eta(\tau)$ is the Dedekind $\eta$-function.  These are conjugate values in $\Omega_{13} = K(\sqrt{13+4\sqrt{13}})$ for the field $K = \mathbb{Q}(\sqrt{-3})$, and their common
 minimal polynomial is
\begin{align*}
m_{507}(X) &= X^8 + 30460X^7 + 597336466X^6 - 31824635456X^5 + 1183007853019X^4\\
& \ \  - 23762042987840X^3 + 243673134180850X^2 - 1224982889982500X\\
& \ \  + 2411525969910625.
\end{align*}
(See \cite[pp. 358, 361]{m7}.  We will show in another paper that the results in \cite[Section 5]{m7} are true in for discriminants $-d$ for which $\left(\frac{-d}{7}\right) = +1$, not just for the discriminants $-d = -l, -4l$ considered in \cite{m7}.)  This is a normal polynomial over $\mathbb{Q}$, with discriminant
$$\textrm{disc}(m_{507}(X)) = 2^{84}3^{44}5^{16}7^{24}13^6 17^8 29^4 41^6 47^4 59^2 83^2 311^4 479^2.$$
By Theorem \ref{thm:6}, the value $h(w_1/7)$ generates the inertia field $\textsf{K}_T$ of $\frak{p}_7$ in $L_{\mathcal{O},7}$, and its minimal polynomial is the polynomial
$$P_{507}(x) = x^8(x-1)^8 m_{507}\left(\frac{x^3-x+1}{x(x-1)}\right),$$
whose degree is $24$.  Moreover, the roots of $P_{507}(x)$ are periodic points for the algebraic function defined by $F(x,y) = 0$, and the period can be computed by reducing $P_{507}(x)$ modulo $7$:
\begin{align*}
P_{507}(x) &\equiv (x + 2)^{12}(x^4 + 6x^3 + 5x^2 + 4x + 3)(x^4 + 4x^3 + 3x + 4)\\
& \ \ \times (x^4 + 4x^3 + 3x^2 + 5x + 3) \ (\textrm{mod} \ 7).
\end{align*}
This shows that the inertial degree of the prime divisors of $\frak{p}_7$ in $\textsf{K}_T$ is indeed $4$ and $-d_1 = -507 \in \mathfrak{D}_{4,7}$, as claimed. In this case, the prime divisors of $\frak{p}_7$ in $F_{\frak{p}_7'} = \Omega_{13}$ split completely in $\textsf{K}_T$.  (Note that $\Sigma_{\frak{p}_7'} = K$ in this case.) \medskip

Now Theorem \ref{thm:6}  shows that each of the discriminants $-d$ whose class numbers are shown in Table \ref{tab:4} corresponds to a primitive irreducible factor of degree $6h(-d)$ of the polynomial $R_4(x)$ (for $F(x,y)$), and their total degree is $6 \cdot 784 = 2(7^4-7^2)$.  There are $49$ such discriminants, and therefore at least $49$ primitive irreducible factors of $R_4(x)$.  In fact, these are all the primitive irreducible factors of $R_4(x)$, which follows from the fact that
\begin{align*}
&R_n(x) \equiv -(x^{7^n}-x)(x+2)^{7^n-1} \ (\textrm{mod} \ 7), \ n \ge 1,\\
&\textrm{deg}(R_4(x)/R_2(x)) = 2\cdot 7^4 - 2\cdot 7^2 = 6\cdot 784.
\end{align*}
This shows that the discriminants in Table \ref{tab:4} are all the discriminants in $\frak{D}_{4,7}$ and verifies that Conjecture 1 holds for $p = 7$ and $n = 4$. \medskip

\begin{table}[htp]
\caption{Checking $p = 7$ for $n=4$.}
\begin{center}
\begin{tabular}{|c|c|c|c|c|}
\hline
$p$&Discriminants&LHS&RHS\\
\hline
$7$&$h(-11\cdot 97)=12,\ h(-3^2\cdot 11 \cdot97)=24$,&$784$&$784$\\
&$h(-2^3\cdot13 \cdot 23)=8,\ h(-2^5\cdot13 \cdot 23)=16$,&&\\
&$h(-2^2\cdot5\cdot53)=8,\ h(-2^2\cdot 3^2\cdot5\cdot53)=32$,&&\\

&$h(-3 \cdot 5\cdot17\cdot37)=32,\ h(-83\cdot113)=24$,&&\\

&$h(-2^2\cdot3\cdot13\cdot59)=48$,&&\\

&$h(-2^3\cdot3\cdot5\cdot19)=16,\ h(-2^5\cdot3\cdot5\cdot19)=32$,&&\\

&$h(-5\cdot71)=4,\ h(-5^3\cdot71)=20$,&&\\

&$h(-3\cdot23\cdot127)=24,\ h(-2^4\cdot3\cdot11)=8$,&&\\

&$h(-2^6\cdot3\cdot11)=16,\ h(-2^8\cdot3\cdot11)=32$,&&\\

&$h(-2^2\cdot31\cdot67)=16$,&&\\

&$h(-3\cdot19\cdot139)=16,\ h(-3\cdot5\cdot11\cdot47)=16$,&&\\

&$h(-2^2\cdot73)=4,\ h(-2^2\cdot 5^2 \cdot73)=24$,&&\\

&$h(-3\cdot37)=8,\ h(-2^2\cdot3\cdot37)=8$,&&\\

&$h(-2^4\cdot3\cdot37)=16,\ h(-2^6\cdot3\cdot37)=32$,&&\\

&$h(-17 \cdot43)=12,\ h(-3^2\cdot17 \cdot43)=24$,&&\\

&$h(-5\cdot31\cdot41)=16$,&&\\

&$h(-2^7\cdot5)=8,\ h(-2^3\cdot3^2 \cdot 5)=8$,&&\\

&$h(-2^5\cdot3^2 \cdot 5)=16,\ h(-2^7\cdot3^2 \cdot 5)=32$,&&\\

&$h(-2^2\cdot 17)=4,\ h(-2^2\cdot3^2\cdot 17)=8$,&&\\

&$h(-2^2\cdot3^4\cdot 17)=24,\ h(-29\cdot167)=8$,&&\\

&{\bf **}$h(-3\cdot13^2)=4,\ h(-3^3\cdot13^2)=12$,&&\\

&$h(-2^2\cdot3\cdot11\cdot29)=16$,&&\\

&$h(-5\cdot11)=4,\ h(-2^2\cdot5\cdot11)=4$,&&\\

&$h(-2^4\cdot 5\cdot11)=8,\ h(-2^6\cdot5\cdot11)=16$,&&\\

&$h(-3\cdot5\cdot181)=8,\ h(-3\cdot13\cdot61)=16$,&&\\

&$h(-2^3\cdot47)=8,\ h(-2^5\cdot47)=16$,&&\\

&$h(-2^2\cdot3\cdot5\cdot19)=16$&&\\\hline
\end{tabular}
\end{center}
\label{tab:4}
\end{table}

\noindent Dept. of Mathematics, University of Missouri – Columbia \smallskip

\noindent 304 Math Sci Bldg, 810 Rollins St., Columbia, MO, 65211 \smallskip

\noindent email: akkarapakams@missouri.edu \bigskip

\noindent Dept. of Mathematical Sciences \smallskip

\noindent Indiana University at Indianapolis (IUI) \smallskip

\noindent LD 270, 402 N. Blackford St., Indianapolis, IN, 46202 \smallskip

\noindent e-mail: pmorton@iu.edu

\end{document}